\documentclass[a4paper, 11pt]{amsart}
\usepackage[utf8]{inputenc}
\usepackage[T1]{fontenc}
\usepackage[english]{babel}
\usepackage[margin=1in]{geometry} 
\usepackage{microtype, xcolor, graphicx}
\usepackage{amsmath, amssymb, amsthm}
\usepackage[linktocpage=true]{hyperref}
\usepackage[capitalise]{cleveref}
\usepackage{enumitem}
\usepackage{tikz-cd} 

\theoremstyle{plain}
\newtheorem{mainthm}{Theorem}

\newtheorem{thm}{Theorem}[section]
\newtheorem{lem}[thm]{Lemma}
\newtheorem{prop}[thm]{Proposition}
\newtheorem{cor}[thm]{Corollary}
\newtheorem*{prop*}{Proposition}

\theoremstyle{definition}
\newtheorem{defn}[thm]{Definition}

\newtheorem{exmp}[thm]{Example}

\theoremstyle{remark}
\newtheorem{rem}[thm]{Remark}

\numberwithin{equation}{section}

\DeclareMathOperator{\Mods}{Mod}
\DeclareMathOperator{\mods}{mod}
\DeclareMathOperator{\Hom}{Hom}
\DeclareMathOperator{\RHom}{\mathbf{R}Hom}
\DeclareMathOperator{\otimesL}{\otimes_R^\mathbf{L}}
\DeclareMathOperator{\Ext}{Ext}
\DeclareMathOperator{\Fun}{Fun}
\DeclareMathOperator{\id}{id}
\DeclareMathOperator{\Spec}{Spec}
\DeclareMathOperator{\Supp}{Supp}
\DeclareMathOperator{\supp}{supp}
\DeclareMathOperator{\coker}{coker}
\DeclareMathOperator{\im}{im}
\DeclareMathOperator{\cone}{cone}
\DeclareMathOperator{\colim}{colim}
\DeclareMathOperator{\hocolim}{\underrightarrow{\mathrm{hocolim}}}
\DeclareMathOperator{\rank}{rank}

\DeclareMathOperator{\Flat}{Flat}

\DeclareMathOperator{\ind}{ind}

\DeclareMathOperator{\add}{\mathsf{add}}

\DeclareMathOperator{\gen}{\mathsf{gen}}
\DeclareMathOperator{\wide}{\mathsf{wide}}
\DeclareMathOperator{\thick}{\mathsf{thick}} 
\DeclareMathOperator{\loc}{\mathsf{loc}}
\DeclareMathOperator{\susp}{\mathsf{susp}}
\DeclareMathOperator{\cosusp}{\mathsf{cosusp}}
\DeclareMathOperator{\Aisle}{Aisle}
\DeclareMathOperator{\aisle}{\mathsf{aisle}}
\newcommand{\cD}{\mathcal{D}} 
\newcommand{\cT}{\mathcal{T}}
\newcommand{\cK}{\mathcal{K}}
\newcommand{\cA}{\mathcal{A}} 

\newcommand{\cH}{\mathcal{H}}
\newcommand{\cS}{\mathcal{S}}
\newcommand{\cU}{\mathcal{U}} 
\newcommand{\cX}{\mathcal{X}}
\newcommand{\cV}{\mathcal{V}} 
\newcommand{\cW}{\mathcal{W}}
\newcommand{\cY}{\mathcal{Y}}

\renewcommand{\a}{\mathfrak{a}} 
\renewcommand{\b}{\mathfrak{b}}
\newcommand{\p}{\mathfrak{p}}
\newcommand{\q}{\mathfrak{q}}

\newcommand{\bN}{\mathbb{N}} 
\newcommand{\bZ}{\mathbb{Z}}
\newcommand{\bK}{\mathbb{K}} 
\newcommand{\kp}{\kappa(\p)}
\newcommand{\hs}{\mathbf{hs}}

\newcommand{\longmapsfrom}{\mathrel{\reflectbox{\ensuremath{\longmapsto}}}}
\newcommand{\leftangle}{\left\langle}
\newcommand{\rightangle}{\right\rangle}
\newcommand{\leftcurly}{\left\{}
\newcommand{\rightcurly}{\right\}}

\begin{document}
\title[Telescope conjecture for t-structures over noetherian path algebras]{Telescope conjecture for t-structures\\over noetherian path algebras}
\author{Enrico Sabatini}
\address{Enrico Sabatini, Dipartimento di Matematica ``Tullio Levi-Civita'', Universit{\`a} degli Studi di Padova, via Trieste 63, 35121 Padova, Italy}
\email{enrico.sabatini@phd.unipd.it}
\subjclass[2020]{Primary: 16E35, 16G20; Secondary: 16G30, 18G80.}
\keywords{Derived category, telescope conjecture, t-structure, wide subcategory, Dynkin quiver, noncrossing partitions.}
\thanks{The author was supported by the Department of Mathematics ``Tullio Levi-Civita'' of the University of Padova during his PhD, and by the BIRD - Budget Integrato per la Ricerca dei Dipartimenti 2022, within the project \textit{Representations of quivers with commutative coefficients}, for the participation at the Network meeting on the telescope conjecture.}
\begin{abstract}
    Let $RQ$ be the path algebra of a Dynkin quiver $Q$ over a commutative noetherian ring $R$. We show that any homotopically smashing t-structure in the derived category of $RQ$ is compactly generated. We also give a complete description of the compactly generated t-structures in terms of poset homomorphisms from the prime spectrum of the ring $\Spec(R)$ to the poset of filtrations of noncrossing partitions of the quiver $\mathrm{Filt}(\mathbf{Nc}(Q))$. In the case that $R$ is regular, we also get a complete description of the wide subcategories of the category $\mods(RQ)$.
\end{abstract}
\maketitle
\setcounter{tocdepth}{1}
\tableofcontents

\section*{Introduction}
Given a compactly generated triangulated category, the telescope conjecture asks whether every smashing subcategory is compactly generated. In many cases, affirmative answers to this question are accompanied by a full classification of the relevant subcategories.

In the setting of derived categories of rings, a landmark result is due to Neeman \cite{NeeTC}, who proved that for any commutative noetherian ring $R$, the smashing subcategories of $\cD(R)$ are in bijection with the specialization-closed subsets of $\Spec(R)$, and that the telescope conjecture holds in this context. Other positive results for derived categories of rings appear in \cite{DPTC, KSTC, SteTC}, a criterion for rings of weak global dimension at most one is given in \cite{BSTC}, while a well-known counterexample is provided in \cite{KelTC}. Many of these (counter)examples are elegantly unified under the general framework developed in \cite{HTC}.

In the context of representations of Dynkin quivers over commutative noetherian rings, the work of Antieau and Stevenson \cite{AS} extends Neeman’s result. They established a bijection between the lattice of smashing subcategories of $\cD(RQ)$ and the lattice of poset homomorphisms from $\Spec(R)$ to the lattice of noncrossing partitions $\mathbf{Nc}(Q)$, and proved that the telescope conjecture continues to hold in this broader setting.

More recently, a generalization of the telescope conjecture has been proposed in the context of t-structures that are not necessarily stable (i.e. arising from triangulated subcategories) \cite[Question A]{BHTC} and \cite[Question A.7]{HN}. In this setting, the conjecture asks whether every homotopically smashing t-structure (in the sense of \cite{SSV}) is compactly generated. For stable t-structures, this condition coincides with requiring the aisle to be smashing, thereby recovering the classical formulation.

In the derived category $\cD(R)$ of a commutative noetherian ring, Alonso, Jerem{\'i}as, and Saor{\'i}n \cite{AJS} provided a full classification of compactly generated t-structures in terms of filtrations (i.e. non increasing sequences) of specialization closed subsets of $\Spec(R)$. Subsequently, Hrbek and Nakamura \cite{HN} showed that the generalized telescope conjecture holds in this setting. Additional examples proving the conjecture for t-structures appear in \cite{AHTC, BHTC}.

In this paper, we extend \cite{AJS} and \cite{HN} to the derived category of representations of Dynkin quivers over commutative noetherian rings. We classify all compactly generated (aisles of) t-structures in $\cD(RQ)$ and prove that the telescope conjecture holds, unifying and generalizing the aforementioned results. Let $\mathrm{Filt}(\mathbf{Nc}(Q))$ be the lattice of filtrations of noncrossing partitions of $Q$, then our first main theorem is:

\begin{mainthm}[\cref{Main}]\label{ThmA}
    Let $R$ be a commutative noetherian ring and $Q$ a Dynkin quiver. Then:
    \begin{enumerate}
        \item There is an order-preserving bijection
        \[\begin{tikzcd} \Aisle_\mathrm{cg}(\cD(RQ)) \arrow[r, leftrightarrow] & \Hom_\mathrm{Pos}(\Spec(R),\mathrm{Filt}(\mathbf{Nc}(Q)))\end{tikzcd}\]
        \item Any homotopically smashing t-structure in $\cD(RQ)$ is compactly generated.
    \end{enumerate} 
\end{mainthm}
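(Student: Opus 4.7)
The plan is to combine the strategy of Antieau-Stevenson \cite{AS}, who classified smashing subcategories of $\cD(RQ)$ by $\Hom_\mathrm{Pos}(\Spec(R),\mathbf{Nc}(Q))$, with the filtration refinement used by Alonso-Jerem{\'i}as-Saor{\'i}n \cite{AJS} in classifying compactly generated aisles of $\cD(R)$. The expectation is that a compactly generated aisle in $\cD(RQ)$ is determined, via derived base change along residue fields, by a compactly generated aisle in $\cD(\kp Q)$ for each prime $\p$, compatibly with specialization, and that each such fiber corresponds to an element of $\mathrm{Filt}(\mathbf{Nc}(Q))$. Both parts of the theorem then fall out of a single classification, with part (2) following by showing that the same datum governs homotopically smashing t-structures as well.

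The first major step is the field case: show $\Aisle_\mathrm{cg}(\cD(kQ)) \cong \mathrm{Filt}(\mathbf{Nc}(Q))$ for $k$ a field. Since $kQ$ is finite-dimensional and hereditary, compactly generated aisles should be encoded by decreasing filtrations of wide subcategories of $\mods(kQ)$, which via the Ingalls-Thomas bijection correspond to $\mathbf{Nc}(Q)$. Concretely, one associates to $\cU$ the filtration $n \mapsto W_n$, where $W_n$ is the wide subcategory generated by the $n$-th cohomologies of compact generators of $\cU$; the inverse builds an aisle as the smallest one containing appropriate shifts of compact objects from each $W_n$. Next, globalize by sending $\cU \subseteq \cD(RQ)$ to the map $f_\cU : \p \mapsto \cU \otimesL \kp$ composed with the field-case bijection. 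Order-preservation follows from the compatibility of derived base change with specialization, as in \cite{AS}. The inverse requires a gluing argument: given $f$, build an aisle generated by Koszul-type compact objects associated with $f(\p)$ at each prime, and verify it recovers $f$ under base change.

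For part (2), the strategy is to prove a residue-field criterion in the style of Hrbek-Nakamura \cite{HN}: a t-structure in $\cD(RQ)$ is homotopically smashing if and only if each of its base changes to $\cD(\kp Q)$ is homotopically smashing. Combining this with a field-case result (homotopically smashing implies compactly generated in $\cD(kQ)$) and the gluing from part (1), any homotopically smashing t-structure must coincide with one appearing in the bijection, hence is compactly generated. The main obstacle I foresee is the field case: showing every compactly generated aisle of $\cD(kQ)$ is built from a filtration of wide subcategories requires ruling out exotic aisles and leveraging the hereditary structure effectively, to ensure that the assignment $\cU \mapsto (W_n)$ is both well defined and injective. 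A secondary hurdle is the descent/gluing argument for homotopically smashing t-structures, which demands a careful analysis of derived base change and homotopy colimits across primes parallel to the techniques of \cite{HN}.
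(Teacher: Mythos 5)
Your outline for part (1) is broadly aligned with the paper: the field case via filtrations of wide subcategories / noncrossing partitions is exactly what the paper establishes in the Appendix (\cref{FiltWide} and \cref{FiltNc}), and the projection/gluing pair mirrors the assignments $\varphi$ and $\psi$ of \eqref{AisleAss}. One difference in spirit is that the paper's $\varphi$ does not base change the aisle $\cU$ itself, but rather records which indecomposable $\bK Q$-complexes $L$ have $R/\p \otimesL \widetilde{L} \in \cU$, using the lattice lifts $\widetilde{L}$ of \cref{LatLift}; this concreteness is what makes the orthogonality comparisons (\cref{OrthCor}, \cref{VarOrth}) available and the injectivity $\varphi\circ\psi=\id$ (\cref{InjAss}) provable. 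You also leave surjectivity of the gluing map implicit (``verify it recovers $f$ under base change'' only addresses $\varphi\circ\psi=\id$, not $\psi\circ\varphi=\id$); the paper handles surjectivity jointly with part (2) rather than directly.

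The genuine gap is in part (2). You propose a ``residue-field criterion'' (homotopically smashing iff each base change to $\cD(\kp Q)$ is homotopically smashing) plus the field-case telescope conjecture, followed by a gluing argument. The ``only if'' direction is not a problem, but this criterion as stated is not what drives the paper, and the ``if'' direction together with the gluing step is precisely where the hard technical content lives and where you have no concrete plan. The paper instead proves a cogeneration theorem (\cref{Cogen}): any homotopically smashing coaisle is cogenerated, as a complete homotopically smashing cosuspended subcategory, by the objects $E(R/\p)\otimesL\widetilde{L}$ it contains. This rests on two substantial results of independent interest that you do not identify at all: a local-to-global principle for cosuspended subcategories closed under directed homotopy colimits (\cref{LTG}), and a minimality-of-stalk-subcategories result for the same class (\cref{Min}), both proved in Section 2. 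Crucially, these require working on the coaisle side, because coaisles -- unlike aisles of non-stable t-structures -- are closed under tensoring by bounded complexes of flat modules concentrated in nonnegative degrees (\cref{ClosureLem}(3), \cref{ClosureProp}(3)); the paper explicitly notes after \cref{LTG} that this is what forces the switch. Once the cogeneration theorem is in hand, the injectivity of the coaisle-side assignment $\omega$ together with \cref{Esigma} closes both surjectivity of $\psi$ and the telescope conjecture simultaneously. Without an analogue of these two technical pillars, your plan for part (2) does not go through; the ``secondary hurdle'' you flag is in fact the main content.
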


\noindent The bijection in (1) is obtained by identifying $\mathrm{Filt}(\mathbf{Nc}(Q))$ with the lattice of compactly generated aisles in $\cD(\bK Q)$, where $\bK$ is an arbitrary field (\cref{FiltNc}), and interpreting poset homomorphisms as order-preserving maps $\sigma$ for which $\sigma(\p)\in\Aisle(\cD(\kp Q))$ for each $\p\in\Spec(R)$. In this framework, the two assignments are realized by ``projecting'' a compactly generated aisle of $\cD(RQ)$ to $\cD(\kp Q)$ and by ``gluing'' along $\Spec(R)$ the aisles in the image of a homomorphism.

To fully understand the classified t-structures, we also characterize the homotopically smashing coaisles. This leads to the observation that compactly generated aisles are determined on cohomology (\cref{DetCoh}). Applying a result of Zhang and Cai \cite{ZC}, we then obtain a complete classification of wide subcategories of $\mods(RQ)$, generalizing both Takahashi’s classification over commutative noetherian rings \cite{Tak} and the work of Ingalls and Thomas over Dynkin algebras \cite{IT}. This also completes the picture outlined in \cite{IK}, where similar results are obtained for torsion classes and Serre subcategories of $\mods(RQ)$. Our second main result is:

\begin{mainthm}[\cref{Wide}]\label{ThmB}
    Let $R$ be a commutative noetherian regular ring and $Q$ a Dynkin quiver. Then there is an order-preserving bijection:
    \[\begin{tikzcd} \mathrm{Wide}(RQ) \arrow[r, leftrightarrow] & \Hom_\mathrm{Pos}(\Spec(R),\mathbf{Nc}(Q)) \end{tikzcd}\]
\end{mainthm}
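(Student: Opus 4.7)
The plan is to combine Theorem~\ref{ThmA}(1) with the Zhang--Cai criterion from \cite{ZC}: roughly, Zhang--Cai relates wide subcategories of $\mods(RQ)$ to a distinguished class of compactly generated aisles in $\cD(RQ)$, while Theorem~\ref{ThmA} classifies such aisles by poset homomorphisms into $\mathrm{Filt}(\mathbf{Nc}(Q))$. The core task will be to identify the Zhang--Cai class as exactly those homomorphisms factoring through the embedding $\mathbf{Nc}(Q) \hookrightarrow \mathrm{Filt}(\mathbf{Nc}(Q))$ as constant filtrations.

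First, I would use the Zhang--Cai result to attach to each wide subcategory $\cW \subseteq \mods(RQ)$ a compactly generated aisle $\cU_\cW \subseteq \cD(RQ)$ from which $\cW$ can be recovered; the observation \cref{DetCoh} that compactly generated aisles are determined on cohomology ensures that the assignment $\cW \mapsto \cU_\cW$ is injective. Here the regularity of $R$ is used essentially: it guarantees that every finitely generated $RQ$-module has finite projective dimension, so that $\cW$ is generated in $\cD(RQ)$ by compact objects, making the Zhang--Cai assignment produce a legitimate compactly generated aisle.

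Next, I would push $\cU_\cW$ through Theorem~\ref{ThmA}(1) to obtain a poset homomorphism $\sigma_\cW \colon \Spec(R) \to \mathrm{Filt}(\mathbf{Nc}(Q))$. Since the bijection of Theorem~\ref{ThmA} is realized pointwise via projection to $\cD(\kp Q)$, the condition of coming from a wide subcategory should reduce to a prime-by-prime condition: for each $\p \in \Spec(R)$, the aisle $\sigma_\cW(\p) \in \Aisle(\cD(\kp Q))$ must itself come from a wide subcategory of $\mods(\kp Q)$. By the Ingalls--Thomas classification \cite{IT}, wide subcategories of $\mods(\kp Q)$ correspond to elements of $\mathbf{Nc}(Q)$, and a direct comparison should show that these are exactly the constant filtrations inside $\mathrm{Filt}(\mathbf{Nc}(Q))$. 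Conversely, given any $\sigma \colon \Spec(R) \to \mathbf{Nc}(Q)$, composition with the constant-filtration embedding yields via Theorem~\ref{ThmA} a compactly generated aisle whose cohomology recovers a wide subcategory.

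The main obstacle will be showing that the Zhang--Cai property of being ``wide-type'' is detected prime-by-prime along the projection to the residue fibers $\cD(\kp Q)$. This compatibility, combined with the regularity hypothesis ruling out shift-type filtrations pointwise, should suffice to identify wide subcategories with homomorphisms landing in $\mathbf{Nc}(Q)$. Once pointwise detection is in place, order preservation is immediate from the corresponding properties in Theorem~\ref{ThmA} and \cite{IT}, and the bijection follows.
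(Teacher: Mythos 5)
Your strategy — combine the Zhang--Cai correspondence with the t-structure classification and reduce to Ingalls--Thomas at residue fields — is the same as the paper's, but the paper's route is more direct and two of your details need correcting. First, once regularity gives $\cD^c(RQ)=\cD^b(\mods(RQ))$ and \cref{DetCoh} supplies the ``cohomology-determined'' hypothesis, Zhang--Cai \cite[Theorem 2.5]{ZC} produces a genuine bijection $\mathrm{Wide}(RQ)\leftrightarrow\mathrm{Thick}(\cD^c(RQ))$; \cref{DetCoh} is an input to that theorem, not merely a device for showing injectivity of the forward map. Second, the paper never needs to isolate ``wide-type'' aisles inside the full classification of Theorem~\ref{ThmA}: by the stable telescope conjecture, thick subcategories of compacts correspond to compactly generated localizing subcategories, i.e., \emph{stable} aisles, and a stable aisle is closed under all shifts, so its image under $\varphi$ at each prime is automatically a constant filtration, landing in $\Hom_\mathrm{Pos}(\Spec(R),\mathbf{Nc}(Q))\subseteq\Hom_\mathrm{Pos}(\Spec(R),\mathrm{Filt}(\mathbf{Nc}(Q)))$. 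Your ``main obstacle'' therefore dissolves — being a constant filtration pointwise is a built-in feature of the Zhang--Cai output, not a property to be detected afterwards. Finally, regularity does not ``rule out shift-type filtrations pointwise''; it plays no role in selecting the constant filtrations and is used only to ensure $\cD^c(RQ)=\cD^b(\mods(RQ))$ so that Zhang--Cai's framework applies. With these corrections your outline matches the paper's proof.
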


\noindent Once again, the poset of noncrossing partitions $\mathbf{Nc}(Q)$ is interpreted as the lattice of wide subcategories of $\mods(\bK Q)$, independent of the choice of field, and the assignments correspond to suitable ``projection'' and ``gluing'' operations.

\subsection*{Structure of the paper}
The paper is divided into four sections and an appendix. In \textbf{\cref{Prelim}}, we recall basic and preliminary results on representations of small categories $C$ over commutative rings $R$, focusing in particular on the action of $\cD(R)$ on $\cD(RC)$, the effect of this action on t-structures, and the support theory of $\cD(RC)$. In \textbf{\cref{LTGM}}, we investigate two key properties -- the \textit{local to global principle} and the \textit{minimality of stalk subcategories} -- which are already known in the context of stable t-structures, and we prove them for general t-structures in $\cD(RC)$. In \textbf{\cref{DynLift}}, we specialize to representations of Dynkin quivers and lay the groundwork for the classification results, studying compactly generated aisles, homotopically smashing coaisles, and the relation between them. In \textbf{\cref{MainRes}}, we apply the preceding results to prove \cref{ThmA}, and devote the final part of the section to the study of wide subcategories and the proof of \cref{ThmB}. Finally, in \textbf{Appendix} \ref{App}, we present a complete proof that the t-structures of a Dynkin algebras (i.e. a finite-dimensional hereditary algebra of finite representation type) are parametrized by filtrations of noncrossing partitions of the quiver (\cref{FiltNc}), and are therefore independent of the field; while this result may be familiar to specialists, to our knowledge it has not been explicitly recorded in the literature.

\subsection*{Notation and conventions}
In a category $\cT$, we will often consider the smallest subcategory containing a class of objects $\cX$ of $\cT$ and closed under certain operations. We denote such subcategories by Sans-serif letters and angle brackets; for example, when $\cT$ is triangulated with coproducts, we write $\loc_\cT\langle\cX\rangle$ for the smallest triangulated subcategory of $\cT$ containing $\cX$ and closed under coproducts (i.e. the smallest localizing subcategory of $\cT$ containing $\cX$). If instead we consider the poset of all subcategories of a certain type in $\cT$, ordered by inclusion, we denote it by Roman letters and round brackets; in the previous example, $\mathrm{Loc}(\cT)$ denotes the lattice of localizing subcategories of $\cT$.

We also consider subcategories defined by orthogonality conditions. For two classes of objects $\cX$ and $\cY$ in $\cT$, we write:
\[\Hom_\cT(\cX,\cY)=\{f\in\Hom_\cT(X,Y)\mid X\in\cX,\,Y\in\cY\}\]
\[\cX^\perp=\{Y\in\cT\mid\Hom_\cT(\cX,Y)=0\} \text{ and } {}^\perp\cY=\{X\in\cT\mid\Hom_\cT(X,\cY)=0\}\]

For a derived category $\cD(\cA)$ of an abelian category $\cA$, a class of objects $\cS$ in $\cA$, and a set of integers $I\subseteq\bZ$, we write $\cD^I(\cS)$ for the full subcategory of $\cD(\cA)$ consisting of complexes $X$ such that $X^i$ belongs to $\cS$ if $i\in I$, and is zero otherwise. We write $\cD^{\geq0}(\cS)$ for $\cD^{\{i\geq0\}}(\cS)$, and similarly for other intervals.

\subsection*{Acknowledgements}
The author would like to express his immense gratitude to his supervisors Jorge Vit{\'o}ria and Michal Hrbek. In particular, to the former for being a patient and helpful guide, both on a human and mathematical level, fundamental to the realization of this paper; to the latter for his hospitality during the author's visit to the Czech Academy of Sciences and for pointing out useful arguments concerning the proofs of \cref{Min} and \cref{CohDetThm}; to both for carefully reading a preliminary version of this paper and for providing useful feedback. The author would also like to thank Greg Stevenson and {\'A}lvaro S{\'a}nchez Campillo for several discussions on the topics of the paper.

\section{Preliminaries}\label{Prelim}
\subsection{Representations of small categories}
Given a small category $C$ and a ring $R$, the category of left $C$-modules over $R$ is the category of functors $\Mods_R(C)=\Fun(C,\Mods(R))$. An equivalent description, as the category of $R$-linear functors $\Fun_R(RC,\Mods(R))$ is given in \cite[Lemma 2.7]{AS}, where $RC$ is the $R$-linearization of the small category $C$, i.e. the category with the same objects as $C$ and with hom-set $\Hom_{RC}(c,c')$ equal to the free $R$-module with basis $\Hom_C(c,c')$, for any two objects $c,c'\in C$.

\begin{lem}[{\cite[III 4.2, 4.6, 5.2]{Pop}}]\label{GroCat}
    The functor category $\Mods_R(RC)$ is a Grothendieck abelian category with exact products and a set of small projective generators. In particular, it has also exact coproducts, an injective cogenerator and a projective generator.
\end{lem}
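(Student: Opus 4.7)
The plan is to verify each claim essentially objectwise, reducing everything to the corresponding properties of $\Mods(R)$ and the Yoneda lemma. The core observation is that limits, colimits, kernels and cokernels in the functor category $\Fun_R(RC,\Mods(R))$ are all computed pointwise, since the hom-functor $\Fun_R(RC,-)$ out of any fixed object $c\in RC$ is naturally evaluation at $c$. From this it follows immediately that $\Mods_R(RC)$ is abelian, and that exactness (of finite limits, finite colimits, coproducts, products, or filtered colimits) in $\Mods_R(RC)$ is controlled by the corresponding exactness in $\Mods(R)$. In particular, since $\Mods(R)$ has exact coproducts, exact products, and exact filtered colimits, the same holds in $\Mods_R(RC)$.

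The heart of the argument is producing the set of small projective generators. First I would set, for each object $c\in C$, the representable $R$-linear functor
\[P_c=\Hom_{RC}(c,-)\colon RC\longrightarrow\Mods(R).\]
By the $R$-linear Yoneda lemma, for any $F\in\Mods_R(RC)$ there is a natural isomorphism $\Hom(P_c,F)\cong F(c)$. Since the functor $F\mapsto F(c)$ is exact (as colimits/limits and in particular kernels/cokernels are computed pointwise) and commutes with arbitrary coproducts (again, pointwise computation), each $P_c$ is both projective and compact (small). The family $\{P_c\}_{c\in\mathrm{Ob}(C)}$ generates: if $F\neq 0$ then $F(c)\neq 0$ for some object $c$, and hence $\Hom(P_c,F)\neq 0$. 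Taking the coproduct $P=\bigoplus_{c\in\mathrm{Ob}(C)}P_c$ then yields a single projective generator.

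Putting these pieces together, $\Mods_R(RC)$ is Grothendieck abelian (it is abelian, cocomplete, has a generator, and filtered colimits are exact) with the additional features of exact products and a set of small projective generators. The ``in particular'' clause is now automatic: a Grothendieck abelian category always admits an injective cogenerator (for example via Baer's argument applied with respect to the generator), and the projective generator is already produced as $\bigoplus_c P_c$.

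The only genuinely delicate point is the enriched Yoneda statement $\Hom(P_c,F)\cong F(c)$ for $R$-linear functors out of $RC$; once this is in hand, projectivity, smallness, generation, and all the other claims reduce to pointwise properties of $\Mods(R)$. So the ``main obstacle'' is really just a careful book-keeping of the $R$-enriched Yoneda isomorphism, which is precisely what is recorded in the cited sections of \cite{Pop}.
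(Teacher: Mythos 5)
Your proof is correct, and since the paper simply cites Popescu without reproducing an argument, you are essentially supplying the standard proof that the cited sections of \cite{Pop} record: pointwise computation of (co)limits in the functor category, plus the enriched Yoneda lemma identifying $\Hom(P_c,-)$ with evaluation at $c$, yields the abelian structure, the exactness properties (inherited from $\Mods(R)$, which is AB3, AB4, AB4* and AB5), and the set $\{P_c\}$ of small projective generators; the injective cogenerator then comes for free from the Grothendieck property.

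One small stylistic remark: your sentence asserting that (co)limits are computed pointwise ``since the hom-functor $\Fun_R(RC,-)$ out of any fixed object $c\in RC$ is naturally evaluation at $c$'' inverts the usual logic and is a bit garbled as written. Pointwise computation of (co)limits in $\Fun_R(RC,\Mods(R))$ is a direct verification from the universal property (it needs only that $\Mods(R)$ has the relevant (co)limits), and it is a prerequisite for --- not a consequence of --- the observation that evaluation at $c$ is exact and commutes with coproducts. The enriched Yoneda isomorphism $\Hom(P_c,F)\cong F(c)$ is then the separate ingredient that lets you transfer these properties of evaluation to the representables $P_c$, giving projectivity and smallness. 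With that causal arrow straightened out, the argument is clean and complete.
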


As for any Grothendieck abelian category, we can construct the homotopy category $\cK(RC)$ and the derived category $\cD(RC)$. Recall that a triangulated category with coproducts $\cT$ is said to be \textit{compactly generated} if the subcategory $\cT^c$ formed by its \textit{compact objects}, i.e. the objects $X\in\cT$ such that $\Hom_\cT(X,\_)$ commutes with coproducts, is skeletally small and $\loc(\cT^c)=\cT$. 

\begin{rem}\label{DerCat}
    The derived category $\cD(RC)$ is compactly generated. Indeed, the set $\mathcal{P}$ of all shifts of the small projective generators of $\Mods_R(RC)$ is a set of compact objects in $\cD(RC)$ such that $\mathcal{P}^\perp=0$. In particular, $\loc\langle\cD^c(RC)\rangle=\cD(RC)$ and $\thick\langle\mathcal{P}\rangle=\cD^c(RC)$.
\end{rem}

For an abelian category $\cA$, we will call a complex $I\in\cK(\cA)$ \textit{K-injective} if $\Hom_{\cK(\cA)}(A,I)=0$ for every acyclic complex $A\in\cK(\cA)$; we define dually \textit{K-projective} complexes.

\begin{lem}\label{Res}
    The homotopy category $\cK(RC)$ admits K-injective and K-projective resolutions, i.e. for any complex $X\in\cK(RC)$ there are quasi-isomorphisms $X\to\mathbf{i}X$ and $\mathbf{p}X\to X$, where $\mathbf{i}X$ and $\mathbf{p}X$ are a K-injective and a K-projective complex respectively. Moreover, any quasi-isomorphism between K-injective (resp. K-projective) complexes is an isomorphism in $\cK(RC)$.
\begin{proof}
    See, for example, \cite[Propositions 4.3.1, 4.3.4]{Kra} and their duals; note that they both apply by \cref{GroCat}.
\end{proof}
\end{lem}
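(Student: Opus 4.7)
The plan is to verify that Lemma~\ref{GroCat} provides enough structural hypotheses on $\Mods_R(RC)$ to invoke standard Spaltenstein/Krause-type existence results for resolutions, and then handle uniqueness by a short direct argument.

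First, for K-injective resolutions, I would use only that $\Mods_R(RC)$ is a Grothendieck abelian category, which is the content of Lemma~\ref{GroCat}. In this generality, Spaltenstein's transfinite construction (as formulated in \cite[Proposition 4.3.1]{Kra}, or alternatively via Bousfield localization of $\cK(RC)$ at the subcategory of acyclic complexes) produces for every complex $X$ a quasi-isomorphism $X\to\mathbf{i}X$ with $\mathbf{i}X$ K-injective. The Grothendieck hypothesis is essential: one needs exact filtered colimits to glue together the termwise injective resolutions built stage by stage. Since no further properties are required, nothing needs to be checked beyond what Lemma~\ref{GroCat} gives.

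For K-projective resolutions, the Grothendieck condition alone is not enough, and this is where I would use the additional structure supplied by Lemma~\ref{GroCat}, namely the existence of a set of small projective generators. One approach is to dualize the K-injective construction inside $\cK(RC)$: using the generating set of small projectives, show that the class of K-projective complexes is coreflective in $\cK(RC)$, producing for every $X$ a quasi-isomorphism $\mathbf{p}X\to X$ with $\mathbf{p}X$ K-projective (this is precisely \cite[Proposition 4.3.4]{Kra}, whose hypotheses are matched by Lemma~\ref{GroCat}). Alternatively one can give a direct Spaltenstein-style construction using termwise projective covers built from the small projective generators. I expect this to be the main obstacle at the level of citation: verifying that the ``small projective generators'' of Lemma~\ref{GroCat} are precisely what Krause's proposition asks for, and that exact products (also asserted in Lemma~\ref{GroCat}) can be invoked if the dual construction is used.

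Finally, for the uniqueness statement, I would argue directly. Let $f\colon I\to I'$ be a quasi-isomorphism between K-injective complexes and form the triangle $I\xrightarrow{f} I'\to\cone(f)\to I[1]$ in $\cK(RC)$. Then $\cone(f)$ is acyclic. Since $I'$ is K-injective, $\Hom_{\cK(RC)}(\cone(f),I')=0$, so the map $I'\to\cone(f)$ is zero and the triangle splits; dually, since $I$ is K-injective, $\Hom_{\cK(RC)}(\cone(f)[-1],I)=0$, showing that $f$ admits a two-sided inverse in $\cK(RC)$ and is therefore an isomorphism there. The K-projective case is formally dual, replacing $\Hom_{\cK(RC)}(\cone(f),I')$ by $\Hom_{\cK(RC)}(P,\cone(f))$. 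This final step is routine and does not require further properties of $\Mods_R(RC)$.
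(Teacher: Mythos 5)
Your approach matches the paper's: both invoke Krause's resolution theorems and observe that Lemma~\ref{GroCat} supplies all the hypotheses (Grothendieck for the K-injective side, and the set of small projective generators together with exactness properties for the dual, K-projective side). The only place where your write-up departs from the paper is the self-contained uniqueness argument, which the paper leaves entirely to the cited reference.

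That uniqueness argument contains a small but genuine logical slip. From the triangle $I\xrightarrow{f}I'\xrightarrow{g}\cone(f)\xrightarrow{h}I[1]$, the orthogonality $\Hom_{\cK(RC)}(\cone(f),I')=0$ says nothing about the morphism $g\colon I'\to\cone(f)$, which lives in $\Hom_{\cK(RC)}(I',\cone(f))$, i.e.\ in the opposite $\Hom$-group; so ``the map $I'\to\cone(f)$ is zero and the triangle splits'' does not follow from what you wrote. The correct assembly of the two facts you list is the other way around: since $I[1]$ is K-injective and $\cone(f)$ acyclic, $h\in\Hom_{\cK(RC)}(\cone(f),I[1])=0$, so the triangle splits and $I'\cong I\oplus\cone(f)$. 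Then the inclusion $\cone(f)\hookrightarrow I'$ lies in $\Hom_{\cK(RC)}(\cone(f),I')=0$, forcing $\mathrm{id}_{\cone(f)}=0$ in $\cK(RC)$, i.e.\ $\cone(f)\cong 0$, whence $f$ is an isomorphism. You have both ingredients; only the deduction drawn from the first one is wrong. A last cosmetic remark: for the K-projective construction what one really uses, beyond the projective generators, is exactness of (filtered) colimits rather than exact products, since the homotopy colimit of the bounded-above truncations is what furnishes $\mathbf{p}X$; exact products are the hypothesis for the K-injective side if one wants to produce the resolution as a homotopy limit.
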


\begin{rem}[{\cite[Remark 4.3.5]{Kra}}]\label{KInj}
    The subcategory of K-injective (resp. K-projective) complexes is the smallest triangulated subcategory of $\cK(RC)$ closed under products (resp. coproducts) and containing stalk complexes of injective (resp. projective) objects. In particular, $\mathbf{i}X$ and $\mathbf{p}X$ can always be chosen degreewise injective and degreewise projective respectively.
\end{rem}

\begin{prop}\label{FunProp}
    For any small category $C$ and rings $R$ and $S$, the following holds:
    \begin{enumerate}
        \item Any additive functor $F:\Mods(R)\to\Mods(S)$
        \begin{enumerate}
            \item Extends to an additive functor $\bar{F}:\Mods_R(RC)\to\Mods_S(SC)$;
            \item Admits a right and a left derived functor $\mathbf{R}\bar{F},\mathbf{L}\bar{F}:\cD(RC)\to\cD(SC)$.
        \end{enumerate}
        \item Any adjoint pair $F\dashv G:\Mods(R)\to\Mods(S)$
        \begin{enumerate}
            \item Extends to an adjoint pair $\bar{F}\dashv\bar{G}:\Mods_R(RC)\to\Mods_S(SC)$;
            \item Extends to an adjoint pair $\mathbf{L}\bar{F}\dashv\mathbf{R}\bar{G}:\cD(RC)\to\cD(SC)$.
        \end{enumerate}
    \end{enumerate}
\begin{proof}
    (1) For any $X,X'\in\Mods_R(RC)$ and any natural transformation $f:X\to X'$, define $\bar{F}X:=F\circ X$ and $\bar{F}f:\bar{F}X\to\bar{F}X'$ such that $(\bar{F}f)_c:=F(f_c)$ for any $c\in C$. Then (b) follows by \cref{Res} and \cite[Theorem 8.4.9, Theorem 8.4.23]{Yek}.
    
    \noindent (2) Let $\eta:1_{\Mods(R)}\to GF$ and $\varepsilon:FG\to 1_{\Mods(S)}$ be the unit and the counit of the adjunction, i.e. natural transformations satisfying, for any $M\in\Mods(R)$ and $N\in\Mods(S)$, the triangle identities
    \[\varepsilon_{F(M)}\circ F(\eta_M)=\id_{F(M)} \text{ and } G(\varepsilon_N)\circ\eta_{G(N)}=\id_{G(N)}\]  
    Define the natural transformations $\bar{\eta}:1_{\Mods_R(RC)}\to \bar{G}\bar{F}$ and $\bar{\varepsilon}:\bar{F}\bar{G}\to 1_{\Mods_S(SC)}$ such that, for any $X\in\Mods_R(RC)$ and $Y\in\Mods_S(SC)$, ${(\bar{\eta}_X)}_c:=\eta_{X(c)}$ and ${(\bar{\varepsilon}_Y)}_c:=\varepsilon_{Y(c)}$ for any $c\in C$. It is straightforward to check that $\bar{\eta}$ and $\bar{\varepsilon}$ are well defined, i.e. for any $\alpha:c\to d$  the diagrams
    \[\begin{tikzcd} X(c)\arrow[r,"{(\bar{\eta}_X)}_c"]\arrow[d,"X(\alpha)"] & \bar{G}\bar{F}X(c)\arrow[d,"\bar{G}\bar{F}X(\alpha)"] \\ X(d)\arrow[r,"{(\bar{\eta}_X)}_d"] & \bar{G}\bar{F}X(d) \end{tikzcd} \text{ and } \begin{tikzcd} \bar{G}\bar{F}Y(c)\arrow[r,"{(\bar{\varepsilon}_Y)}_c"]\arrow[d,"\bar{G}\bar{F}Y(\alpha)"] & Y(c)\arrow[d,"Y(\alpha)"] \\ \bar{G}\bar{F}Y(d)\arrow[r,"{(\bar{\varepsilon}_Y)}_d"] & Y(d) \end{tikzcd}\]
    commute and for any $f:X\to X'$ and $g:Y\to Y'$ the diagrams
    \[\begin{tikzcd} X\arrow[r,"\bar{\eta}_X"]\arrow[d,"f"] & \bar{G}\bar{F}X\arrow[d,"\bar{G}\bar{F}f"]\\X'\arrow[r,"\bar{\eta}_{X'}"] & \bar{G}\bar{F}X' \end{tikzcd} \text{ and } \begin{tikzcd} \bar{F}\bar{G}Y\arrow[r,"\bar{\varepsilon}_Y"]\arrow[d,"\bar{F}\bar{G}g"] & Y\arrow[d,"g"] \\ \bar{F}\bar{G}Y'\arrow[r,"\bar{\varepsilon}_{Y'}"] & Y' \end{tikzcd}\]
    commute, and satisfy the triangle identities
    \[\bar{\varepsilon}_{\bar{F}X}\circ\bar{F}\bar{\eta}_X=\id_{\bar{F}(X)} \text{ and } \bar{G}\bar{\varepsilon}_Y\circ\bar{\eta}_{\bar{G}Y}=\id_{\bar{G}(Y)}\]
    Thus, $\bar{\eta}$ and $\bar{\varepsilon}$ are the unit and the counit of the adjunction $\bar{F}\dashv\bar{G}$. Then (b) follows by \cite[Theorem 1.7.1]{Mil}.
\end{proof}
\end{prop}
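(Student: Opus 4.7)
The plan for part (1)(a) is direct. I would define $\bar F$ pointwise: for a functor $X \colon C \to \Mods(R)$ and a natural transformation $f \colon X \to X'$, set $\bar F X := F \circ X$ and $(\bar F f)_c := F(f_c)$. Naturality of $\bar F f$ in $c$ is immediate from naturality of $f$ combined with the functoriality of $F$, and additivity of $\bar F$ is inherited from that of $F$. For (1)(b), I would lift $\bar F$ degreewise to a triangulated functor $\cK(RC) \to \cK(SC)$ and then use the K-injective and K-projective resolutions supplied by \cref{Res}: set $\mathbf{R}\bar F X := \bar F(\mathbf{i} X)$ and $\mathbf{L}\bar F X := \bar F(\mathbf{p} X)$. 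Well-definedness on $\cD(RC)$ is standard once we know that quasi-isomorphisms between K-injective (respectively K-projective) complexes are isomorphisms in $\cK(RC)$, which is again part of \cref{Res}.

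For (2)(a), given the unit $\eta$ and counit $\varepsilon$ of $F \dashv G$, I would define $\bar\eta$ and $\bar\varepsilon$ pointwise by $(\bar\eta_X)_c := \eta_{X(c)}$ and $(\bar\varepsilon_Y)_c := \varepsilon_{Y(c)}$. Three verifications are needed. First, $\bar\eta_X$ and $\bar\varepsilon_Y$ must themselves be natural in the variable $c$, i.e.\ live in the functor categories $\Mods_R(RC)$ and $\Mods_S(SC)$; this follows by applying naturality of $\eta$ and $\varepsilon$ to the morphism $X(\alpha)$ (respectively $Y(\alpha)$) for each $\alpha \colon c \to d$ in $C$. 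Second, $\bar\eta$ and $\bar\varepsilon$ must be natural in $X$ and $Y$; this is again a componentwise check that reduces to the naturality of $\eta$ and $\varepsilon$. Third, the triangle identities for $\bar\eta, \bar\varepsilon$ reduce pointwise to those for $\eta, \varepsilon$.

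Finally, for (2)(b), the plan is to invoke the standard derived adjunction principle: since $\bar F \dashv \bar G$ on the abelian level and both K-projective and K-injective resolutions are available by \cref{Res}, one produces a natural isomorphism
\[\Hom_{\cD(SC)}(\mathbf{L}\bar F X, Y) \cong \Hom_{\cK(SC)}(\bar F \mathbf{p} X, \mathbf{i} Y) \cong \Hom_{\cK(RC)}(\mathbf{p} X, \bar G \mathbf{i} Y) \cong \Hom_{\cD(RC)}(X, \mathbf{R}\bar G Y),\]
where the middle step uses the adjunction lifted to $\cK$ (pairing a K-projective source against a K-injective target kills homotopy issues). The main difficulty here is really just bookkeeping: the pointwise constructions in (2)(a) must be checked to land in the correct functor categories, and then (2)(b) follows essentially formally from the abelian adjunction together with \cref{Res}; none of the steps involves a genuine conceptual obstacle.
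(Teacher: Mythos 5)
Your proposal is correct and follows essentially the same route as the paper: define $\bar F$ and the unit/counit pointwise, check naturality in both the $C$-variable and the module variable, verify the triangle identities componentwise, and then pass to derived functors via the K-injective/K-projective resolutions supplied by \cref{Res}. The only cosmetic difference is that for (1)(b) and (2)(b) the paper delegates the derived-functor existence and the derived adjunction to cited theorems of Yekutieli and Mili\v{c}i\'c, whereas you spell out the underlying construction ($\mathbf{R}\bar F := \bar F\circ\mathbf{i}$, $\mathbf{L}\bar F := \bar F\circ\mathbf{p}$) and the standard $\Hom$-chain through the homotopy category explicitly; both are the same argument at different levels of detail.
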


When $R$ is a commutative ring, as it will be from now on, there are two actions of the category $\Mods(R)$ on $\Mods_R(RC)$ which plays a relevant role. These are given by the bifunctors
\[\_\otimes_R\_,\Hom_R(\_,\_):\Mods(R)\times\Mods_R(RC)\to\Mods_R(RC)\]
where for every $M\in\Mods(R)$, $X\in\Mods_R(RC)$ and $c\in C$, they are defined as
\[(M\otimes_R X)(c):=M\otimes_R X(c) \text{ and } \Hom_R(M,X)(c):=\Hom_R(M,X(c))\]

\begin{rem}
    These actions extend to the homotopy categories in the standard way, i.e. by taking, for any $R$-complex $M$ and $RC$-complex $X$, the $\amalg$-totalization of the double complex $M^i\otimes_R X^j$ and the $\Pi$-totalization of the double complex $\Hom_R(M^{-i},X^j)$.
\end{rem}

Note that, K-projective complexes of $\cK(R)$ are exactly those $P$ which make the functor $\Hom_R(P,\_):\cK(RC)\to\cK(RC)$ preserve acyclicity. Analogously, we call a complex $F\in\cK(R)$ \textit{K-flat} if the functor $F\otimes_R\_:\cK(RC)\to\cK(RC)$ preserves acyclicity. Moreover, we say that a complex $X\in\cK(RC)$ is \textit{$R$-K-flat} (resp. \textit{$R$-K-injective}) if the functor $\_\otimes_R X:\cK(R)\to\cK(RC)$ (resp. $\Hom_R(\_,X):\cK(R)\to\cK(RC)$) sends acyclic $R$-complexes to acyclic $RC$-complexes. Moreover, we call a functor $X\in\Mods_R(RC)$ \textit{objectwise} flat (resp. injective) if $X(c)$ is a flat (resp. injective) $R$-module for any $c\in C$.

The next lemma shows some examples of these complexes and the relationship between them.

\begin{lem}\label{SmartRes}
    The following holds:
    \begin{enumerate}
        \item K-projective complexes of $\cK(R)$ and bounded above complexes of flat modules are K-flat;
        \item K-projective complexes of $\cK(RC)$ and bounded above complexes of objectwise flat functors are $R$-K-flat;
        \item K-injective complexes of $\cK(RC)$ and bounded below complexes of objectwise injective functors are $R$-K-injective.
    \end{enumerate}  
\begin{proof}
    Let us prove (3), the others follow similarly. Consider the class
    \[\leftcurly Y\in\cK(RC)\mid\Hom_R(\_,Y) \text{ preserves acyclicity}\rightcurly\] 
    It is a triangulated subcategory of $\cK(RC)$ closed under products and containing stalk complexes of objectwise injective functors. In particular, it contains stalk complexes of injective functors and so, by \cref{KInj}, it contains K-injective complexes of $\cK(RC)$. Moreover, since bounded below complexes of objectwise injective functors can be reconstructed, via extensions and products, from bounded ones, they are contained in the above class too.
\end{proof}
\end{lem}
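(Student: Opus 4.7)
The plan is to treat all six claims uniformly, mimicking the strategy that the author illustrates for part (3). For each assertion I would introduce the class of complexes for which the corresponding bifunctor preserves acyclicity, verify that this class is a triangulated subcategory of the appropriate homotopy category closed under products or coproducts, show that it contains the stalk complexes of the flat/projective/injective objects in question, and then invoke \cref{KInj} (or its K-projective dual) to conclude that it also contains every K-projective or K-injective complex in the corresponding homotopy category.

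Concretely, for (1) I would consider
\[\mathcal{F}_1 := \{F\in\cK(R)\mid F\otimes_R-\text{ preserves acyclicity on }\cK(RC)\};\]
this is triangulated and closed under coproducts, and every stalk complex of a flat $R$-module belongs to it by the degreewise exactness of $F\otimes_R-$ on modules. Since stalks of projective modules are in particular stalks of flat modules, the K-projective analogue of \cref{KInj} forces $\mathcal{F}_1$ to contain all K-projective complexes of $\cK(R)$. For (2) I would run the same argument with
\[\mathcal{F}_2 := \{X\in\cK(RC)\mid-\otimes_R X\text{ preserves acyclicity on }\cK(R)\};\]
the only extra input needed is that the small projective generators of $\Mods_R(RC)$ guaranteed by \cref{GroCat} are representable functors $R\cdot\Hom_C(c,-)$, which take free $R$-module values, so every projective $RC$-module is objectwise flat, placing stalks of projective functors in $\mathcal{F}_2$.

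The bounded above portions of (1) and (2), and the bounded below portion of (3), follow by the same reconstruction argument the author sketches at the end of (3). A bounded above complex of flat (resp. objectwise flat) objects is the homotopy colimit of its brutal truncations, each of which is a bounded complex and hence an iterated extension of stalks; dually, a bounded below complex of objectwise injective functors is the homotopy limit of its bounded truncations. These presentations put the complex inside the triangulated closure under coproducts (resp. products) of the relevant stalk complexes, which is contained in the acyclicity-preservation class by the previous step.

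The main obstacle is essentially bookkeeping: one must carefully track which homotopy category each class lives in and which (co)product closure is available on the acyclicity-preservation side, and match it with the (co)product closure used in the K-projective/K-injective characterization of \cref{KInj}. Once these dualities are aligned correctly, each of the six claims reduces to the routine verification that tensoring with a flat object, or $\Hom_R(-,\,\cdot\,)$ into an injective object, is exact on stalk complexes, which is immediate from the module-theoretic definitions.
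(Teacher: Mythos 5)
Your proof is correct and follows essentially the same strategy as the paper: define the class of complexes for which the relevant bifunctor preserves acyclicity, observe it is triangulated and closed under the appropriate (co)products, check that it contains the needed stalk complexes, and invoke \cref{KInj} or its K-projective dual, with bounded complexes handled by the same Milnor-style reconstruction. The one piece you usefully make explicit, which the paper leaves implicit under ``the others follow similarly,'' is that projective $RC$-modules are objectwise flat because the small projective generators from \cref{GroCat} are representable functors taking free $R$-module values.
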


\begin{thm}\label{DerAct}
    We have that:
    \begin{enumerate}
        \item\begin{enumerate}
            \item The bifunctor $\_\otimes_R\_$ admits a left-derived functor $\_\otimesL\_$, which can be computed, independently up to isomorphism, either using K-flat resolutions in the first variable or $R$-K-flat resolutions in the second;
            \item The bifunctor $\Hom_R(\_,\_)$ admits a right-derived functor $\RHom_R(\_,\_)$, which can be computed, independently up to isomorphism, either using K-projective resolutions in the first variable or $R$-K-injective resolutions in the second;
        \end{enumerate}
        \item For any $R$-complex $M$ the functors $M\otimesL\_$ and $\RHom_R(M,\_)$ form an adjoint pair. In particular, for any two $RC$-complexes $X$ and $Y$, we have the isomorphism
        \[\Hom_{\cD(RC)}(M\otimesL X,Y)\cong\Hom_{\cD(RC)}(X,\RHom_R(M,Y))\]
    \end{enumerate}
\begin{proof}
    (1) By \cref{Res} and \cref{SmartRes}, $\cK(R)$ admits K-flat (resp. K-projective) resolutions and $\cK(RC)$ admits $R$-K-flat (resp. $R$-K-injective) resolutions and, for any quasi-isomorphisms $f:M\to M'$ between K-flat (resp. K-projective) complexes and $g:X\to X'$ between $R$-K-flat (resp. $R$-K-injective) complexes, the morphism $f\otimes_R g$ (resp $\Hom_R(f,g)$) is a quasi-isomorphism. Thus, the statement follows from \cite[Theorem 9.3.16]{Yek} (resp. \cite[Theorem 9.3.11, Remark 9.3.18]{Yek}). Moreover, denoting by $\mathbf{f}M\to M$ and $\mathbf{f_R}X\to X$ a K-flat resolution of $M$ and a $R$-K-flat resolution of $X$ (resp. $\mathbf{p}M\to M$ and $X\to\mathbf{i_R}X$ for K-projective and $R$-K-injective), we have that
    \[(M\otimesL\_)(X):=M\otimes_R\mathbf{f_R}X\cong\mathbf{f}M\otimes_R\mathbf{f_R}X\cong\mathbf{f}M\otimes_R X=:(\_\otimesL X)(M)\]
    (resp. $\RHom_R(M,\_)(X)\cong\Hom_R(\mathbf{p}M,\mathbf{i_R}X)\cong\RHom_R(\_,X)(M)$).
    
    \noindent (2) Follows from \cref{FunProp}.
\end{proof}
\end{thm}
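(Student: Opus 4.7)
My plan is to reduce everything to the general machinery of deriving bifunctors, with the concrete inputs being the resolutions provided by \cref{Res} and \cref{SmartRes}, and then to extract the adjunction in (2) by combining the classical Hom--tensor adjunction at the chain level with a K-injectivity preservation argument.

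For (1)(a), I would invoke a two-variable left-derivation theorem of the type in \cite[Theorem 9.3.16]{Yek}. Its hypotheses ask for resolving subcategories in each variable on which the bifunctor preserves quasi-isomorphisms: in the first variable one uses K-flat complexes in $\cK(R)$ and in the second one uses $R$-K-flat complexes in $\cK(RC)$. Both resolutions exist by \cref{Res} combined with \cref{SmartRes}, and the quasi-isomorphism preservation is the very definition of K-flatness and $R$-K-flatness. The agreement of the two computations follows from a bicomplex comparison: given a K-flat resolution $\mathbf{f}M \to M$ and an $R$-K-flat resolution $\mathbf{f_R}X \to X$, there are quasi-isomorphisms
\[\mathbf{f}M \otimes_R X \longleftarrow \mathbf{f}M \otimes_R \mathbf{f_R}X \longrightarrow M \otimes_R \mathbf{f_R}X,\]
both induced by tensoring with a K-flat or $R$-K-flat object. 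Part (1)(b) is formally dual, using the right-derivation counterpart \cite[Theorem 9.3.11]{Yek} with K-projective resolutions in the first variable and $R$-K-injective resolutions in the second, again supplied by \cref{Res} and \cref{SmartRes}.

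For (2), I would fix an $R$-complex $M$, take a K-projective resolution $\mathbf{p}M \to M$ in $\cK(R)$ and a K-injective resolution $Y \to \mathbf{i}Y$ in $\cK(RC)$. By \cref{SmartRes}, $\mathbf{p}M$ is then also K-flat and $\mathbf{i}Y$ is also $R$-K-injective, so by (1) we have $M \otimesL X \cong \mathbf{p}M \otimes_R X$ and $\RHom_R(M,Y) \cong \Hom_R(\mathbf{p}M, \mathbf{i}Y)$. Applying the ordinary Hom--tensor adjunction degreewise and totalizing gives an isomorphism in $\cK(RC)$
\[\Hom_{\cK(RC)}(\mathbf{p}M \otimes_R X, \mathbf{i}Y) \cong \Hom_{\cK(RC)}(X, \Hom_R(\mathbf{p}M, \mathbf{i}Y)),\]
and since $\mathbf{i}Y$ is K-injective the left-hand side computes $\Hom_{\cD(RC)}(M\otimesL X, Y)$. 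The step I expect to be the main technical point is matching the right-hand side with $\Hom_{\cD(RC)}(X, \RHom_R(M,Y))$: for this one needs $\Hom_R(\mathbf{p}M, \mathbf{i}Y)$ to itself be K-injective in $\cK(RC)$, which I would establish by the same pattern as \cref{SmartRes}, namely by checking that the class of $I \in \cK(RC)$ for which $\Hom_R(\mathbf{p}M, I)$ is K-injective is a triangulated subcategory closed under products and containing stalk complexes of injective functors, so that it contains all K-injectives by \cref{KInj}. A slicker alternative is to derive the adjunction formally from \cref{FunProp}(2) one module at a time in $M$ and extend to complexes via the universal property of the left-derived functor.
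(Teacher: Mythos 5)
Your argument for part (1) is precisely the one in the paper: invoke Yekutieli's two-variable derivation theorems with the resolutions supplied by \cref{Res} and \cref{SmartRes}, and compare the two ways of computing through the intermediate complex $\mathbf{f}M \otimes_R \mathbf{f_R}X$. For part (2) the paper's proof is a terse ``follows from \cref{FunProp},'' whereas you supply a full chain-level argument. Your version is not merely more explicit but arguably more careful: \cref{FunProp}(2) as stated concerns an adjoint pair $F \dashv G$ of additive functors on module categories and so literally covers only the case where $M$ is a single $R$-module; passing from there to an arbitrary complex $M$ requires exactly the totalization and K-injectivity bookkeeping you carry out, and the paper does not record it. One small streamlining is available in your argument. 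To show that $\Hom_R(\mathbf{p}M,\mathbf{i}Y)$ is K-injective you do not need the triangulated-subcategory-closed-under-products device patterned on \cref{SmartRes}: the very same chain-level adjunction you already use gives, for any acyclic $A \in \cK(RC)$,
\[\Hom_{\cK(RC)}\bigl(A,\Hom_R(\mathbf{p}M,\mathbf{i}Y)\bigr)\cong\Hom_{\cK(RC)}\bigl(\mathbf{p}M\otimes_R A,\mathbf{i}Y\bigr),\]
and this vanishes because $\mathbf{p}M$ is K-flat by \cref{SmartRes}(1), so $\mathbf{p}M\otimes_R A$ is acyclic, and $\mathbf{i}Y$ is K-injective. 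That one-line check replaces the subcategory argument. Both your explicit route and the paper's appeal to \cref{FunProp} are valid; yours buys a self-contained and fully justified proof at the cost of a few extra lines, and I would keep it in the form you wrote rather than falling back on your ``slicker alternative,'' which would still need to explain how a module-by-module adjunction assembles into one for a complex $M$.
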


\subsection{t-structures}\label{t-str}
Let $\cT=\cD(\cA)$ be the derived category of a Grothendieck category and assume it is compactly generated (as it will be in the settings relevant to us, see \cref{DerCat}). A subcategory of $\cT$ is called \textit{suspended} (resp. \textit{cosuspended}) if it is closed under direct summands and positive (resp. negative) shifts and extensions; it is called \textit{cocomplete} (resp. \textit{complete}) if it is closed under coproducts (resp. products) and \textit{homotopically smashing} if it is closed under directed homotopy colimits; where, for a small category $I$, the \textit{homotopy colimit} functor $\mathrm{hocolim}_I:\cD\left(\cA^I\right)\to\cD(\cA)$ is meant to be the left derived functor of the colimit $\colim_I$ (see \cite[Example A.2]{HN} for a nice treatment in this context). Given a collection of objects $\mathcal{S}\subseteq\cT$ we will denote by $\susp_\cT\leftangle\mathcal{S}\rightangle$ (resp. $\cosusp_\cT\leftangle\mathcal{S}\rightangle$) the smallest suspended (resp. cosuspended) subcategory of $\cT$ containing the object in $\mathcal{S}$. We will use the decorations $\amalg,\Pi,\hs$ in the apex to mean that the subcategory is the smallest among the cocomplete, complete and homotopically smashing, respectively.

A \textit{t-structure} in $\cT$ consists of a pair of subcategories $(\cU,\cV)$ satisfying:
\begin{itemize}
    \item[(i)] $\Hom_\cT(\cU,\cV)=0$;
    \item[(ii)] For any $X\in\cT$, there is a distinguished triangle
    \[\begin{tikzcd} U \arrow[r] & X \arrow[r] & V \arrow[r,"+"] & {} \end{tikzcd}\]
    with $U\in\cU$ and $V\in\cV$;
    \item[(iii)] $\cU[1]\subseteq\cU$ (equivalently $\cV[-1]\subseteq\cV$).
\end{itemize}
In this case it holds that $\cU={}^\perp\cV$ and $\cV=\cU^\perp$. We call $\cU$ and $\cV$ the \textit{aisle} and the \textit{coaisle} of the t-structure, respectively. Recall that any aisle is a cocomplete suspended subcategory closed under homotopy colimits (see \cite[Proposition 5.2]{SSV}) and, by \cite{KV}, a suspended subcategory is an aisle if and only if the inclusion functor has a right adjoint, which we call the \textit{truncation functor} of the t-structure; the dual holds for coaisles and cosuspended subcategories.

\begin{rem}\label{GenRem}
    By \cite[Theorem 3.4]{AJSo}, for any set of objects $\cS\subseteq\cT$, the subcategory $\susp_\cT^\amalg(\cS)$ is equal to the aisle ${}^\perp(\cS[\geq\!0]^\perp)$, in the following denoted by $\aisle_\cT\leftangle\mathcal{S}\rightangle$. The analogue of this last statement for coaisles can be achieved in the following, more restricted, context. If $\cT=\cD(\bK Q)$ is the derived category of an hereditary algebra of finite representation type, by \cite[Theorem 12.20]{Bel}, it is a pure-semisimple triangulated category, i.e. any object is pure-injective. By \cite[Proposition 5.10, Corollary 5.11]{LV}, for any set of objects $\mathcal{E}\subseteq\cD(\bK Q)$, the subcategory $\cosusp_{\bK Q}^\Pi(\mathcal{E})$, automatically closed under pure-monomorphisms, is equal to the coaisle $({}^\perp\mathcal{E}[\leq\!0])^\perp$ (we refer to \textit{loc. cit.} for the undefined terminology). Moreover, since it is right orthogonal to a compactly generated aisle, it is homotopically smashing, and so equal to $\cosusp_{\bK Q}^{\Pi,\hs}(\mathcal{E})$. We will use this fact later in \cref{CoaisleAss}.
\end{rem}

A t-structure $(\cU,\cV)$ in $\cT$ is called:
\begin{itemize}[label=$\boldsymbol{\cdot}$]
    \item \textit{Compactly generated} if $\cU=\aisle_\cT\leftangle\cS\rightangle$, for some $\cS\subseteq\cT^c$;
    \item \textit{Homotopically smashing} if $\cV$ is closed under directed homotopy colimit;
    \item \textit{Stable} if $\cU$ and $\cV$ are triangulated subcategories.
\end{itemize}
Note that localizing subcategories, in the sense of Bousfield, are precisely the aisles of stable t-structures and, by \cite[Theorem A]{K}, a stable t-structure $(\cU,\cV)$ is homotopically smashing if and only if $\cU$ is a smashing subcategory. By \cite[Proposition 7.2]{SSV}, any compactly generated t-structure is homotopically smashing, and we say that the (\textit{stable}) \textit{telescope conjecture} holds in $\cT$ if any (stable) homotopically smashing t-structure is compactly generated. In particular, \cref{Main} (2) will answer affirmatively to this question in the case of t-structures of $\cD(RQ)$, where $R$ is a commutative noetherian ring and $Q$ a Dynkin quiver.

The next lemma shows how the t-structures in $\cD(RC)$ are affected by the actions of $\cD(R)$ described before.

\begin{lem}\label{ClosureLem}
    For any object $X\in\cD(RC)$ the following holds:
    \begin{enumerate}
        \item The class $\leftcurly M\in\cD(R)\mid M\otimesL X\in\susp_{RC}^\amalg\leftangle X\rightangle\rightcurly$ is a cocomplete suspended subcategory of $\cD(R)$ containing $\cD^{\leq0}(R)$;
        \item The class $\leftcurly M\in\cD(R)\mid\RHom_R(M,X)\in\cosusp_{RC}^\Pi\leftangle X\rightangle\rightcurly$ is a cocomplete suspended subcategory of $\cD(R)$ containing $\cD^{\leq0}(R)$;
        \item The class $\leftcurly M\in\cD(R)\mid M\otimesL X\in\cosusp_{RC}^\hs\leftangle X\rightangle\rightcurly$ is a homotopically smashing cosuspended subcategory of $\cD(R)$ containing $\cD^{[0,n]}(\Flat(R))$ for any $n\geq0$;
        \item If $X$ is compact, the class $\leftcurly M\in\cD(R)\mid M\otimesL X\in\cosusp_{RC}^{\Pi,\hs}\leftangle X\rightangle\rightcurly$ is a homotopically smashing complete cosuspended subcategory of $\cD(R)$.
    \end{enumerate}
\begin{proof}
    (1-2) The first two classes are trivially closed under positive shifts, extensions, coproducts and contain the stalk complex $R[0]$. Thus, by \cref{GenRem}, they contain the standard aisle $\cD^{\leq0}(R)$.
    
    \noindent (3) It is also clear that the third class is closed under negative shifts, extensions and directed homotopy colimits and contains the stalk complex $R[0]$. Thus, by Lazard's theorem (see \cite[5.5.7]{CFH}), it contains the stalk complex $F[0]$ for any flat $R$-module $F$. By closure under negative shifts and extensions, it contains all bounded complexes of flat modules concentrated in nonnegative degrees.
    
    \noindent (4) If $X$ is compact, it is isomorphic to a bounded complex of objectwise finitely presented functors and so the functor $\_\otimesL X$ commutes with products (see \cite[3.1.31]{CFH}).
\end{proof}
\end{lem}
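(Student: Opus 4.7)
My plan is to treat the four parts uniformly: each class is defined by a condition on $\_\otimesL X$ or $\RHom_R(\_,X)$ in the first variable, so I verify closure under the listed operations by exploiting how these derived bifunctors interact with shifts, triangles, coproducts, products, and directed homotopy colimits; then I identify the resulting subcategory either by invoking \cref{GenRem} or, for (3) and (4), by a direct Lazard-type argument. In every case, the containment of $R[0]$ is immediate from $R\otimesL X\cong X$ and $\RHom_R(R,X)\cong X$.

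For (1), the class is closed under positive shifts because $M[1]\otimesL X\cong(M\otimesL X)[1]$, under extensions because $\_\otimesL X$ is triangulated, and under coproducts because $\_\otimesL X$ commutes with coproducts. Hence it is a cocomplete suspended subcategory of $\cD(R)$ containing $R[0]$, and by \cref{GenRem} it contains $\aisle\langle R[0]\rangle=\cD^{\leq0}(R)$. The proof of (2) is formally identical after observing that $\RHom_R(\_,X)$ is contravariantly triangulated and sends coproducts in the first variable to products in $\cD(RC)$; the fact that $\cosusp^\Pi\langle X\rangle$ is product-closed is precisely what makes the class closed under coproducts.

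For (3), I verify closure under negative shifts, extensions, and directed homotopy colimits; the last point follows from the fact that $\_\otimesL X$ preserves directed homotopy colimits, being a left adjoint by \cref{DerAct}(2). The substantive step is the containment $\cD^{[0,n]}(\Flat(R))$. Applying Lazard's theorem, I write any flat $R$-module $F$ as a filtered colimit of finitely generated free modules $R^{n_i}$; since filtered colimits in $\Mods(R)$ are exact, they compute the corresponding homotopy colimit in $\cD(R)$, placing $F[0]$ in the class (with each $R^{n_i}$ already present via iterated extensions of $R$). A bounded complex $F^\bullet\in\cD^{[0,n]}(\Flat(R))$ is then reassembled from the stalks $F^k[-k]$ by iterated extensions using stupid truncations, each stalk being in the class thanks to closure under negative shifts. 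For (4), compactness of $X$ allows me to realise $X$ as a bounded complex of objectwise finitely presented functors, so $\_\otimesL X$ additionally commutes with arbitrary products; the class then inherits product closure on top of the properties already verified for (3).

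The only part where the argument is not a mechanical bookkeeping of closure properties is (3), and the main obstacle there is justifying that filtered colimits of flat modules in $\Mods(R)$ agree with their homotopy colimits in $\cD(R)$. Once this exactness-of-filtered-colimits input is in place, Lazard's theorem together with the stupid-truncation extensions completes the proof.
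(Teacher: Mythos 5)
Your proof follows essentially the same route as the paper: verify the closure properties by tracking how $\_\otimesL X$ and $\RHom_R(\_,X)$ interact with shifts, triangles, (co)products and homotopy colimits, then invoke \cref{GenRem} for (1)--(2) and a Lazard-plus-stupid-truncations argument for (3). The one place your justification goes slightly astray is in (3), where you attribute ``$\_\otimesL X$ preserves directed homotopy colimits, being a left adjoint by \cref{DerAct}(2)''. \cref{DerAct}(2) records the adjunction $M\otimesL\_\dashv\RHom_R(M,\_)$ as \emph{endofunctors} of $\cD(RC)$ with $M$ fixed; it says nothing about $\_\otimesL X:\cD(R)\to\cD(RC)$ with $X$ fixed. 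That functor \emph{is} a coproduct-preserving triangulated functor (and hence admits a right adjoint by Brown representability), and compatibility with directed homotopy colimits is standard and implicit in the paper's ``it is also clear'' --- but you should either cite Brown representability or compute the homotopy colimit through a K-flat resolution rather than lean on \cref{DerAct}(2), which does not apply. Otherwise the argument is correct, and the Lazard/truncation reassembly in (3) and the finitely-presented/product-commuting observation in (4) are exactly the paper's.
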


\begin{prop}\label{ClosureProp}
    For any t-structure $(\cU,\cV)$ in $\cD(RC)$ the following holds:
    \begin{enumerate}
        \item $\cD^{\leq0}(R)\otimesL\cU\subseteq\cU$;
        \item $\RHom_R(\cD^{\leq0}(R),\cV)\subseteq\cV$;
        \item If $\cV$ is homotopically smashing, $\cD^{[0,n]}(\Flat(R))\otimesL\cV\subseteq\cV$ for any $n\geq0$.
    \end{enumerate}
\end{prop}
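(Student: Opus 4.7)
The plan is to reduce each of the three statements to Lemma \ref{ClosureLem} by comparing the ``smallest closed subcategory containing $X$'' built inside $\cD(RC)$ with the corresponding closure properties enjoyed by an aisle or coaisle. Recall that any aisle $\cU$ is cocomplete, suspended, and closed under extensions, while any coaisle $\cV$ is complete, cosuspended, and closed under extensions; a homotopically smashing coaisle is, in addition, closed under directed homotopy colimits.

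For (1), fix $X \in \cU$. Since $\cU$ contains $X$ and is closed under coproducts, positive shifts, and extensions, we have $\susp_{RC}^\amalg\langle X\rangle \subseteq \cU$. Lemma \ref{ClosureLem}(1) then says that the class of $M\in\cD(R)$ with $M\otimesL X \in \susp_{RC}^\amalg\langle X\rangle$ contains $\cD^{\leq 0}(R)$, giving $\cD^{\leq 0}(R)\otimesL X\subseteq \cU$. Letting $X$ vary over $\cU$ yields (1). Statement (2) is dual: fix $X\in\cV$; since $\cV$ is closed under products, negative shifts, and extensions, $\cosusp_{RC}^\Pi\langle X\rangle\subseteq \cV$, and then Lemma \ref{ClosureLem}(2) gives $\RHom_R(M,X)\in\cV$ for every $M\in\cD^{\leq 0}(R)$.

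For (3), the extra hypothesis that $\cV$ is homotopically smashing is exactly what is needed to accommodate the ``hs'' flavor of the closure in Lemma \ref{ClosureLem}(3). Indeed, fix $X\in\cV$; then $\cV$ contains $X$ and is closed under negative shifts, extensions, and directed homotopy colimits, so $\cosusp_{RC}^{\hs}\langle X\rangle\subseteq\cV$. Lemma \ref{ClosureLem}(3) then gives $\cD^{[0,n]}(\Flat(R))\otimesL X\subseteq \cosusp_{RC}^{\hs}\langle X\rangle\subseteq \cV$ for every $n\geq 0$, and again letting $X$ vary finishes the proof.

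I do not anticipate a genuine obstacle: the proposition is essentially the ``variable-$X$'' upgrade of Lemma \ref{ClosureLem}, and all the work has been invested in building those closure subcategories. The only point to watch is the hypothesis of part (3): without homotopically smashing we lose closure under directed homotopy colimits, and Lazard's theorem (used in the proof of Lemma \ref{ClosureLem}(3) to reach flat stalks) can no longer be invoked, so the containment $\cD^{[0,n]}(\Flat(R))\otimesL X\subseteq \cV$ is not available in general.
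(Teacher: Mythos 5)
Your proposal is correct and follows exactly the paper's (one-line) proof, which simply cites \cref{ClosureLem}~(1), (2), (3) respectively; you have merely spelled out the intended deduction: for a fixed $X$ in the aisle (resp.\ coaisle), the closure properties of the aisle (resp.\ coaisle) ensure $\susp_{RC}^\amalg\langle X\rangle$ (resp.\ $\cosusp_{RC}^\Pi\langle X\rangle$, $\cosusp_{RC}^{\hs}\langle X\rangle$) is contained in it, and then the lemma identifies which $M\in\cD(R)$ act into that closure.
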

\begin{proof}
    It follows by \cref{ClosureLem} (1), (2), (3) respectively.
\end{proof}

\subsection{Support theory}
Let us now introduce some notions of support theory in $\cD(RC)$. Our main references for this section are \cite{S,Stour} and, from now on, we assume the ring $R$ to be commutative and noetherian.

Recall that a subset $V$ of $\Spec(R)$ is called \textit{specialization closed} if for any $\p\in V$ and $\q\in\Spec(R)$ such that $\p\subseteq\q$ it holds that $\q\in V$. By \cite[Theorem 3.3]{NeeTC}, for any specialization closed subset $V\subseteq\Spec(R)$, there is a (smashing) stable t-structure $(\cU_V,\cU_V^\perp)$ in $\cD(R)$, where $\cU_V=\{M\in\cD(R)\mid\supp(M)\subseteq V\}$ and $\supp(M)=\{\p\in\Spec(R)\mid\kp\otimesL M\neq0\}$. Denoting by $\Gamma_V$ and $\mathrm{L}_V$ the truncation functors, by abuse of notation, we define in $\cD(RC)$ the functors
\[\Gamma_V:=\Gamma_V R\otimesL\_ \text{ and } \mathrm{L}_V:=\mathrm{L}_V R\otimesL\_\]
By \cite[Lemma 4.3, 4.4, 4.11]{S},
for a specialization closed subset $V\subseteq\Spec(R)$, the essential images of $\Gamma_V$ and $\mathrm{L}_V$ form a compactly generated stable t-structure $(\Gamma_V\cD(RC),\mathrm{L}_V\cD(RC))$. In particular, for any complex $X\in\cD(RC)$, we can find a distinguished triangle
\[\begin{tikzcd} \Gamma_V X \arrow[r,"\gamma_X"] & X \arrow[r,"\lambda_X"] & \mathrm{L}_V X \arrow[r,"+"] & {} \end{tikzcd}\]
and the functors $\Gamma_V$ and $\mathrm{L}_V$ satisfy
\[\im\Gamma_V=\ker\mathrm{L}_V={}^\perp(\im\mathrm{L}_V) \text{ and } \im\mathrm{L}_V=\ker\Gamma_V=(\im\Gamma_V)^\perp\]
Given a prime ideal $\p\in\Spec(R)$, consider the specialization closed subsets
\[V(\p)=\leftcurly\q\in\Spec(R)\mid\p\subseteq\q\rightcurly \text { and }Z(\p)=\leftcurly\q\in\Spec(R)\mid\q\nsubseteq\p\rightcurly\]
then we define the functor
\[\Gamma_\p:=\mathrm{L}_{Z(\p)}\circ\Gamma_{V(\p)}\cong\Gamma_{V(\p)}\circ\mathrm{L}_{Z(\p)}\]
We will refer to its essential image $\Gamma_\p\cD(RC)$ as the \textit{stalk subcategory} of $\cD(RC)$ at $\p$ and, for a complex $X\in\cD(RC)$, we define the support of $X$ over $R$ as
\[\supp_R(X)=\leftcurly\p\in\Spec(R)\mid\Gamma_\p X\neq0\rightcurly\]

\begin{rem}
    This notion of support agrees with the usual one of homological algebra. Indeed, by \cite[Lemma 3.22]{Stour}, for any complex $X\in\cD(RC)$ we have that $\supp_R(X)=\leftcurly\p\in\Spec(R)\mid\kp\otimesL X\neq0\rightcurly$. However, the definition given here turned out to be more efficient because of the nice properties of the functor $\Gamma_\p$ instead of those of $\kp\otimesL\_$, such as idempotence and \cref{SuppProp} (3). 
\end{rem}

\begin{prop}\label{SuppProp}
    For any specialization closed subset $V$ of $\Spec(R)$ and any complex $X\in\cD(RC)$ the following holds:
    \begin{enumerate}
        \item $\supp_R(X)\subseteq V$ if and only if $\mathrm{L}_V X=0$ if and only if $\gamma_X:\Gamma_V X\to X$ is an isomorphism;
        \item $\supp_R(X)\cap V=\emptyset$ if and only if $\Gamma_V X=0$ if and only if $\lambda_X:X\to\mathrm{L}_V X$ is an isomorphism;
        \item $\supp_R(X)\subseteq\{\p\}$ if and only if $X\in\Gamma_\p\cD(RC)$ if and only if $\Gamma_\p X\cong X$.
    \end{enumerate}
\begin{proof}
    (1) By \cite[Proposition 5.7 (4)]{S}, the support of $X$ is contained in $V$ if and only if the support of $\mathrm{L}_V X$ is empty thus, by \cite[Theorem 6.9 (ii)]{S}, if and only if $\mathrm{L}_V X=0$. Since $\ker\mathrm{L}_V=\im\Gamma_V$, this is the case if and only if $X\cong\Gamma_V X'$ for some $X'\in\cD(RC)$ and in this case, by idempotence of the functor $\Gamma_V$, we have that $\Gamma_V X\cong\Gamma_V^2 X'\cong\Gamma_V X'\cong X$.
    
    \noindent (2) Similar to previous point.
    
    \noindent (3) The support of $X$ is contained in $\{\p\}$ if and only if $\Gamma_\q X=0$ for any $\q\neq\p$. In this case, by \cite[Theorem 6.9 (i)]{S}, $X$ lies in the localizing subcategory generated by $\Gamma_\p X$ and thus in $\Gamma_\p\cD(RC)$.  On the other hand, if  $X\cong\Gamma_\p X'$ for some $X'\in\cD(RC)$, by \cite[Corollary 5.8]{S}, the support of $X$ is contained in $\{\p\}$ and, by idempotence of the functor $\Gamma_\p$, we have that $\Gamma_\p X\cong\Gamma_\p^2 X'\cong\Gamma_\p X'\cong X$.
\end{proof}
\end{prop}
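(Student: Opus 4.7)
The plan is to pivot every equivalence through the orthogonality relations $\im\Gamma_V=\ker\mathrm{L}_V$ and $\im\mathrm{L}_V=\ker\Gamma_V$ recorded just above the statement, together with the defining triangle connecting $\Gamma_V X$, $X$, and $\mathrm{L}_V X$. What I will borrow from Stevenson's support-theoretic machinery is twofold: first, the detection principle that an object of $\cD(RC)$ is zero if and only if its support over $R$ is empty; second, the computation of how $\Gamma_V$ and $\mathrm{L}_V$ affect $\supp_R$, namely $\supp_R(\mathrm{L}_V X)=\supp_R(X)\setminus V$ and $\supp_R(\Gamma_V X)=\supp_R(X)\cap V$.

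For (1), the equivalence $\mathrm{L}_V X=0\Leftrightarrow\gamma_X$ is an isomorphism is immediate from the triangle. The remaining step is to argue $\supp_R(X)\subseteq V$ iff $\supp_R(\mathrm{L}_V X)=\emptyset$ iff $\mathrm{L}_V X=0$, applying the two inputs in turn. Part (2) is dual: $\supp_R(X)\cap V=\emptyset$ iff $\supp_R(\Gamma_V X)=\emptyset$ iff $\Gamma_V X=0$ iff $\lambda_X$ is an isomorphism.

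For (3), the natural route is to combine (1) and (2) through the definition $\Gamma_\p=\mathrm{L}_{Z(\p)}\circ\Gamma_{V(\p)}$ and the observation that $\{\p\}=V(\p)\setminus Z(\p)$. Thus $\supp_R(X)\subseteq\{\p\}$ iff $\supp_R(X)\subseteq V(\p)$ and $\supp_R(X)\cap Z(\p)=\emptyset$, which by (1) and (2) is equivalent to $\Gamma_{V(\p)}X\cong X$ and $\mathrm{L}_{Z(\p)}X\cong X$; this is precisely membership of $X$ in the essential image $\Gamma_\p\cD(RC)$. The implication $X\in\Gamma_\p\cD(RC)\Rightarrow\Gamma_\p X\cong X$ will then follow from a double application of the idempotence of $\Gamma_{V(\p)}$ and $\mathrm{L}_{Z(\p)}$.

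The main obstacle I anticipate is not the formal shuffling but locating the precise support-theoretic inputs cleanly. In particular, the detection principle \textit{empty support implies zero} applied in $\cD(RC)$ rather than in $\cD(R)$ requires that the action of $\cD(R)$ on $\cD(RC)$ behave correctly with respect to the stalk functors; I expect this to reduce to the corresponding statement over $\cD(R)$ via the defining formula $\Gamma_\p X=\Gamma_\p R\otimesL X$, but the compatibilities need to be cited carefully from Stevenson's work to avoid circularity.
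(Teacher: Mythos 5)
Your approach to parts (1) and (2) is essentially the paper's: both pivot through the identity $\supp_R(\mathrm{L}_V X)=\supp_R(X)\setminus V$ (Stevenson's Proposition 5.7 (4)) and the detection principle that empty support over $R$ forces an object of $\cD(RC)$ to vanish (Stevenson's Theorem 6.9 (ii)); the remaining step, ``$\mathrm{L}_V X=0$ iff $\gamma_X$ is an isomorphism,'' is handled in both via the defining triangle and idempotence. Your worry about circularity is resolved by the fact that Theorem 6.9 of [S] is stated precisely for the $R$-linear action on $\cD(RC)$, not just for $\cD(R)$, so you can cite it as is.

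For part (3) you take a genuinely different, and in my view cleaner, route. The paper proves it directly, citing Theorem 6.9 (i) of [S] to place $X$ in the localizing subcategory generated by $\Gamma_\p X$ and Corollary 5.8 of [S] for the converse support bound. You instead derive (3) as a formal consequence of (1) and (2), using $\{\p\}=V(\p)\setminus Z(\p)$ to rewrite $\supp_R(X)\subseteq\{\p\}$ as the conjunction of $\supp_R(X)\subseteq V(\p)$ and $\supp_R(X)\cap Z(\p)=\emptyset$, then characterize each condition by (1) and (2) respectively, and finally assemble $\Gamma_\p X\cong X$ by commutativity and idempotence of $\Gamma_{V(\p)}$ and $\mathrm{L}_{Z(\p)}$. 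This avoids the second batch of citations and makes (3) self-contained relative to (1)--(2); the only thing you need to check explicitly in that last step is the converse, namely that $X\cong\Gamma_\p X'$ implies both $\Gamma_{V(\p)}X\cong X$ and $\mathrm{L}_{Z(\p)}X\cong X$, which follows from idempotence and the fact that $\mathrm{L}_{Z(\p)}$ and $\Gamma_{V(\p)}$ commute. Your argument is correct.
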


For the specialization closed subsets $V(\p)$ and $Z(\p)$, we can find an explicit description of the functors $\mathrm{L}_{Z(\p)}$ and $\Gamma_{V(\p)}$ (see \cite[Example 2.24]{Stour}) through the $\otimes$-idempotent objects
\[\mathrm{L}_{Z(\p)}R\cong R_\p\text{ and }\Gamma_{V(\p)}R\cong K_\infty(\p)\]
where the complex $K_\infty(\p)$ is the \textit{infinite Koszul complex} at $\p$ (also known as stable Koszul complex or {\v C}ech complex and denoted by $\check{C}(\p)$, as in \cite{CFH}).

\begin{defn}
    For an ideal $\a$ in $R$, let $n$ be the minimal number of generators of $\a$ and $\{a_1,\ldots,a_n\}$ be a set of generators. The \textit{Koszul complex} (on $R$) at $\a$ is defined to be
    \[K(\a)=\bigotimes_{i=1}^n R\xrightarrow{a_i\cdot}R\]
    where the complex $R\xrightarrow{a_i\cdot}R$ is concentrated in degrees $-1$ and $0$ and the map is multiplication by $a_i$. It follows from \cite[14.4.29]{CFH} that, in some situations, the definition may be independent of the choice of generators. However, in general this is not the case, so for each finitely generated ideal $\a$ we have to chose a set of generators of minimal cardinality.
    
    The \textit{infinite Koszul complex} (on $R$) at $\a$ is defined to be
    \[K_\infty(\a)=\bigotimes_{i=1}^n R\xrightarrow{\rho_i}R[a_i^{-1}]\]
    where the complex $R\xrightarrow{\rho_i}R[a_i^{-1}]$ is concentrated in degrees $0$ and $1$ and the map $\rho_i$ is given by $r\mapsto\frac{r}{1}$. It follows from \cite[11.4.16]{CFH} that the definition is independent, up to quasi-isomorphism, of the choice of the set of generators.
\end{defn}

Note that $K(\a)$ is a complex of finitely generated free $R$-modules concentrated in degrees $[-n, 0]$, thus it is compact. In particular, $K(\a)^{-i}=R^{\binom{n}{i}}$ for any $0\leq i\leq n$ and the differentials are represented by matrices with entries in $\a$. Instead, $K_\infty(\a)$ is a bounded complex of flat $R$-modules concentrated in degrees $[0, n]$.

\begin{rem}\label{Koszul}
    We want to stress out some properties of these complexes:
    \begin{itemize}
        \item For any prime ideal $\p$, it holds
        \[\kp\otimesL K(\a)=
        \begin{cases}
            \bigoplus\limits_{i=0}^n\kp^{\binom{n}{i}}[i] &\text{if }\a\subseteq\p\\
            \quad\quad\quad 0 &\text{otherwise}
        \end{cases}\]
        Indeed, by \cite[15.1.10]{CFH} the complex is nonzero if and only if $\a\subseteq\p$ and in this case, tensoring by $\kp$ annihilates multiplications by elements of $\a$, so it annihilates all the differentials of $K(\a)$.
        \item \cite[14.3.2]{CFH} Known as \textit{self-duality} property: there is an isomorphism of functors
        \[\RHom_R(K(\a),\_)\cong K(\a)[-n]\otimesL\_\]
        \item \cite[11.4.12]{CFH} Let $\a^{(t)}$ denote the ideal $\left(a_1^t,\ldots,a_n^t\right)$, then
        \[K_\infty(\a)\cong\hocolim_{t\in\bN}K\left(\a^{(t)}\right)[-n]\]
    \end{itemize}
\end{rem}

\begin{rem}\label{GammaV}
    Actually, by \cite[Theorem 1.6]{HTC}, we can find an explicit description of the functor $\Gamma_V$ for any specialization closed subset $V$. Indeed, writing $V=\bigcup_{i\in I}V(\p_i)$ as an union of Zariski closed subsets of $\Spec(R)$ and denoting by $\mathcal{F}$ the lattice of finite subsets of $I$ and, for any $F\in\mathcal{F}$, by $V_F=\bigcup_{i\in F}V(\p_i)$, in \textit{loc. cit.} it is proved the isomorphism of triangles
    \[\begin{tikzcd} \Gamma_V R \arrow[r] \arrow[d,"\|\wr", phantom] & R \arrow[r] \arrow[d,"\|", phantom] & \mathrm{L}_V R \arrow[d,"\|\wr", phantom] \arrow[r,"+"] & {} \\
    \hocolim_{F\in\mathcal{F}}\Gamma_{V_F}R \arrow[r] & R \arrow[r] & \hocolim_{F\in\mathcal{F}}\mathrm{L}_{V_F}R \arrow[r,"+"] & {} \end{tikzcd}\]
    Note that $V_F$ is equal to the Zariski closed $V(\a_F)$ corresponding to the ideal $\a_F=\prod\limits_{i\in F}\p_i$. Thus, by \cite[Example 2.24]{Stour}, it holds that $\Gamma_{V_F}R\cong K_\infty(\a_F)$ and we get that
    \[\Gamma_V\cong\hocolim_{F\in\mathcal{F}}K_\infty(\a_F)\otimesL\_\cong\hocolim_{(F,t)\in\mathcal{F}\times\bN}\RHom_R\left(K\left(\a_F^{(t)}\right),\_\right)\] 
\end{rem}

\section{Local to global principle and Minimality}\label{LTGM}
The aim of this section is to generalize two properties of localizing subcategories of $\cD(RC)$ -- the \textit{local to global principle} and the \textit{minimality of stalk subcategories} -- to the setting of homotopically smashing cosuspended subcategories. These properties will allow us to reduce the problem of generating homotopically smashing coaisles, first from $\cD(RC)$ to the stalk subcategories $\Gamma_\p\cD(RC)$, and then to the better understood derived categories $\cD(\kp C)$. First introduced in \cite[Section 4.2]{BIK}, these property characterize, among the compactly generated $R$-linear triangulated categories (in the sense of \cite[Section 2.3]{BIK}), those which the authors called the \textit{stratified} ones. For a compactly generated $R$-linear category $\cT$, the authors develop a theory of support, equivalent to that of \cite{S} by \cite[Section 9]{S}, and identify the support of $\cT$ with a certain subset of $\Spec(R)$, denoted $\supp_R(\cT)$. In this setting, stratified $R$-linear categories are precisely those whose lattice of localizing subcategories, $\mathrm{Loc}(\cT)$, is parametrized by the subsets of $\supp_R(\cT)$; see \cite[Theorem 4.2]{BIK}. In particular, the local to global principle states that:
\begin{itemize}[itemindent=10pt]
    \item[\textsf{(LTG)}] For any $X\in\cT$, $\loc_\cT\leftangle X\rightangle=\loc_\cT\leftangle\Gamma_\p X\mid\p\in\Spec(R)\rightangle$;
\end{itemize}
and, by \cite[Proposition 3.6]{BIK}, it characterizes the categories for which $\mathrm{Loc}(\cT)$ is in bijection with the tuples $(\cS(\p))_{\p\in\Supp_R(\cT)}$, where $\cS(\p)\in\mathrm{Loc}(\Gamma_\p\cT)$ for any $\p\in\Spec(R)$. However, in the context of representations of small categories, stratification is not the right notion to consider; indeed, while the local to global principle holds in $\cD(RC)$, for any commutative noetherian ring $R$ and small category $C$ \cite[Theorem 3.5]{AS}, the minimality fails even in $\cD(\bK Q)$, for a field $\bK$ and a Dynkin quiver $Q$ \cite[Example 4.6]{BIK}.

In order to obtain a classification of the localizing subcategories of $(\cD(RC))$ and then prove the stable telescope conjecture (see \cref{t-str}), in \cite[Theorem 4.2]{AS} the authors replaced the minimality condition, with the following equivalent conditions:
\begin{itemize}
    \item[\textsf{(M)}] For any $\p\in\Spec(R)$ and $X\in\Gamma_\p\cD(RC)$, $\loc_{RC}\leftangle X\rightangle=\loc_{RC}\leftangle \kp\otimesL X\rightangle$;
    \item[\textsf{(M')}\!] For any $\p\in\Spec(R)$, there is a bijection between $\mathrm{Loc}(\Gamma_\p\cD(RC))$ and $\mathrm{Loc}(\cD(\kp C))$.
\end{itemize}
Even if these conditions are not equivalent to the minimality in \cite{BIK} for a general small category $C$, they are in the case of $\cD(R)$ and so, in a sense, they transfer the notion of minimality from the stratified categories to our context; so, it still makes sense to call them minimality conditions in $\cD(RC)$.

\subsection{Local to global principle}
In the following we will generalize \textsf{(LTG)} from localizing subcategories to cosuspended subcategories closed under homotopy colimits, thus avoiding the closure under positive shifts. Note that, while in the case of stable t-structures \textsf{(LTG)} is used to study localizing subcategories, i.e. the left side of the pair, our generalization also applies to any homotopically smashing t-structures, but on the side of the coaisles.

Let us start defining inductively a transfinite collection $\{W_\alpha\}$ of specialization closed subsets of $\Spec(R)$. Denoting by $\max W$ the set of maximal elements of $W$, let:
\begin{itemize}[label=$\boldsymbol{\cdot}$]
    \item $W_0=\max\Spec(R)$;
    \item $W_{\alpha+1}=W_\alpha\cup\max(\Spec(R)\setminus W_\alpha)$, for any successor ordinal $\alpha+1$;
    \item $W_\lambda=\bigcup\limits_{\beta<\lambda}W_\beta$, for any limit ordinal $\lambda$.
\end{itemize}
In this way, at some point, we reach the least ordinal $\delta$ satisfying $W_\delta=\Spec(R)$.

\begin{lem}\label{TransInd}
    Let $Y\in\cD(RC)$, then:
    \begin{enumerate}
        \item For any successor ordinal $\alpha+1$, there exists a distinguished triangle
        \[\begin{tikzcd} \Gamma_{W_\alpha}Y \arrow[r] & \Gamma_{W_{\alpha+1}}Y \arrow[r] & \bigoplus\limits_{\p\in W_{\alpha+1}\setminus W_\alpha}\Gamma_\p Y \arrow[r,"+"] & {} \end{tikzcd}\]
        \item  For any limit ordinal $\lambda$, we have that $\Gamma_{W_\lambda}Y\cong\hocolim_{\beta<\lambda}\Gamma_{W_\beta}Y$.
    \end{enumerate}
\begin{proof}
    (1) By \cite[Remark 4.2]{HNS}, for each ordinal $\alpha+1$, the following is a distinguished triangle in $\cD(R)$ 
    \[\begin{tikzcd} \Gamma_{W_\alpha}R \arrow[r] & \Gamma_{W_{\alpha+1}}R \arrow[r] & \bigoplus\limits_{\p\in W_{\alpha+1}\setminus W_\alpha}\Gamma_\p R \arrow[r,"+"] & {} \end{tikzcd}\]
    (beware of the change of notation in the reference: the functors $\mathbf{R}\Gamma_V$ and $\mathbf{R}\Gamma_\p$ there stand for $\Gamma_V$ and $\Gamma_{V(\p)}$ here). Then, applying $\_\otimesL Y$ to it, we get the triangle in the statement.
    
    \noindent (2) By \cite[Lemma 6.6]{S}, we have that $\Gamma_{W_\lambda}R\cong\hocolim_{\beta<\lambda}\Gamma_{W_\beta}R$, then the result follows applying $\_\otimesL Y$ to both sides.
\end{proof}
\end{lem}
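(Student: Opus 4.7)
The plan is to reduce both statements to parallel assertions about the $\otimes$-idempotent objects $\Gamma_{W_\alpha} R \in \cD(R)$, and then apply the functor $\_\otimesL Y : \cD(R)\to\cD(RC)$, which is triangulated, preserves coproducts, and preserves directed homotopy colimits. The key identity to invoke throughout is $\Gamma_V Y \cong \Gamma_V R \otimesL Y$ for any specialization closed $V$, valid by definition of $\Gamma_V$ on $\cD(RC)$ (and similarly for $\Gamma_\p$).

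For part (1), the essential input is a triangle in $\cD(R)$ of the form
\[\begin{tikzcd} \Gamma_{W_\alpha}R \arrow[r] & \Gamma_{W_{\alpha+1}}R \arrow[r] & \bigoplus\limits_{\p\in W_{\alpha+1}\setminus W_\alpha}\Gamma_\p R \arrow[r,"+"] & {} \end{tikzcd}\]
To build this, I would first observe that the primes $S_\alpha := W_{\alpha+1}\setminus W_\alpha$ are by construction the maximal elements of $\Spec(R)\setminus W_\alpha$; consequently $V(\p)\cap(\Spec(R)\setminus W_\alpha)=\{\p\}$ for each $\p\in S_\alpha$, and distinct primes in $S_\alpha$ are incomparable. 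This discreteness suggests applying $\mathrm{L}_{W_\alpha}$ to the standard triangle $\Gamma_{W_\alpha}R\to\Gamma_{W_{\alpha+1}}R\to C\to{}$ (where $C$ is the cone) and showing that $\mathrm{L}_{W_\alpha}C\cong\bigoplus_{\p\in S_\alpha}\Gamma_\p R$: indeed, since $\Gamma_{W_\alpha}C=0$ one has $C\cong\mathrm{L}_{W_\alpha}C$, and $C$ has support in $S_\alpha$, which is a discrete antichain of primes. The decomposition as a coproduct of stalks then follows because a complex with support contained in a pairwise incomparable set of primes splits into its local components via \cref{SuppProp}(3). After establishing the triangle in $\cD(R)$, applying $\_\otimesL Y$ yields the desired triangle in $\cD(RC)$, since tensor commutes with coproducts.

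For part (2), the plan is to show that the assignment $V\mapsto\Gamma_V R$ is continuous with respect to directed unions of specialization closed subsets, i.e.\ that $\Gamma_{W_\lambda}R \cong \hocolim_{\beta<\lambda}\Gamma_{W_\beta}R$ in $\cD(R)$. This should follow from the explicit description of $\Gamma_V$ via the infinite Koszul complexes recalled in \cref{GammaV}: writing $W_\lambda=\bigcup_{\beta<\lambda}W_\beta$ as a filtered union and unravelling the homotopy colimit over finite subsets of a generating set of Zariski closed pieces gives the claim. Once established in $\cD(R)$, tensoring with $Y$ preserves the directed homotopy colimit and yields the result in $\cD(RC)$.

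The main obstacle I anticipate is part (1): specifically, producing the splitting $\mathrm{L}_{W_\alpha}C\cong\bigoplus_{\p\in S_\alpha}\Gamma_\p R$ cleanly. One must verify both that the natural maps $\Gamma_\p C\to C$ assemble into an isomorphism onto $\mathrm{L}_{W_\alpha}C$ (using that the complement of $W_\alpha$, restricted to $\supp C$, is the antichain $S_\alpha$) and that the sum is a direct sum (using Hom-orthogonality between stalks at incomparable primes). Rather than reproving this from scratch, the efficient route is to cite the corresponding result for $\Gamma_V R$ in $\cD(R)$ from the literature on local cohomology and Bousfield localization (e.g.\ the Hrbek--Nakamura--Stovicek framework), and then apply $\_\otimesL Y$.
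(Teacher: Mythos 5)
Your proposal follows essentially the same strategy as the paper: both statements are reduced to the corresponding isomorphisms in $\cD(R)$ (with $Y=R$) and then transported to $\cD(RC)$ by applying the triangulated, coproduct- and $\hocolim$-preserving functor $\_\otimesL Y$, using the identity $\Gamma_V Y\cong\Gamma_V R\otimesL Y$. For the $\cD(R)$-level inputs you sketch a hands-on derivation (antichain of maximal primes, orthogonality of stalk subcategories, Koszul description from \cref{GammaV}) but correctly conclude that citing the literature is the efficient route; the paper does exactly that, invoking \cite[Remark 4.2]{HNS} for part (1) and \cite[Lemma 6.6]{S} for part (2).
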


\begin{thm}[\bf Local to Global Principle]\label{LTG}
    For any complex $Y\in\cD(RC)$, we have that
    \[\cosusp_{RC}^\hs\leftangle Y\rightangle=\cosusp_{RC}^\hs\leftangle\Gamma_\p Y\mid\p\in\Spec(R)\rightangle\]
\begin{proof}
    By \cref{TransInd}, any complex $Y$ is isomorphic to $\Gamma_{W_\delta}Y$, where $\delta$ is the least ordinal such that $W_\delta=\Spec(R)$. In particular, $Y$ can be reconstructed, through extensions, coproducts and direct homotopy colimits, from the complexes $\{\Gamma_\p Y\}_{\p\in\Spec(R)}$. Thus, the containment from left to right follows. On the other hand, $\Gamma_\p Y:=(K_\infty(\p)\otimesL R_\p)\otimesL Y$ is given by tensoring $Y$ with a bounded complex of flat modules concentrated in nonnegative degrees and so, by \cref{ClosureLem} (3), it is contained in $\cosusp_{RC}^\hs\leftangle Y\rightangle$ for any prime ideal $\p$.
\end{proof}
\end{thm}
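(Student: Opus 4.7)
My plan is to prove the two inclusions separately, leveraging the machinery assembled in \cref{Prelim} and \cref{LTGM}.

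For the inclusion ``$\supseteq$'', I would exploit the explicit form of $\Gamma_\p$. Unfolding $\Gamma_\p = \mathrm{L}_{Z(\p)} \circ \Gamma_{V(\p)}$ and using the identifications $\mathrm{L}_{Z(\p)} R \cong R_\p$ and $\Gamma_{V(\p)} R \cong K_\infty(\p)$, we obtain
\[\Gamma_\p Y \cong (R_\p \otimes_R K_\infty(\p)) \otimesL Y.\]
Since $R_\p$ is a flat $R$-module and $K_\infty(\p)$ is a bounded complex of flat $R$-modules concentrated in degrees $[0,n]$ for some $n \geq 0$, the coefficient $R_\p \otimes_R K_\infty(\p)$ lies in $\cD^{[0,n]}(\Flat(R))$. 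Applying \cref{ClosureLem} (3) with $X = Y$ then places $\Gamma_\p Y$ inside $\cosusp_{RC}^\hs \langle Y \rangle$, proving this inclusion.

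For the reverse inclusion ``$\subseteq$'', I would reconstruct $Y$ from its local pieces $\{\Gamma_\p Y\}_{\p \in \Spec(R)}$ by transfinite induction along the filtration $\{W_\alpha\}$. Set $\cC := \cosusp_{RC}^\hs \langle \Gamma_\p Y \mid \p \in \Spec(R) \rangle$. Since $\supp_R(Y) \subseteq \Spec(R) = W_\delta$, \cref{SuppProp} (1) yields $Y \cong \Gamma_{W_\delta} Y$, so it suffices to show that $\Gamma_{W_\alpha} Y \in \cC$ for every $\alpha \leq \delta$. At a limit ordinal $\lambda$, \cref{TransInd} (2) identifies $\Gamma_{W_\lambda} Y$ with $\hocolim_{\beta < \lambda} \Gamma_{W_\beta} Y$, which lies in $\cC$ because $\cC$ is closed under directed homotopy colimits. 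At a successor ordinal $\alpha + 1$, \cref{TransInd} (1) provides the triangle
\[\Gamma_{W_\alpha} Y \to \Gamma_{W_{\alpha+1}} Y \to \bigoplus_{\p \in W_{\alpha+1} \setminus W_\alpha} \Gamma_\p Y \xrightarrow{+}\]
whose third term is a coproduct of generators of $\cC$ and whose first term belongs to $\cC$ by induction; since $\cC$ is cosuspended and closed under extensions, the middle term $\Gamma_{W_{\alpha+1}} Y$ lies in $\cC$ as well. For the base case $\alpha = 0$, adopting the formal convention $W_{-1} := \emptyset$ the same triangle degenerates to an isomorphism $\Gamma_{W_0} Y \cong \bigoplus_{\m \in \max \Spec(R)} \Gamma_\m Y$, which manifestly lies in $\cC$.

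The only subtle point I anticipate is that the successor step uses closure of $\cC$ under the potentially infinite coproduct $\bigoplus_{\p \in W_{\alpha+1} \setminus W_\alpha} \Gamma_\p Y$. This is automatic: arbitrary coproducts arise as directed homotopy colimits of their finite sub-coproducts indexed by the poset of finite subsets, so any hs-closed class containing a family of objects automatically contains their coproduct. With this observation in hand, the transfinite induction runs smoothly and the theorem follows.
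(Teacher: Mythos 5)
Your proof is correct and takes essentially the same approach as the paper: the inclusion ``$\supseteq$'' by recognizing $R_\p\otimes_R K_\infty(\p)$ as a bounded complex of flats in nonnegative degrees and invoking \cref{ClosureLem}~(3), and ``$\subseteq$'' by transfinite induction along $\{W_\alpha\}$ via \cref{TransInd}. The paper compresses the induction into a single sentence (``$Y$ can be reconstructed, through extensions, coproducts and direct homotopy colimits, from $\{\Gamma_\p Y\}$''), while you spell out the base, successor, and limit cases and flag the useful observation that closure under directed homotopy colimits, combined with closure under extensions, yields closure under arbitrary coproducts.
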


Note that the last containment is the one that forces us to switch to coaisles, in fact aisles of non stable t-structure are never closed under tensor products with complexes concentrated in nonnegative degrees.

\subsection{Minimality of stalk subcategories}
The following theorem is the analogue of \textsf{(M)} for cosuspended subcategories closed under products and directed homotopy colimits. Unlike the theorem above, this is not a generalization of \textsf{(M)}, in fact it does not apply to localizing subcategories, since they are not closed under products in general. However, we will prove in an upcoming work that this result implies \textsf{(M')} for homotopically smashing coaisles in the context of quiver representations, i.e. for any $\p\in\Spec(R)$, there is a bijection between $\mathrm{Coaisle_{hs}}(\Gamma_\p\cD(RQ))$ and $\mathrm{Coaisle_{hs}}(\cD(\kp Q))$ for any finite quiver $Q$ (see \cite{HS}).

\begin{thm}[\bf Minimality of Stalk Subcategories]\label{Min}
    For any complex $Y\in\Gamma_\p\cD(RC)$, we have that
    \[\cosusp_{RC}^{\Pi,\hs}\leftangle Y\rightangle=\cosusp_{RC}^{\Pi,\hs}\leftangle\RHom_R(\kp, Y)\rightangle\]
\begin{proof}
    The containment from right to left follows from \cref{ClosureLem} (2). For the other direction, consider the class
    \[\mathcal{M}=\leftcurly M\in\cD(R_\p)\mid\RHom_R(M, Y)\in\cosusp_{RC}^{\Pi,\hs}\leftangle\RHom_R(\kp, Y)\rightangle\rightcurly\]
    It is a cocomplete suspended subcategory of $\cD(R_\p)$ containing $\kp\cong R_\p/\p_\p$. Thus, by \cref{GenRem} and \cite[Proposition
    2.4]{AJS}, it contains $R_\p/\p_\p^{(t)}$ for any $t\geq1$ and, by \cite[Lemma 5.3]{Hrb}, it contains the Koszul complexes $K^{R_\p}\left(\p_\p^{(t)}\right)\cong K\left(\p^{(t)}\right)_\p$, where the first complex is computed over $R_p$ and the second over $R$.
    
    \noindent Consider now the class
    \[\leftcurly M\in\cD(R_\p)\mid M\otimesL Y\in\cosusp_{RC}^{\Pi,\hs}\leftangle\RHom_R(\kp, Y)\rightangle\rightcurly\]
    It is a homotopically smashing cosuspended subcategory of $\cD(R_\p)$. By self-duality of Koszul complexes in \cref{Koszul}, $\RHom_R(K\left(\p^{(t)}\right)_\p, Y)\cong K\left(\p^{(t)}\right)_\p[-n]\otimesL Y$, thus it contains the complex $K\left(\p^{(t)}\right)_\p[-n]$ for any $t\geq1$. Since it is closed under directed homotopy colimits, by \cref{Koszul}, it contains the the complex $K_\infty(\p)_\p=\Gamma_\p R$ and we can conclude that $Y\cong\Gamma_\p R\otimesL Y\in\cosusp_{RC}^{\Pi,\hs}\leftangle\RHom_R(\kp, Y)\rightangle$.
\end{proof}
\end{thm}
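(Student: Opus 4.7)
The plan is to handle the two inclusions separately. The easy direction $\supseteq$ should follow immediately from \cref{ClosureLem}(2): since $\kp$, as a stalk complex of an $R$-module, lies in $\cD^{\leq 0}(R)$, the class of $M$ with $\RHom_R(M,Y)\in\cosusp_{RC}^\Pi\langle Y\rangle$ contains $\kp$, and that $\Pi$-closure sits inside the $(\Pi,\hs)$-closure. Hence $\RHom_R(\kp,Y)\in\cosusp_{RC}^{\Pi,\hs}\langle Y\rangle$, and the $(\Pi,\hs)$-closure of this single object is contained in $\cosusp_{RC}^{\Pi,\hs}\langle Y\rangle$.

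For the harder inclusion I would aim to prove $Y$ itself lies in $\cosusp_{RC}^{\Pi,\hs}\langle\RHom_R(\kp,Y)\rangle$ via a two-step \emph{duality flip} between $\RHom_R(-,Y)$ and $-\otimesL Y$. First I would consider the test class $\mathcal{M}:=\{M\in\cD(R_\p)\mid\RHom_R(M,Y)\in\cosusp_{RC}^{\Pi,\hs}\langle\RHom_R(\kp,Y)\rangle\}$. Because $\RHom_R(-,Y)$ converts positive shifts into negative ones and coproducts into products, $\mathcal{M}$ will be a cocomplete suspended subcategory of $\cD(R_\p)$ containing $\kp$. By \cref{GenRem} it then contains the whole aisle generated by $\kp$, which by standard filtration arguments (\cite[Proposition 2.4]{AJS}) picks up the finite-length modules $R_\p/\p_\p^{(t)}$ for all $t\geq 1$, and then \cite[Lemma 5.3]{Hrb} promotes these to the Koszul complexes $K(\p^{(t)})_\p$.

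Next I would introduce the dual test class $\mathcal{N}:=\{M\in\cD(R_\p)\mid M\otimesL Y\in\cosusp_{RC}^{\Pi,\hs}\langle\RHom_R(\kp,Y)\rangle\}$, which is a homotopically smashing cosuspended subcategory of $\cD(R_\p)$. The self-duality $\RHom_R(K(\p^{(t)}),-)\cong K(\p^{(t)})[-n]\otimesL-$ from \cref{Koszul} provides the bridge between the two sides: it converts the outcome of Step 1 into the assertion that $K(\p^{(t)})_\p[-n]\in\mathcal{N}$ for every $t$. Using the presentation $K_\infty(\p)\cong\hocolim_t K(\p^{(t)})[-n]$ and the closure of $\mathcal{N}$ under directed homotopy colimits, $K_\infty(\p)_\p\cong\Gamma_\p R$ will land in $\mathcal{N}$. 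Finally, since $Y\in\Gamma_\p\cD(RC)$, \cref{SuppProp}(3) yields $Y\cong\Gamma_\p R\otimesL Y$, and this last complex lies in the target subcategory by membership of $\Gamma_\p R$ in $\mathcal{N}$.

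The main obstacle I expect is precisely this flip between the contravariant and covariant setups. The target subcategory is fixed, yet Step 1 constructs objects whose \emph{contravariant} image into $\cD(RC)$ lies in the target, while Step 2 needs objects whose \emph{covariant} image does, and Koszul self-duality is the only natural mechanism available to convert one type of membership into the other. Pushing $\mathcal{M}$ all the way up to the Koszul complexes (rather than stopping at the finite-length modules $R_\p/\p_\p^{(t)}$) is therefore the technical hinge: it is precisely what is needed so that self-duality transplants the conclusion into $\mathcal{N}$, after which the homotopy colimit presentation of $K_\infty(\p)$ and the fact that $Y$ is supported at $\p$ close the argument.
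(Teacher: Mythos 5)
Your proposal is correct and follows essentially the same route as the paper: the easy inclusion via \cref{ClosureLem}(2), the cocomplete suspended test class $\mathcal{M}$ built up via \cref{GenRem}, \cite[Proposition 2.4]{AJS}, and \cite[Lemma 5.3]{Hrb} to reach the Koszul complexes, the flip to the homotopically smashing cosuspended class via Koszul self-duality, and the homotopy colimit presentation of $K_\infty(\p)$ combined with $Y\cong\Gamma_\p R\otimesL Y$ to conclude. The only cosmetic difference is that you name the second class $\mathcal{N}$ and articulate the contravariant-to-covariant ``duality flip'' more explicitly, but the mathematical content is identical.
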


\begin{rem}
    Note that, we use products only in the first part of the proof to make $\mathcal{M}$ cocomplete and then apply \cref{GenRem}. It turns out that, with a bit more of work, we can prove that the inclusion $Y\in\cosusp_{RC}^\hs\leftangle\RHom_R(\kp, Y)\rightangle$ holds without closure under products. Indeed, in this case, the class
    \[\mathcal{M}=\leftcurly M\in\cD(R_\p)\mid\RHom_R(M, Y)\in\cosusp_{RC}^\hs\leftangle\RHom_R(\kp, Y)\rightangle\rightcurly\]
    is just a suspended subcategory of $\cD(R_\p)$ containing $\kp\cong R_\p/\p_\p$. For any $t\geq 1$, the Koszul complex $K^{R_\p}\left(\p_\p^{(t)}\right)$ is a bounded complex of finite free $R_\p$-modules concentrated in degrees $[-n,0]$ and, for any $i\in[-n,0]$, its cohomology module $H^i$ is a finitely generated $R_\p$-module annihilated by $\p_\p^{(t)}$ (see \cite[11.4.6 (a)]{CFH}). In particular, each $H^i$ admits a finite filtration with composition factors isomorphic to $\kp$ (see \cite[Lemma 10.62.1-2-4]{STA}). It follows that, for any $i$, the module $H^i$ is contained in $\mathcal{M}$ and, by closure under extensions, so does the complex $K^{R_\p}\left(\p_\p^{(t)}\right)\cong K\left(\p^{(t)}\right)_\p$. Then we can conclude as in the proof of \cref{Min}.
\end{rem}

\section{Dynkin quivers and Lattice lifts}\label{DynLift}
When the small category $C$ is the free category on a finite quiver $Q$, the category $\Mods_R(RC)$ is equivalent to the category of modules $\Mods(RQ)$, where $RQ$ is the free $R$-algebra on the set of paths of $Q$ with the composition of paths as product. In this case, the functors $X:C\to\Mods(R)$ can be seen as representations of the quiver $Q=(Q_0,Q_1)$ over the ring $R$, i.e. $X=(X_i,X_\alpha)_{i\in Q_0,\alpha\in Q_1}$ where $X_i$ is an $R$-module for any vertex $i\in Q_0$ and $X_\alpha:X_i\to X_j$ is an $R$-linear map for any arrow $\alpha:i\to j\in Q_1$.

Moreover, if $Q$ is a Dynkin quiver and $R=\bK$ a field, the path algebra $\bK Q$ is a representation-finite hereditary algebra and it is well-known that the indecomposable modules do not depend on the field, be it algebraically closed or not (see, for example, \cite[Theorem 1.23]{DDPW}). In this case also the (compactly generated) t-structures of $\cD(\bK Q)$ are independent of the field $\bK$. In particular, they depend on the lattice $\mathbf{Nc}(Q)$ of noncrossing partitions of $Q$ (see \cref{App} for more details).

\begin{prop*}[\cref{FiltNc}]
    For any field $\bK$ and Dynkin quiver $Q$, there is an order-preserving bijection between the lattices $\Aisle(\cD(\bK Q))$ and $\mathrm{Filt}(\mathbf{Nc}(Q))$.
\end{prop*}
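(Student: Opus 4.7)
The strategy is to exploit the fact that $\bK Q$ is hereditary and of finite representation type, so that $\cD(\bK Q)$ is pure-semisimple (see \cref{GenRem}). In particular, every object $X \in \cD(\bK Q)$ splits as $X \cong \bigoplus_{n\in\bZ} H^n(X)[-n]$, and the whole classification problem reduces to cohomology-wise data. I would therefore build the bijection by extracting, from an aisle $\cU$, its ``slice'' at each cohomological degree.

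\textbf{Forward direction.} Given $\cU \in \Aisle(\cD(\bK Q))$, define for each $n\in\bZ$
\[\cT_n(\cU) := \leftcurly M\in\mods(\bK Q)\,\mid\, M[-n]\in\cU\rightcurly.\]
Closure of $\cU$ under $[1]$ immediately gives $\cT_n(\cU)\subseteq\cT_{n-1}(\cU)$, so the assignment $n\mapsto\cT_n(\cU)$ is a non-increasing sequence. The key step is to identify each $\cT_n(\cU)$ with an element of $\mathbf{Nc}(Q)$. For this I would use the hereditary property: for any morphism $f:M\to N$ in $\mods(\bK Q)$ the cone of $f[-n]$ in $\cD(\bK Q)$ splits as $\coker(f)[-n]\oplus\ker(f)[-n+1]$ (since $\Ext^{\geq 2}=0$ forces 2-term complexes to decompose). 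Combined with closure of $\cU$ under extensions, coproducts, and direct summands, this yields the following closure properties of $\cT_n(\cU)$: closure under direct sums, summands, extensions, and cokernels of internal morphisms, together with the compatibility $\ker(f)\in\cT_{n-1}(\cU)$. Via the Ingalls-Thomas correspondence (identifying $\mathbf{Nc}(Q)$ with the lattice of wide subcategories of $\mods(\bK Q)$, and also with the lattice of torsion classes), the data of the filtration $\cT_\bullet(\cU)$ descends to a non-increasing map $\phi_\cU:\bZ\to\mathbf{Nc}(Q)$.

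\textbf{Backward direction.} Given $\phi\in\mathrm{Filt}(\mathbf{Nc}(Q))$, let $\cT_n(\phi)\subseteq\mods(\bK Q)$ be the subcategory corresponding to $\phi(n)$ under Ingalls-Thomas (with the interpretation compatible with the previous step), and set
\[\cU_\phi := \leftcurly X\in\cD(\bK Q)\,\mid\, H^n(X)\in\cT_n(\phi)\text{ for all }n\in\bZ\rightcurly.\]
Using the pure-semisimplicity splitting and the closure properties of each $\cT_n(\phi)$, I would verify that $\cU_\phi$ is closed under positive shifts (this uses $\phi(n+1)\leq\phi(n)$), direct sums, summands, and extensions; by \cite[Theorem 3.4]{AJSo} this suffices for $\cU_\phi$ to be an aisle.

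\textbf{Inversion and order-preservation.} The splitting $X\cong\bigoplus H^n(X)[-n]$ and closure of $\cU$ under summands give $X\in\cU$ if and only if $H^n(X)\in\cT_n(\cU)$ for all $n$, so $\cU=\cU_{\phi_\cU}$; the identity $\phi=\phi_{\cU_\phi}$ is immediate from the definitions. Both assignments are visibly order-preserving.

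\textbf{Main obstacle.} The delicate point is the precise identification of $\cT_n(\cU)$ as an element of $\mathbf{Nc}(Q)$. The subcategory $\cT_n(\cU)$ is not in general itself a torsion class nor a wide subcategory of $\mods(\bK Q)$ --- its exact closure properties are a blend (closed under cokernels but in general not under kernels, closed under quotients only modulo passing to $\cT_{n-1}(\cU)$). What saves the situation is precisely the compatibility with the neighbouring slice $\cT_{n-1}(\cU)$, and I expect the cleanest formulation to classify $\cT_n(\cU)$ \emph{relative} to $\cT_{n-1}(\cU)$, from which the Ingalls-Thomas bijection yields a well-defined noncrossing partition. Pinning down this correspondence and verifying that every decreasing $\phi$ arises this way is where the finite-representation-type hypothesis will be most essential.
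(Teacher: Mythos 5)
Your strategy is the right one and mirrors the paper's up to a point: both use the pure-semisimple splitting $X\cong\bigoplus_n H^n(X)[-n]$, both extract a decreasing sequence of cohomological slices $\cT_n(\cU)=\{M\mid M[-n]\in\cU\}$, and you correctly diagnose the closure properties of these slices (they are what the paper calls ICE-closed: closed under images, cokernels, extensions; with kernels of internal morphisms landing one step down, in $\cT_{n-1}(\cU)$). The paper works with $\mathrm{Susp}(\cD^c(\bK Q))$ rather than $\Aisle(\cD(\bK Q))$ directly, but these are in bijection by \cite[Theorem 4.5(i)]{SP}, so that is a cosmetic difference.

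The genuine gap is the one you flag yourself in your ``Main obstacle'': you do not supply the mechanism that turns an ICE-closed slice into a noncrossing partition. Without it the forward map is not defined, so the argument does not close. The missing ingredient is the $\alpha$-construction of Ingalls--Thomas,
\[\alpha(\cH)=\leftcurly M\in\cH\mid\text{for all }f\in\Hom(\cH,M),\ \ker f\in\cH\rightcurly,\]
which produces a \emph{wide} subcategory from any ICE-closed subcategory $\cH$ (Ingalls--Thomas state this for torsion classes; the paper extends it to ICE-closed subcategories in \cref{ITRem}). Your instinct that the classification should be ``relative to $\cT_{n-1}(\cU)$'' is not quite right: the wide subcategory $\cW_n=\alpha(\cT_n(\cU))$ is extracted absolutely, and it is only the inverse construction that is relative, recovering the slice as $\gen(\cW_n)\cap\cW_{n-1}$. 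The nontrivial content is then the pair of identities
\[H^n(\cS)=\gen\bigl(\alpha(H^n(\cS))\bigr)\cap\alpha\bigl(H^{n-1}(\cS)\bigr)\quad\text{and}\quad\cW_n=\alpha\bigl(\gen(\cW_n)\cap\cW_{n-1}\bigr),\]
whose verification is exactly \cref{FiltWide}; note also that showing the inverse assignment lands in suspended subcategories (closure under extensions) already requires the second identity, so the two directions are intertwined rather than ``immediate from the definitions'' as your proposal asserts.
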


In the following, $R$ will be a commutative noetherian ring and $Q$ a Dynkin quiver. We introduce a class of $RQ$-modules which, in a sense, play the role of the indecomposables in $\Mods(\bK Q)$.

\begin{defn}
    An $RQ$-module is called (\textit{free}) \textit{lattice} if it is vertexwise finitely generated projective (resp. free) over $R$. A lattice $X$ is called \textit{rigid} if $\Ext_{RQ}^1(X,X)=0$ and \textit{exceptional} if it is rigid and $\mathrm{End}_{RQ}(X)\cong R$. A lattice $X$ is said to have a \textit{rank vector} $d\in\bN^{\mid Q_0\mid}$ if for every $i\in Q_0$ and $\p\in\Spec(R)$ there is an isomorphism $\kp\otimes_R X_i\cong\kp^{d_i}$. Obviously, a free lattice has always a rank vector $d$ with $d_i=\rank(X_i)$.
\end{defn}

\begin{rem}\label{LatComp}
    By \cite[Lemma 3.1 (i)]{CB}, $RQ$-lattices have projective dimension less or equal then $1$. In particular, stalk complexes of $RQ$-lattices are compact in $\cD(RC)$. Moreover, by \cref{SmartRes}, they are $R$-K-projective. Thus, by \cref{DerAct}, any time the functor $\otimesL$ is applied to an $RQ$-lattice, it will be computed as the standard vertexwise tensor product.
\end{rem}

In view of the fact that, for any field $\bK$, the indecomposable $\bK Q$-modules are in bijection with the real Schur roots of $Q$, we can restate \cite[Theorem B]{CB} as follows. 
\begin{thm}\label{LatLift}
    For any field $\bK$, commutative ring $R$ and Dynkin quiver $Q$, there exists an indecomposable $\bK Q$-module $L$ with dimension vector $d$ if and only if there exists an exceptional $RQ$-lattice $X$ with rank vector $d$. In this case, there exists a unique exceptional free lattice $\widetilde{L}$ with rank vector $d$, and any rigid $RQ$-lattice of rank vector $d$ is exceptional.
\end{thm}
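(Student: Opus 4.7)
The plan is to translate Crawley-Boevey's Theorem B of \cite{CB} into the language of this section. That theorem (stated for arbitrary finite quivers and arbitrary commutative rings) asserts that the real Schur roots of $Q$ parametrize the exceptional $RQ$-lattices up to isomorphism of free representatives, and that every rigid $RQ$-lattice whose rank vector is a real Schur root is automatically exceptional. Thus the work to be done is simply to match vocabularies: to show that for a Dynkin quiver the ``real Schur roots'' appearing in \cite{CB} are exactly the dimension vectors of indecomposable $\bK Q$-modules, independently of the field $\bK$.

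The first step is Gabriel's theorem: for a Dynkin quiver $Q$ and any field $\bK$, the assignment $L\mapsto\dim_\bK L$ induces a bijection between indecomposable $\bK Q$-modules and positive roots of the root system underlying $Q$. In particular the set of such dimension vectors is the same for every $\bK$. The second step is to observe that in the Dynkin (equivalently, finite representation type hereditary) setting every indecomposable $L$ is both a brick, i.e. $\mathrm{End}_{\bK Q}(L)\cong\bK$, and rigid, i.e. $\Ext^1_{\bK Q}(L,L)=0$; hence every positive root is a real Schur root. Conversely real Schur roots are in particular positive roots, so the two notions coincide.

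Combining these two facts with Crawley-Boevey's theorem yields the three assertions at once. Existence of an indecomposable $\bK Q$-module with dimension vector $d$ is equivalent to $d$ being a positive, hence real Schur, root, which by \cite[Theorem B]{CB} is equivalent to the existence of an exceptional $RQ$-lattice with rank vector $d$. The same reference provides, for each such $d$, a unique free exceptional $RQ$-lattice $\widetilde{L}$ with $\rank(\widetilde{L}_i)=d_i$, and ensures that any rigid lattice with this rank vector is exceptional.

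The main obstacle is not mathematical but bibliographic: one must verify that Crawley-Boevey's hypotheses on $Q$ and on $R$ cover the Dynkin/commutative-noetherian setting used here, and that the notions of \emph{lattice}, \emph{free lattice}, \emph{rigid} and \emph{exceptional} in this paper agree on the nose with the ones employed in \cite{CB}. Once this dictionary is fixed, the proof reduces to citing \cite[Theorem B]{CB} together with Gabriel's classification of indecomposables over a Dynkin quiver.
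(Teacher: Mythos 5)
Your proposal is correct and takes essentially the same route as the paper: the paper presents \cref{LatLift} as a direct restatement of \cite[Theorem B]{CB}, remarking only that for any field $\bK$ the indecomposable $\bK Q$-modules are in bijection with the real Schur roots of $Q$. You have simply filled in the justification the paper leaves implicit, namely that by Gabriel's theorem the dimension vectors of indecomposables are the positive roots, and that over a Dynkin quiver every indecomposable is a rigid brick, so positive roots and real Schur roots coincide.
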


We will denote by $\widetilde{\quad}:\ind(\bK Q)\to\mathrm{lat}(RQ)$ the assignment in the theorem and we will refer to it as \textit{lattice lift}. Since the indecomposable objects of $\cD(\bK Q)$ are shifts of indecomposable $\bK Q$-modules, given a class of objects $\cX\subseteq\cD(\bK Q)$ closed under summands, by abuse of notation, we write
\[\widetilde{\cX}=\leftcurly\widetilde{L}[n]\mid L\in\ind(\bK Q)\text{ and }L[n]\in\cX\rightcurly\]
for the lattice lift of the indecomposables in $\cX$.

\begin{rem}\label{LatLiftRem}
We can actually give a concrete description of the lattice lifts. Indeed, by \cite{Rin}, for any field $\bK$, the exceptional $\bK Q$-modules are tree modules, i.e. they admit a presentation by $0,1$-matrices on the arrows. This applies to the indecomposable modules over Dynkin quivers, thus for an indecomposable $\bK Q$-module $L$ of dimension vector $d$, we can construct $\widetilde{L}$ defining $\widetilde{L}_i=R^{d_i}$ and $\widetilde{L}_\alpha=L_\alpha$ for any $i\in Q_0$ and $\alpha\in Q_1$. In this way, we obtain a free $RQ$-lattice which, by \cite[Lemma 3.1 (iv)]{CB}, is rigid and so, by previous theorem, exceptional. Thus, it is the unique lattice lift of $L$.
\end{rem}

The following proposition shows that the $\Hom$-orthogonality of two indecomposable $\bK Q$-modules is sufficient to detect the $\Hom$-orthogonality of their lattice lifts.
\begin{prop}\label{OrthProp}
    For any indecomposable $\bK Q$-modules $M$ and $L$, it holds that
    \begin{enumerate}
         \item $\Hom_{RQ}\left(\widetilde{M},\widetilde{L}\right)=0$ if and only if $\Hom_{\bK Q}(M,L)=0$;
        \item $\Ext_{RQ}^1\left(\widetilde{M},\widetilde{L}\right)=0$ if and only if $\Ext_{\bK Q}^1(M,L)=0$.
    \end{enumerate}
\begin{proof}
    By \cite[Theorem 4.1]{CB}, both the $R$-modules $\Hom_{RQ}\left(\widetilde{M},\widetilde{L}\right)$ and $\Ext_{RQ}^1\left(\widetilde{M},\widetilde{L}\right)$ are finitely generated projective of constant rank equal to the dimension over $\bK$ of the vector spaces $\Hom_{\bK Q}(M,L)$ and $\Ext_{\bK Q}(M,L)$ respectively. Thus, both point 1 and 2 hold.
\end{proof}
\end{prop}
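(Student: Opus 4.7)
The plan is to reduce both equivalences to an almost immediate consequence of \cite[Theorem 4.1]{CB}. First, I would note that by \cref{LatLift} (with the explicit construction recalled in \cref{LatLiftRem}), both $\widetilde{M}$ and $\widetilde{L}$ are exceptional free $RQ$-lattices, so they fall within the hypotheses of that theorem. Applying it yields that the two $R$-modules
\[
\Hom_{RQ}(\widetilde{M}, \widetilde{L}) \quad \text{and} \quad \Ext_{RQ}^1(\widetilde{M}, \widetilde{L})
\]
are finitely generated projective of constant rank, equal respectively to $\dim_\bK \Hom_{\bK Q}(M, L)$ and $\dim_\bK \Ext_{\bK Q}^1(M, L)$.

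The second step is the standard observation that a finitely generated projective $R$-module of constant rank is zero if and only if its rank is zero. One direction is immediate; for the other, rank zero means every localization $P_\p$ vanishes, and a finitely generated module with empty support must itself be zero. Combined with the rank identities above, this yields both (1) and (2) at once: the $R$-modules on the left vanish if and only if their rank vanishes if and only if the $\bK$-vector spaces on the right vanish.

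The main work is concentrated in \cite[Theorem 4.1]{CB}, whose argument proceeds via a short projective $RQ$-resolution of the lattice $\widetilde{M}$ together with a base-change comparison between the resulting Hom-complexes over $R$ and over $\bK$. If one preferred a self-contained proof, the natural route would be to write down such a resolution, apply $\Hom_{RQ}(-, \widetilde{L})$ to obtain a two-term complex of finitely generated free $R$-modules whose cohomology computes $\Hom$ and $\Ext^1$, and then use flatness of the relevant modules to identify the base change of this complex along $R \to \bK$ with the analogous one over the field. That base-change identification is the only step with any real content; everything else is a formal consequence.
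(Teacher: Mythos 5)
Your proof is correct and follows essentially the same route as the paper: both argue by invoking \cite[Theorem 4.1]{CB} to identify $\Hom_{RQ}(\widetilde{M},\widetilde{L})$ and $\Ext^1_{RQ}(\widetilde{M},\widetilde{L})$ as finitely generated projective $R$-modules of constant rank equal to $\dim_\bK\Hom_{\bK Q}(M,L)$ and $\dim_\bK\Ext^1_{\bK Q}(M,L)$ respectively, and then conclude. You additionally spell out the (correct, but implicit in the paper) observation that a finitely generated projective module of constant rank zero vanishes.
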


\begin{cor}\label{OrthCor}
    For any two indecomposable $\bK Q$-complexes $M$ and $L$:
    \[\Hom_{\cD(RQ)}\left(\widetilde{M},\widetilde{L}\right)=0\text{ if and only if }\Hom_{\cD(\bK Q)}(M,L)=0\]
    In particular, for any class $\cX\subseteq\cD(\bK Q)$, it holds that $\widetilde{\cX^\perp}\subseteq\widetilde{\cX}^\perp$. 
\begin{proof}
    Let $M=M'[m]$ and $L=L'[\ell]$, where $M'$ and $L'$ are two indecomposable $\bK Q$-modules and $m,\ell\in\bZ$. There are standard isomorphisms
    \[\Hom_{\cD(RQ)}\left(\widetilde{M},\widetilde{L}\right)\cong\Ext_{RQ}^{\ell-m}\left(\widetilde{M'},\widetilde{L'}\right)\text{ and }\Hom_{\cD(\bK Q)}(M,L)\cong\Ext_{\bK Q}^{\ell-m}(M',L')\]
    By \cref{OrthProp}, the statement holds for $\ell-m=0,1$. Moreover, by \cref{LatComp} and since $\bK Q$ is hereditary, both $\Ext^{\ell-m}$ vanishes for $\ell-m<0$ and $\ell-m\geq2$. The particular part follows by taking $M\in\cX$ and $L\in\cX^\perp$.
\end{proof}
\end{cor}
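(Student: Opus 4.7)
The plan is to reduce the derived Hom vanishing to a module-level Ext statement and then invoke \cref{OrthProp}. Since $M$ and $L$ are indecomposable objects of $\cD(\bK Q)$, and $\bK Q$ is hereditary, I may write $M\cong M'[m]$ and $L\cong L'[\ell]$ for indecomposable $\bK Q$-modules $M'$, $L'$ and integers $m,\ell\in\bZ$. Setting $n=\ell-m$, the standard identifications
\[\Hom_{\cD(\bK Q)}(M,L)\cong\Ext^{n}_{\bK Q}(M',L'),\qquad \Hom_{\cD(RQ)}\bigl(\widetilde{M},\widetilde{L}\bigr)\cong\Ext^{n}_{RQ}\bigl(\widetilde{M'},\widetilde{L'}\bigr)\]
are valid in both cases; on the $RQ$-side this uses \cref{LatComp}, which tells us that $RQ$-lattices are compact and have projective dimension at most $1$, so that a projective resolution of $\widetilde{M'}$ of length $1$ computes both sides.

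Next I would split on the value of $n$. For $n=0$ or $n=1$, the equivalence of vanishings is exactly the content of \cref{OrthProp} (1) and (2), applied to the indecomposable modules $M'$ and $L'$. For $n<0$, both sides are automatically zero (Ext in negative degrees vanishes for modules). For $n\geq 2$, both sides are again zero: on the left because $\bK Q$ is hereditary (so $\Ext^{\geq 2}_{\bK Q}$ vanishes) and on the right because, by \cref{LatComp}, $\widetilde{M'}$ admits a projective resolution of length $\leq 1$, so $\Ext^{\geq 2}_{RQ}\bigl(\widetilde{M'},\widetilde{L'}\bigr)=0$. In every case the vanishing of $\Hom_{\cD(RQ)}\bigl(\widetilde{M},\widetilde{L}\bigr)$ is equivalent to the vanishing of $\Hom_{\cD(\bK Q)}(M,L)$.

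For the ``in particular'' statement, I would argue as follows. Given a class $\cX\subseteq\cD(\bK Q)$, take any $L\in\cX^\perp$; I want to show $\widetilde{L}\in\widetilde{\cX}^{\perp}$. Since $\widetilde{\cX}$ is generated by lattice lifts of indecomposable complexes $M$ with $M\in\cX$, and for each such $M$ we have $\Hom_{\cD(\bK Q)}(M,L)=0$, the first part of the corollary gives $\Hom_{\cD(RQ)}\bigl(\widetilde{M},\widetilde{L}\bigr)=0$. Summing over all such $M$ yields the desired orthogonality.

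There is no real obstacle here: all the representation-theoretic content has already been absorbed into \cref{OrthProp} via Crawley-Boevey's rank computation, and the present argument is just the formal shift/heredity bookkeeping needed to lift from modules to unbounded complexes. The only subtlety worth double-checking is that the $\Ext^{1}$ identification on the $RQ$-side really does compute $\Hom_{\cD(RQ)}$ in the unbounded derived category, which is ensured by the projective dimension bound in \cref{LatComp}.
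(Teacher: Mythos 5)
Your proposal is correct and follows essentially the same route as the paper: the same decomposition $M\cong M'[m]$, $L\cong L'[\ell]$, the same identification of derived Hom groups with module-level $\Ext^{\ell-m}$, the same invocation of \cref{OrthProp} for degrees $0$ and $1$, and the same appeal to \cref{LatComp} together with heredity of $\bK Q$ to dispatch the remaining degrees. The only cosmetic difference is that you enumerate the cases $n<0$ and $n\geq 2$ separately and swap ``left''/``right'' relative to the statement's display, but the mathematical content is identical.
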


\subsection{Compactly generated t-structures}
In this section we will introduce two assignments, linking the compactly generated aisles of $\cD(RQ)$ to the aisles of $\cD(\bK Q)$, which will lead to the complete classification of the former. In view of \cref{FiltNc}, we can refer to the poset $\Aisle(\cD(\bK Q))$ without specifying any field. The assignments are the following

\begin{equation}\begin{gathered}\label{AisleAss}
    \Aisle_\mathrm{cg}(\cD(RQ))\longleftrightarrow\Hom_\mathrm{Pos}(\Spec(R),\Aisle(\cD(\bK Q))) \\
    \varphi:\cU\longmapsto\left(\p\mapsto\aisle_{\bK Q}\leftangle  L\in\ind(\cD(\bK Q))\mid R/\p\otimesL\widetilde{L}\in\cU\rightangle\right) \\
    \aisle_{RQ}\leftangle R/\p\otimesL\widetilde{\sigma(\p)}\mid\p\in\Spec(R)\rightangle\longmapsfrom\,\sigma:\psi
\end{gathered}\end{equation}

\begin{lem}\label{AisleLift}
    For any ideals $\a$ and $\b$ and indecomposable $\bK Q$-complex $L$, it holds that:
    \begin{enumerate}
        \item If $\a\subseteq\b$ then $\aisle_{RQ}\leftangle R/\b\otimesL\widetilde{L}\rightangle\subseteq\aisle_{RQ}\leftangle R/\a\otimesL\widetilde{L}\rightangle$;
        \item If $\mathrm{rad}(\a)=\mathrm{rad}(\b)$ then $\aisle_{RQ}\leftangle R/\a\otimesL\widetilde{L}\rightangle=\aisle_{RQ}\leftangle R/\b\otimesL\widetilde{L}\rightangle$;
        \item $\aisle_{RQ}\leftangle R/\a\b\otimesL\widetilde{L}\rightangle=\aisle_{RQ}\leftangle R/\a\otimesL\widetilde{L}, R/\b\otimesL\widetilde{L}\rightangle$;
        \item $\aisle_{RQ}\leftangle R/\a\otimesL\widetilde{L}\rightangle=\aisle_{RQ}\leftangle K(\a)\otimesL\widetilde{L}\rightangle$ and it is compactly generated.
    \end{enumerate}
\begin{proof}
    (1) Let $\a\subseteq\b$, note that, similarly to \cref{ClosureLem}, the class
    \[\mathcal{M}=\leftcurly M\in\cD(R)\mid M\otimesL\widetilde{L}\in\aisle_{RQ}\leftangle R/\a\otimesL\widetilde{L}\rightangle\rightcurly\]
    is a cocomplete suspended subcategory containing $R/\a[0]$. By \cite[Proposition 2.4 (1)]{AJS}, $R/\b\in\aisle_R\leftangle R/\a\rightangle$, so $\mathcal{M}$ contains $R/\b[0]$.
    
    \noindent (2) Let $\mathrm{rad}(\a)=\mathrm{rad}(\b)$, as before, the class
    \[\mathcal{M'}=\leftcurly M\in\cD(R)\mid M\otimesL\widetilde{L}\in\aisle_{RQ}\leftangle R/\b\otimesL\widetilde{L}\rightangle\rightcurly\]
    it is a cocomplete suspended subcategory containing $R/\b[0]$ and, by \cite[Proposition 2.4 (4)]{AJS}, $\aisle_R\leftangle R/\a\rightangle=\aisle_R\leftangle R/\b\rightangle$. So, $R/\b[0]\in\mathcal{M}$ and $R/\a[0]\in\mathcal{M'}$ and the thesis follows.
    
    \noindent (3) The proof follows analogously to (2) since, by \cite[Proposition 2.4 (2)]{AJS}, $\aisle_R\leftangle R/\a\b\rightangle=\aisle_R\leftangle R/\a,R/\b\rightangle$.
    
    \noindent (4) The proof follows analogously to (2) since, by \cite[Lemma 5.3 (iii)]{Hrb}, $\aisle_R\leftangle R/\a\rightangle=\aisle_R\leftangle K(\a)\rightangle$. Moreover, since the complex $K(\a)$ is compact, by \cref{DerAct}, the functor $K(\a)\otimesL\_$ has a coproduct preserving right adjoint, and thus it preserves compactness by \cite[Theorem 5.1]{Nee}. Since, by \cref{LatComp}, any $RQ$-lattice is compact, the complex $K(\a)\otimesL\widetilde{L}$ is compact in $\cD(RQ)$.
\end{proof}
\end{lem}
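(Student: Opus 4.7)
The plan is to reduce each assertion about aisles in $\cD(RQ)$ to the corresponding, already-classified statement about the standard aisle $\aisle_R\langle R/\a\rangle$ in $\cD(R)$, via a pullback technique modelled on \cref{ClosureLem}. Concretely, for any cocomplete suspended subcategory $\cU \subseteq \cD(RQ)$, I would consider the class
\[\mathcal{M}_\cU := \leftcurly M \in \cD(R) \mid M \otimesL \widetilde{L} \in \cU \rightcurly\]
which is itself cocomplete and suspended in $\cD(R)$, since the functor $(-) \otimesL \widetilde{L} : \cD(R) \to \cD(RQ)$ is triangulated and preserves coproducts; in particular, if $\cU$ is an aisle, so is $\mathcal{M}_\cU$. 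Hence, to certify that $M \otimesL \widetilde{L}$ lies in an aisle generated by $N \otimesL \widetilde{L}$, it suffices to verify the analogue $M \in \aisle_R\langle N \rangle$ inside $\cD(R)$.

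With this device in hand, parts (1)--(3) each reduce directly to the corresponding items of \cite[Proposition~2.4]{AJS}: item~(1) gives $R/\b \in \aisle_R\langle R/\a\rangle$ whenever $\a \subseteq \b$; item~(4) gives the radical-invariance $\aisle_R\langle R/\a\rangle = \aisle_R\langle R/\b\rangle$ when $\mathrm{rad}(\a) = \mathrm{rad}(\b)$; and item~(2) gives $\aisle_R\langle R/\a\b\rangle = \aisle_R\langle R/\a, R/\b\rangle$. In each case, pulling the relevant containment or equality back along $(-) \otimesL \widetilde{L}$ yields the statement in $\cD(RQ)$; for (3) I would verify both inclusions separately, with the ``$\supseteq$'' direction additionally using that $\a\b \subseteq \a$ and $\a\b \subseteq \b$ to place $R/\a$ and $R/\b$ inside $\aisle_R\langle R/\a\b\rangle$.

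For part~(4), the equality $\aisle_{RQ}\langle R/\a \otimesL \widetilde{L}\rangle = \aisle_{RQ}\langle K(\a) \otimesL \widetilde{L}\rangle$ follows by the same pullback applied to \cite[Lemma~5.3(iii)]{Hrb}, which asserts $\aisle_R\langle R/\a\rangle = \aisle_R\langle K(\a)\rangle$. The remaining, and I expect the most delicate, point is compact generation: one must exhibit $K(\a) \otimesL \widetilde{L}$ as a compact object of $\cD(RQ)$. My plan is to observe that $K(\a)$ is a bounded complex of finitely generated free $R$-modules, hence perfect in $\cD(R)$, so by \cref{DerAct}(2) the right adjoint $\RHom_R(K(\a), -)$ of $K(\a) \otimesL (-)$ preserves coproducts; Neeman's theorem \cite[Theorem~5.1]{Nee} then ensures that $K(\a) \otimesL (-) : \cD(RQ) \to \cD(RQ)$ preserves compact objects, which combined with the compactness of $\widetilde{L}$ from \cref{LatComp} yields the desired conclusion.
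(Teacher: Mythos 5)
Your proof is correct and follows essentially the same route as the paper: pull each assertion back along the triangulated, coproduct-preserving functor $(-)\otimesL\widetilde{L}$ by forming the cocomplete suspended class $\mathcal{M}_\cU$, invoke the corresponding items of \cite[Proposition 2.4]{AJS} and \cite[Lemma 5.3(iii)]{Hrb} for the commutative case, and then deduce compactness of $K(\a)\otimesL\widetilde{L}$ from the perfectness of $K(\a)$, the adjunction of \cref{DerAct}, and Neeman's criterion. The only inessential extra remark is that $\mathcal{M}_\cU$ need not itself be an aisle for the argument to work; it suffices (as you also note) that it be a cocomplete suspended subcategory so that it contains $\aisle_R\langle N\rangle$ whenever it contains $N$.
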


\begin{prop}\label{InjAss}
The assignments \ref{AisleAss} are well defined and the composite $\varphi\circ\psi$ gives the identity on the poset $\Hom_\mathrm{Pos}(\Spec(R),\Aisle(\cD(\bK Q)))$. In particular, the assignment $\psi$ is injective.
\begin{proof}
    Clearly the assignment $\varphi$ gives a map from $\Spec(R)$ to $\Aisle(\bK Q)$ and, by \cref{AisleLift} (1), it is also a homomorphism of poset. Indeed, for any two prime ideals $\p\subseteq\q$, there is a containment
    \[\aisle_{\bK Q}\leftangle  L\mid R/\p\otimesL\widetilde{L}\in\cU\rightangle\subseteq\aisle_{\bK Q}\leftangle  L\mid R/\q\otimesL\widetilde{L}\in\cU\rightangle\]
    As for the assignment $\psi$, by \cref{AisleLift} (4), we have the equality
    \[\aisle_{RQ}\leftangle R/\p\otimesL\widetilde{\sigma(\p)}\rightangle=\aisle_{RQ}\leftangle K(\p)\otimesL\widetilde{\sigma(\p)}\rightangle\]
    and $K(\p)\otimesL\widetilde{\sigma(\p)}$ is compact, yielding a compactly generated aisle of $\cD(RQ)$.
    
    \noindent Denoting $\varphi\circ\psi(\sigma)$ by $\sigma'$, for a prime ideal $\p$ we have that
    \[\sigma'(\p)=\aisle_{\bK Q}\leftangle L\mid R/\p\otimesL\widetilde{L}\in\aisle_{RQ}\leftangle R/\q\otimesL\widetilde{\sigma(\q)}\mid\q\in\Spec(R)\rightangle\rightangle\]
    Obviously, $\sigma(\p)\subseteq\sigma'(\p)$. Let $L$ be a generator of $\sigma'(\p)$, by \cref{AisleLift} (4), we have that
    \[K(\p)\otimesL\widetilde{L}\in\aisle_{RQ}\leftangle K(\q)\otimesL\widetilde{\sigma(\q)}\mid\q\in\Spec(R)\rightangle\]
    By \cref{GenRem}, $K(\p)\otimesL\widetilde{L}$ is constructed trough extensions, positive shifts, direct summands and coproducts, from  $\leftcurly K(\q)\otimesL\widetilde{\sigma(\q)}\mid\q\in\Spec(R)\rightcurly$. Since all these operations commute with the base change $\kp\otimesL\_$, it holds that
    \[\kp\otimesL\left(K(\p)\otimesL\widetilde{L}\right)\in\aisle_{\kp Q}\leftangle\kp\otimesL\left(K(\q)\otimesL\widetilde{\sigma(\q)}\right)\mid\q\in\Spec(R)\rightangle\]
    then, by \cref{Koszul}, it follows that
    \[\kp\otimesL\widetilde{L}\in\aisle_{\kp Q}\leftangle\kp\otimesL\widetilde{\sigma(\q)}\mid\q\subseteq\p\rightangle\]
    Since aisles are independent of the field, we can conclude that $L\in\aisle_{\bK Q}\leftangle\sigma(\q)\mid\q\subseteq\p\rightangle$ and so, since $\sigma$ is a poset homomorphism, $L$ lies in $\sigma(\p)$.
\end{proof}
\end{prop}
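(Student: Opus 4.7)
The plan is to check well-definedness of both assignments and then verify $\varphi \circ \psi = \mathrm{id}$, from which injectivity of $\psi$ is immediate.

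Well-definedness of $\varphi$ is routine: each $\varphi(\cU)(\p)$ is $\aisle_{\bK Q}$ of a set, hence an aisle. For order preservation, if $\p \subseteq \q$ and $R/\p \otimesL \widetilde{L} \in \cU$, then \cref{AisleLift}(1) gives $R/\q \otimesL \widetilde{L} \in \aisle_{RQ}\langle R/\p \otimesL \widetilde{L}\rangle \subseteq \cU$, so the generating set at $\p$ is contained in the one at $\q$. For $\psi$, the equivalence in \cref{AisleLift}(4) rewrites the generators as $K(\p) \otimesL \widetilde{\sigma(\p)}$, each of which is compact because $K(\p)$ is compact in $\cD(R)$ and $\widetilde{L}$ is compact in $\cD(RQ)$ by \cref{LatComp}, while \cref{DerAct} ensures tensoring with a compact preserves compactness.

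For the identity $\varphi \circ \psi = \mathrm{id}$, write $\sigma' := \varphi(\psi(\sigma))$. The inclusion $\sigma(\p) \subseteq \sigma'(\p)$ is immediate from the definitions. For the reverse direction, given an indecomposable $L$ with $R/\p \otimesL \widetilde{L} \in \psi(\sigma)$, first replace $R/\p$ by $K(\p)$ via \cref{AisleLift}(4), so that $K(\p) \otimesL \widetilde{L}$ lies in the aisle generated by $\{K(\q) \otimesL \widetilde{\sigma(\q)} \mid \q \in \Spec(R)\}$. I would then apply the base-change functor $\kp \otimesL \_$: being triangulated and coproduct-preserving, and in view of the explicit description of aisles from \cref{GenRem} (extensions, positive shifts, direct summands, coproducts), it sends aisles to aisles.

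Now the Koszul computation in \cref{Koszul} drastically simplifies both sides: $\kp \otimesL K(\q) = 0$ unless $\q \subseteq \p$, and otherwise it is a direct sum of shifts of $\kp$. Thus a nonzero multiple of $\kp \otimesL \widetilde{L}$ lies in $\aisle_{\kp Q}\langle \kp \otimesL \widetilde{\sigma(\q)} \mid \q \subseteq \p\rangle$. Combined with the explicit description of lattice lifts in \cref{LatLiftRem} (so that $\kp \otimesL \widetilde{L}$ is the indecomposable $\kp Q$-module with the same dimension vector and $0,1$-matrices as $L$) and with the field-independence supplied by \cref{FiltNc}, this containment translates back to $L \in \aisle_{\bK Q}\langle \sigma(\q) \mid \q \subseteq \p\rangle$. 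Since $\sigma$ is order-preserving, each such $\sigma(\q)$ sits inside $\sigma(\p)$, giving $L \in \sigma(\p)$. The main subtlety is precisely this transfer of scalars, which is exactly what \cref{FiltNc} makes legitimate; injectivity of $\psi$ follows formally from $\varphi \circ \psi = \mathrm{id}$.
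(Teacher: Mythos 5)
Your proof is correct and follows essentially the same route as the paper: well-definedness via \cref{AisleLift}(1) and (4), then checking $\varphi\circ\psi=\mathrm{id}$ by applying $\kp\otimesL\_$ to the aisle-membership relation, simplifying the Koszul complexes using \cref{Koszul}, and transferring back to $\bK$ via field-independence (\cref{FiltNc}). The only cosmetic difference is your phrasing that $\kp\otimesL\_$ ``sends aisles to aisles''--strictly, the point is that the image of a generating set built up via extensions, positive shifts, summands, and coproducts lands in the aisle generated by the image of the generators, which is exactly how the paper words it.
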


In what follows we want to get a better understanding of the aisles of $\cD(RQ)$ reached by the assignment $\psi$ in \ref{AisleAss}. In particular, we will find a special family of compactly generated aisles from which all the other aisles are built from.
\begin{thm}\label{ElemAisle}
    For any specialization closed subset $V$ of $\Spec(R)$ and aisle $\cX$ of $\cD(\bK Q)$, there is a compactly generated t-structure
    \[\left(\cA_V^\cX,\cV_V^\cX\right):=\left(\aisle_{RQ}\leftangle R/\p\otimesL\widetilde{\cX}\mid\p\in V\rightangle,\leftcurly Y\in\cD(RQ)\mid\Gamma_V Y\in\widetilde{\cX}^\perp\rightcurly\right)\]
    Moreover, $\cA_V^\cX=\aisle_{RQ}\leftangle\widetilde{\cX}\rightangle\cap\supp_R^{-1}(V)$ and the associated truncation functor is $\tau_\cX^{<}\circ\Gamma_{V}$, where $\tau_\cX^{<}$ is the truncation functor of $\aisle_{RQ}\leftangle\widetilde{\cX}\rightangle$.
\begin{proof}
    Let $\cU_V^\cX=\aisle_{RQ}\leftangle\widetilde{\cX}\rightangle\cap\supp_R^{-1}(V)$, then we will prove that
    \begin{equation*}\tag{$\star$}\label{equalities}
        \cA_V^\cX=\cU_V^\cX={}^\perp\cV_V^\cX \,\text{ and }\, {\cA_V^\cX}^\perp={\cU_V^\cX}^\perp=\cV_V^\cX
    \end{equation*}
    The proof is divided into three steps:
    
    \noindent (i) We start proving that $\cA_V^\cX\subseteq\cU_V^\cX$. Fix a prime ideal $\p$ in $V$. By \cref{ClosureProp} (1), we have that
    \[R/\p\otimesL\widetilde{\cX}\subseteq\aisle_{RQ}\leftangle\widetilde{\cX}\rightangle\]
    In addition, since $R_\q\otimesL R/\p\neq0$ if and only if $\q\in V(\p)$, for any $\widetilde{L}$ in $\widetilde{\cX}$ and $\q\notin V(\p)$, it follows that
    \[\Gamma_\q(R/\p\otimesL\widetilde{L})=\Gamma_{V(\q)}\left((R_\q\otimesL R/\p)\otimesL\widetilde{L}\right)=0\]
    Thus we can conclude that $\supp_R(R/\p\otimesL\widetilde{L})\subseteq V(\p)$ which is contained in $V$.
    
    \noindent (ii) We show that $\Hom_{\cD(RQ)}\left(\cU_V^\cX,\cV_V^\cX\right)=0$. Let $X$ and $Y$ be complexes respectively in $\cU_V^\cX$ and $\cV_V^\cX$. If there exists a morphism $f:X\to Y$, then the functor $\Gamma_V$ induce the commutative square
    \[\begin{tikzcd} \Gamma_V X \arrow[r,"\gamma_X"] \arrow[d,"\Gamma_V f"] & X \arrow[d,"f"] \\ \Gamma_V Y \arrow[r,"\gamma_Y"] & Y \end{tikzcd}\]
    By \cref{SuppProp} (1), the morphism $\gamma_X$ is an isomorphism, thus $\Gamma_V X$ lies in the aisle generated by $\widetilde{\cX}$ and $\Gamma_V f$ is zero, so must be $f$.
    
    \noindent (iii) We prove the inclusion ${\cA_V^\cX}^\perp\subseteq\cV_V^\cX$. Recall the notation of \cref{GammaV}, write $V=\bigcup_{i\in I}V(\p_i)$ as a union of Zariski closed subsets, let $\mathcal{F}$ be the lattice of finite subsets of $I$ and, for any $F\in\mathcal{F}$, $\a_F=\prod\limits_{i\in F}\p_i$. For any $\widetilde{L}$ in $\widetilde{\cX}$ and $Y$ in ${\cA_V^\cX}^\perp$, since $\widetilde{L}$ is compact (\cref{LatComp}), by adjunction in \cref{DerAct}, we have that
    \[\Hom_{\cD(RQ)}\left(\widetilde{L},\Gamma_V Y\right)\cong{\underrightarrow{\lim}}_{(F,t)\in\mathcal{F}\times\bN}\Hom_{\cD(RQ)}\left(K\left(\a_F^{(t)}\right)\otimesL\widetilde{L},Y\right)\]
    Moreover, since the radical ideal of $\a_F^{(t)}$ is equal to the radical of $\a_F$, by \cref{AisleLift} (2)
    \[K\left(\a_F^{(t)}\right)\otimesL\widetilde{L}\in\aisle_{RQ}\leftangle R/\a_F\otimesL\widetilde{L}\rightangle=\aisle_{RQ}\leftangle R/\p_i\otimesL\widetilde{L}\mid i\in F\rightangle\subseteq\cA_V^\cX\]
    thus $\Hom_{\cD(RQ)}\left(\widetilde{L},\Gamma_V Y\right)=0$ and $\Gamma_V Y\in\widetilde{X}^\perp$.
    
    \noindent The first equality of (\ref{equalities}) follows by (i), (ii) and the inclusion ${}^\perp\cV_V^\cX\subseteq\cA_V^\cX$ from (iii); the second equality of (\ref{equalities}) follows by the inclusion ${\cU_V^\cX}^\perp\subseteq{\cA_V^\cX}^\perp$ from (i), (ii) and (iii).
    
    \noindent Let $\tau_\cX^<$ and $\tau_\cX^>$ be the truncation functors of $\aisle_{RQ}\leftangle\widetilde{X}\rightangle$ and $\widetilde{X}^\perp$, respectively. It remains to prove that the truncation functor associated to $\cA_V^\cX$ is the composite $\tau_\cX^{<}\circ\Gamma_V$. Consider the truncation triangle of $Z$ with respect to the t-structure $(\cA_V^\cX,\cV_V^\cX)$:
    \[\begin{tikzcd} X_Z \arrow[r,"f"] & Z \arrow[r,"g"] & Y_Z \arrow[r,"h"] & X_Z[1] \end{tikzcd}\]
    Since $\Gamma_V Y_Z$ is in $\widetilde{\cX}^\perp$, the morphism $\Gamma_V Z\xrightarrow{\Gamma_V(g)}\Gamma_V Y_Z$ factors trough $\tau_\cX^{>}(\Gamma_V Z)$, thus we can consider the octahedral diagram
    \[\begin{tikzcd}
    {\tau_\cX^{<}(\Gamma_V Z)} \arrow[d, dashed] \arrow[r] & \Gamma_V Z \arrow[r] \arrow[d,"\|", phantom] & \tau_\cX^{>}(\Gamma_V Z) \arrow[d] \arrow[r,"+"] & {} \\
    {\Gamma_V X_Z} \arrow[d, dashed] \arrow[r] & \Gamma_V Z \arrow[r,"{\Gamma_V(g)}"] \arrow[d] & \Gamma_V Y_Z \arrow[d,"\|", phantom] \arrow[r,"+"] & {} \\
    C \arrow[r] & \tau_\cX^{>}(\Gamma_V Z) \arrow[r] & \Gamma_V Y_Z \arrow[r,"+"] & {}
    \end{tikzcd}\]
    According to the octahedral axiom, the first column is a distinguished triangle. So is its rotation
    \[C[-1]\to\tau_\cX^{<}(\Gamma_V Z)\to\Gamma_V X_Z\xrightarrow{+}C\]
    Noting that, from the third row of the octahedron, $C$ is in $\widetilde{\cX}^\perp$ and $\Gamma_V X_Z\cong X_Z$ is in $\aisle_{RQ}\leftangle\widetilde{\cX}\rightangle$, the triangle splits, i.e. $\tau_\cX^{<}(\Gamma_V Z)\cong C[-1]\oplus X_Z$. This forces $C$ to be also in $\aisle_{RQ}\leftangle\widetilde{X}\rightangle$, thus it is zero and $\tau_\cX^{<}(\Gamma_V Z)\cong X_Z$.
\end{proof}
\end{thm}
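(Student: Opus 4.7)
The plan is to establish the theorem in three stages: (i) verify that $\cA_V^\cX$ is a compactly generated aisle; (ii) prove that its right orthogonal is exactly $\cV_V^\cX$ and that the intersection description agrees with $\cA_V^\cX$; (iii) identify the truncation functor via an octahedral argument. Stage (i) is immediate from \cref{AisleLift}(4), since each generator $R/\p\otimesL\widetilde{L}$ (for $\p\in V$ and $L\in\ind(\cX)$) generates the same aisle as the compact object $K(\p)\otimesL\widetilde{L}$.

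For stage (ii), I would introduce the auxiliary class $\cU_V^\cX:=\aisle_{RQ}\leftangle\widetilde{\cX}\rightangle\cap\supp_R^{-1}(V)$ and prove the three inclusions (a) $\cA_V^\cX\subseteq\cU_V^\cX$, (b) $\Hom_{\cD(RQ)}(\cU_V^\cX,\cV_V^\cX)=0$, and (c) $(\cA_V^\cX)^\perp\subseteq\cV_V^\cX$. Inclusion (a) is checked on generators: $R/\p\otimesL\widetilde{L}$ lies in $\aisle_{RQ}\leftangle\widetilde{\cX}\rightangle$ by \cref{ClosureProp}(1) since $R/\p\in\cD^{\leq 0}(R)$, while $\supp_R(R/\p\otimesL\widetilde{L})\subseteq V(\p)\subseteq V$ because $\Gamma_\q$ annihilates this complex whenever $\q\notin V(\p)$; both properties are preserved by the generation operations. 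For (b), any morphism $f\colon X\to Y$ fits into a naturality square involving $\gamma_X$ and $\gamma_Y$; since $\supp_R(X)\subseteq V$ the former is an isomorphism (\cref{SuppProp}(1)), and the induced map $\Gamma_V X\to\Gamma_V Y$ must vanish as $\Gamma_V X\in\aisle_{RQ}\leftangle\widetilde{\cX}\rightangle$ while $\Gamma_V Y\in\widetilde{\cX}^\perp$. Once (a), (b), (c) are available, the chain ${}^\perp\cV_V^\cX\subseteq{}^\perp(\cA_V^\cX)^\perp=\cA_V^\cX\subseteq\cU_V^\cX\subseteq{}^\perp\cV_V^\cX$ collapses into equalities, simultaneously yielding the t-structure and the intersection description.

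The main obstacle is inclusion (c). For $Y\in(\cA_V^\cX)^\perp$ and $\widetilde{L}\in\widetilde{\cX}$, I plan to rewrite $\Hom_{\cD(RQ)}(\widetilde{L},\Gamma_V Y)$ using the presentation of $\Gamma_V$ from \cref{GammaV} as a homotopy colimit of $\RHom_R(K(\a_F^{(t)}),-)$ over finite subsets $F$ of a Zariski cover of $V$; compactness of $\widetilde{L}$ (\cref{LatComp}) together with the adjunction of \cref{DerAct} converts this into a directed colimit of $\Hom_{\cD(RQ)}(K(\a_F^{(t)})\otimesL\widetilde{L},Y)$. The crucial step is then to show each $K(\a_F^{(t)})\otimesL\widetilde{L}$ lies in $\cA_V^\cX$ via successive applications of \cref{AisleLift}: the Koszul identification of part (4) rewrites it as $R/\a_F^{(t)}\otimesL\widetilde{L}$, radical-invariance in part (2) reduces the exponent $t$ to $1$, and the multiplicativity of part (3) splits $\a_F$ into its prime factors $\p_i\in V$.

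Finally, for stage (iii), given the truncation triangle $X_Z\to Z\to Y_Z$ with respect to $(\cA_V^\cX,\cV_V^\cX)$, I would observe that $\Gamma_V Y_Z\in\widetilde{\cX}^\perp$, so $\Gamma_V Z\to\Gamma_V Y_Z$ factors through $\tau_\cX^{>}(\Gamma_V Z)$, and apply the octahedral axiom to this factorization together with the triangle $\Gamma_V X_Z\to\Gamma_V Z\to\Gamma_V Y_Z$. The resulting distinguished triangle $\tau_\cX^{<}(\Gamma_V Z)\to\Gamma_V X_Z\to C$ has $C\in\widetilde{\cX}^\perp$ while the two preceding terms lie in $\aisle_{RQ}\leftangle\widetilde{\cX}\rightangle$, so orthogonality splits it and forces $C=0$. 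Combined with $\Gamma_V X_Z\cong X_Z$ (from $\supp_R(X_Z)\subseteq V$ via \cref{SuppProp}(1)), this yields $\tau_\cX^{<}(\Gamma_V Z)\cong X_Z$, identifying the truncation functor.
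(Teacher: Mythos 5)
Your proposal is correct and follows essentially the same route as the paper: introduce the auxiliary class $\cU_V^\cX$, verify the three inclusions $\cA_V^\cX\subseteq\cU_V^\cX$, $\Hom(\cU_V^\cX,\cV_V^\cX)=0$, and $(\cA_V^\cX)^\perp\subseteq\cV_V^\cX$ (the last via the homotopy-colimit presentation of $\Gamma_V$, compactness of lattices, and the Koszul-to-quotient manipulations of \cref{AisleLift}), then close the chain of inclusions, and finally run the octahedral argument to identify $\tau_\cX^{<}\circ\Gamma_V$ as the truncation. The only cosmetic difference is that you spell out the chain of applications of \cref{AisleLift}~(4), (2), (3) more explicitly than the paper, which is a minor clarification rather than a different approach.
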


Given a homomorphism $\sigma:\Spec(R)\to\Aisle(\cD(\bK Q))$ its associated aisle is
\[\cA_\sigma:=\aisle_{RQ}\leftangle R/\p\otimesL\widetilde{\sigma(\p)}\mid\p\in\Spec(R)\rightangle=\aisle_{RQ}\leftangle\bigcup_{\Spec(R)}\cA_{V(\p)}^{\sigma(\p)}\rightangle\]
In particular, the aisles of the form $\cA_{V(\p)}^{\sigma(\p)}$ are a special kind of aisles, from which we can construct the ones given by $\psi$ in \ref{AisleAss}. Note that, by \cref{ElemAisle}, we can compute also the associated (homotopically smashing) coaisle $\cV_\sigma:=\cA_\sigma^\perp$, indeed
\[\cV_\sigma=\bigcap_{\Spec(R)}\cV_{V(\p)}^{\sigma(\p)}=\leftcurly Y\in\cD(RQ)\mid\Gamma_{V(\p)}Y\in\widetilde{\sigma(\p)}^\perp\text{ for any }\p\in\Spec(R)\rightcurly\]
Note that in general the intersection of coaisles is not a coaisle, but this holds for homotopically smashing ones since these are defined just by closure operators. Indeed, by \cite[Remark 8.9]{SS} and \cite[Theorem 4.7, Proposition 5.10]{LV}, they are exactly the homotopically smashing complete cosuspended subcategories closed under pure subobjects (we refer to \textit{loc. cit.} for the undefined terminology).

A parallel approach that would allow a classification of the compactly generated t-structures is to classify the homotopically smashing ones and then prove that these are the same, i.e. prove that the telescope conjecture holds.

\subsection{Homotopically smashing t-structures}
In this section we will show that, according to the commutative setting (\cite[Section 2]{HN}), homotopically smashing coaisles are cogenerated by stalk complexes of vertexwise injective $RQ$-modules. In particular, given a poset homomorphism $\sigma:\Spec(R)\to\Aisle(\bK Q)$, a candidate set to cogenerate the coaisle $\cV_\sigma$ is
\[\mathcal{E}_\sigma=\leftcurly E(R/\p)\otimesL\widetilde{\sigma(\p)^\perp}\mid\p\in\Spec(R)\rightcurly\]

The following is a dual version of \cref{AisleLift} and shows how, for the generation of coaisles, it is equivalent to consider modules of the form $E(R/\p)\otimesL\widetilde{L}$ or $\kp\otimesL\widetilde{L}$.
\begin{prop}\label{CoaisleLift}
    For any prime ideals $\p$ and $\q$ and indecomposable $\bK Q$-complex $L$, it holds that:
    \begin{enumerate}
        \item If $\q\subseteq\p$ then $\cosusp_{RQ}^{\Pi,\hs}\leftangle E(R/\q)\otimesL\widetilde{L}\rightangle\subseteq\cosusp_{RQ}^{\Pi,\hs}\leftangle E(R/\p)\otimesL\widetilde{L}\rightangle$;
        \item $\cosusp_{RQ}^{\Pi,\hs}\leftangle E(R/\p)\otimesL\widetilde{L}\rightangle=\cosusp_{RQ}^{\Pi,\hs}\leftangle \kp\otimesL\widetilde{L}\rightangle$.
    \end{enumerate}
\begin{proof}
    Firstly note that the proofs of \cite[Lemma 2.6, Lemma 2.8, Lemma 2.9]{HN} holds true even for $\cV$ homotopically smashing complete cosuspended subcategory of $\cD(R)$, without assuming it is a coaisle. In particular, for such $\cV\subseteq\cD(R)$ and for any $n\in\bZ$, it holds that
    \[E(R/\p)[-n]\in\cV\text{ if and only if }\kp[-n]\in\cV\]
    and, for any prime ideal $\q\subseteq\p$, that
    \[E(R/\p)[-n]\in\cV\text{ implies }E(R/\q)[-n]\in\cV\]  
    
    \noindent (1) Let $\q\subseteq\p$, consider the class
    \[\mathcal{M}=\leftcurly M\in\cD(R)\mid M\otimesL\widetilde{L}\in\cosusp_{RQ}^{\Pi,\hs}\leftangle E(R/\p)\otimesL\widetilde{L}\rightangle\rightcurly\]
    it is a homotopically smashing complete cosuspended subcategory of $\cD(R)$ containing $E(R/\p)[0]$, thus, by the above discussion, it contains also $E(R/\q)[0]$.
    
    \noindent (2) Consider the class
    \[\mathcal{M}'=\leftcurly M\in\cD(R)\mid M\otimesL\widetilde{L}\in\cosusp_{RQ}^{\Pi,\hs}\leftangle \kp\otimesL\widetilde{L}\rightangle\rightcurly\]
    it is a homotopically smashing complete cosuspended subcategory of $\cD(R)$ containing $\kp[0]$. By the above discussion, $\kp[0]\in\mathcal{M}$ and $E(R/\p)[0]\in\mathcal{M}'$ and the thesis follows.
\end{proof}
\end{prop}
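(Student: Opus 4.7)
My plan is to transfer the generation problem from $\cD(RQ)$ back to $\cD(R)$ by fixing the ``quiver direction'' $\widetilde L$ and letting the ``commutative direction'' vary. Concretely, for both parts I would consider, for a given homotopically smashing complete cosuspended subcategory $\cV\subseteq\cD(RQ)$, the class
\[\mathcal{M}=\leftcurly M\in\cD(R)\mid M\otimesL\widetilde{L}\in\cV\rightcurly.\]
Since $\widetilde L$ is compact by \cref{LatComp}, the functor $\_\otimesL\widetilde L$ commutes with coproducts, products (by \cref{ClosureLem}(4)), shifts, extensions, and directed homotopy colimits. Therefore $\mathcal M$ inherits from $\cV$ all the closure properties needed to be a homotopically smashing complete cosuspended subcategory of $\cD(R)$. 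This is the natural analogue, for the coaisle side, of the device used in the proof of \cref{AisleLift}.

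The second ingredient is the following two facts about any homotopically smashing complete cosuspended subcategory $\cV\subseteq\cD(R)$, for any $n\in\bZ$ and prime ideal $\p$:
\begin{enumerate}
\item[(i)] $E(R/\p)[-n]\in\cV$ if and only if $\kp[-n]\in\cV$;
\item[(ii)] if $\q\subseteq\p$ and $E(R/\p)[-n]\in\cV$, then $E(R/\q)[-n]\in\cV$.
\end{enumerate}
Both statements are the content of \cite[Lemmas 2.6, 2.8, 2.9]{HN} in the case where $\cV$ is assumed to be a coaisle, but a careful reading shows that the arguments there only use closure under extensions, products, negative shifts, and directed homotopy colimits, and never require truncation triangles. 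The first thing I would do is make this observation explicit as a short remark (or, if cleaner, restate the relevant lemmas in the form needed here).

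With (i) and (ii) in hand, the rest is a direct transfer. For part (1), take $\cV=\cosusp_{RQ}^{\Pi,\hs}\leftangle E(R/\p)\otimesL\widetilde L\rightangle$: the associated $\mathcal M$ contains $E(R/\p)[0]$ by construction, so (ii) yields $E(R/\q)[0]\in\mathcal M$, which is exactly the claimed containment. For part (2), one inclusion is obtained by taking the $\cV$ generated by $\kp\otimesL\widetilde L$ and using (i) to move from $\kp[0]\in\mathcal M$ to $E(R/\p)[0]\in\mathcal M$; the reverse inclusion is symmetric, with the roles of $\kp$ and $E(R/\p)$ swapped.

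The main obstacle I anticipate is the bookkeeping in the first step: one must verify that the proofs in \cite{HN}, which are phrased for homotopically smashing coaisles, really only use the closure axioms and not the existence of a t-structure, since here $\mathcal M$ may fail to be a coaisle on its own. Once this is pinned down, everything else is a formal consequence of the interaction between the $\cD(R)$-action and the defining closure properties of $\cosusp^{\Pi,\hs}$.
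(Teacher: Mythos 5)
Your proof matches the paper's argument essentially verbatim: both observe that the transfer class $\mathcal{M}=\{M\in\cD(R)\mid M\otimesL\widetilde L\in\cV\}$ is a homotopically smashing complete cosuspended subcategory of $\cD(R)$ (using compactness of $\widetilde L$ for closure under products, as in \cref{ClosureLem}(4)), and both rely on the same observation that the relevant lemmas from \cite{HN} only use the closure axioms of such a subcategory and never the existence of a t-structure. There is no meaningful difference in approach.
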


\begin{cor}\label{CoaisleCor}
    Let $\cV$ be a homotopically smashing coaisle of $\cD(RQ)$, $L$ an indecomposable $\bK Q$-complex and $\p\in\Spec(R)$, then:
    \begin{enumerate}
        \item For any prime ideal $\q\subseteq\p$, $E(R/\p)\otimesL\widetilde{L}\in\cV$ implies $E(R/\q)\otimesL\widetilde{L}\in\cV$;
        \item $E(R/\p)\otimesL\widetilde{L}\in\cV$ if and only if $\kp\otimesL\widetilde{L}\in\cV$.
    \end{enumerate}
\end{cor}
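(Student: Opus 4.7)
The plan is to reduce both claims to direct applications of \cref{CoaisleLift}, using that any homotopically smashing coaisle $\cV$ of $\cD(RQ)$ is, in particular, a homotopically smashing complete cosuspended subcategory. Cosuspendedness is automatic for the coaisle of a t-structure (it is closed under negative shifts, extensions, and direct summands); closure under products follows because the right orthogonal of any class is product-closed, via the natural isomorphism $\Hom_{\cD(RQ)}(X,\prod Y_i)\cong\prod\Hom_{\cD(RQ)}(X,Y_i)$; and closure under directed homotopy colimits holds by assumption. Consequently, whenever an object $Z$ lies in $\cV$, the entire subcategory $\cosusp_{RQ}^{\Pi,\hs}\leftangle Z\rightangle$ is contained in $\cV$.

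With this observation in hand, I would deduce (1) as follows. Suppose $E(R/\p)\otimesL\widetilde{L}\in\cV$. Then $\cosusp_{RQ}^{\Pi,\hs}\leftangle E(R/\p)\otimesL\widetilde{L}\rightangle\subseteq\cV$, and by \cref{CoaisleLift}(1) this subcategory already contains $E(R/\q)\otimesL\widetilde{L}$ for every prime $\q\subseteq\p$. Item (2) follows analogously from \cref{CoaisleLift}(2): the equality of minimal homotopically smashing complete cosuspended subcategories generated by $E(R/\p)\otimesL\widetilde{L}$ and by $\kp\otimesL\widetilde{L}$ immediately yields both implications, since membership of either generator in $\cV$ is equivalent to containment of this common closure in $\cV$.

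No real obstacle arises: the bulk of the work has already been absorbed into \cref{CoaisleLift}, where the corresponding inclusions between minimal subcategories closed under the relevant operations were established. The corollary is then essentially a tautological transfer of those inclusions across the embedding into an arbitrary homotopically smashing coaisle $\cV$, so the argument is short provided the preliminary closure observations on $\cV$ are stated carefully.
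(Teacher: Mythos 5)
Your proof is correct and matches the paper's intent: the paper states \cref{CoaisleCor} without proof precisely because it is the immediate consequence of \cref{CoaisleLift} that you spell out, namely that a homotopically smashing coaisle is a homotopically smashing complete cosuspended subcategory (cosuspended by definition, complete since coaisles are right orthogonals, homotopically smashing by hypothesis), hence contains $\cosusp_{RQ}^{\Pi,\hs}\leftangle Z\rightangle$ for any $Z$ it contains.
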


Now we present a slight variation of \cref{OrthProp}, showing how the $\Hom$-orthogonality of two indecomposable $\bK Q$-modules $M$ and $L$ is sufficient to detect the $\Hom$-orthogonality in $\cD(RQ)$ of $\widetilde{M}$ and $\kp\otimesL\widetilde{L}$.

\begin{lem}
    Let $M$ and $L$ be two indecomposable $\bK Q$-modules. Then, for any $\p\in\Spec(R)$, the following holds
    \begin{enumerate}
        \item $\Hom_{RQ}\left(\widetilde{M},\kp\otimesL\widetilde{L}\right)\neq0$ if and only if $\Hom_{\bK Q}(M,L)\neq0$;
        \item $\Ext_{RQ}^1\left(\widetilde{M},\kp\otimesL\widetilde{L}\right)\neq0$ if and only if $\Ext_{\bK Q}^1(M,L)\neq0$.
    \end{enumerate}
\begin{proof}
    (1) Suppose that $\Hom_{\bK Q}(M,L)\neq0$. By \cite[Theorem 4.1 (ii)]{CB} applied to the ring homomorphism $\pi:R\to\kp$, the natural morphism $\kp\otimes_R\Hom_{RQ}\left(\widetilde{M},\widetilde{L}\right)\to\Hom_{\kp Q}(M,L)$ is an isomorphism. Then, if the latter vector space is nonzero, we can find a nonzero homomorphism $f:M\to L$, which is the image of $1_{\kp}\otimes\widetilde{f}$, for a nonzero homomorphism $\widetilde{f}:\widetilde{M}\to\widetilde{L}$. In particular, $\widetilde{f}$ should send some nonzero element of $\widetilde{M}$ to an element of $\widetilde{L}$ which is not annihilated by the tensor product with $\kp$. Thus, the composite $\begin{tikzcd} \widetilde{M} \arrow[r,"\widetilde{f}"] & \widetilde{L} \arrow[r,"\pi\otimes\id_L"] & \kp\otimesL\widetilde{L} \end{tikzcd}$ is a nonzero homomorphism.
    
    For the other direction, take a nonzero homomorphism $\overline{f}:\widetilde{M}\to\kp\otimesL\widetilde{L}$. Note that the embedding $\Mods(\kp Q)\hookrightarrow\Mods(RQ)$ can be identified with the action $\kp\otimes_{\kp}\_$ and so, by \cref{FunProp} (2), it is right adjoint to $\kp\otimes_R\_$. By \cite[Corollary 3.2]{RSV}, the morphism $\overline{f}$ factors through $\kp\otimes_R\widetilde{M}$ giving a nonzero morphism $f:\kp\otimesL\widetilde{M}\to\kp\otimesL\widetilde{L}$ in $\Mods(\kp Q)$. Thus, by \cref{OrthProp} (taking $R=\kp$), we have that $\Hom_{\bK Q}(M,L)\neq0$.
    
    \noindent (2) Suppose that $\Ext_{\bK Q}^1(M,L)\neq0$. By \cite[Section 3]{Rin} (see also \cite[Lemma 3.8]{Wei}), it has a basis whose elements are extensions of the form
    \[\begin{tikzcd} 0 \arrow[r] & L \arrow[r,"\begin{bmatrix} \id_L \\ 0 \end{bmatrix}"] & E \arrow[r,"\begin{bmatrix} 0\,\id_M \end{bmatrix}"] & M \arrow[r] & 0 \end{tikzcd} \text{where } E=\left(L_i\oplus M_i,\begin{bmatrix} L_\alpha & \varepsilon_\alpha \\ 0 & M_\alpha \end{bmatrix}\right)\]
    with $\varepsilon_\alpha$ represented by $0,1$-matrices, as $L_\alpha$ and $M_\alpha$ (see \cref{LatLiftRem}). We can choose all such matrices over a field $\bK$ of characteristic $0$ to guarantee that all the commutativity relation between them keep holding over an arbitrary ring, and then consider the following extension
    \[\begin{tikzcd} 0 \arrow[r] & \kp\!\otimesL\!\widetilde{L} \arrow[r,"\begin{bmatrix} \id_L \\ 0 \end{bmatrix}"] & \overline{E} \arrow[r,"\begin{bmatrix} 0\,\id_{\widetilde{M}} \end{bmatrix}"] & \widetilde{M} \arrow[r] & 0 \end{tikzcd} \text{where } \overline{E}=\left(\left(\kp\!\otimesL\!\widetilde{L}_i\right)\oplus \widetilde{M}_i,\begin{bmatrix} L_\alpha & \varepsilon_\alpha\!\cdot\!\pi \\ 0 & M_\alpha \end{bmatrix}\right)\]
    Note that the sequence is exact since it is vertexwise split and it does not split over $RQ$ because at least one $\varepsilon_\alpha$ is non zero.
    
    For the other direction, consider a nontrivial extension $0\to\kp\otimesL\widetilde{L}\to P\to\widetilde{M}\to 0$. Since $\widetilde{M}$ is vertexwise free, the sequence is vertexwise split. Thus, applying the functor $\kp\otimes_R\_$ to it, we get the following diagram with exact rows
    \begin{equation*}
        \begin{tikzcd} 0 \arrow[r] & \kp\otimesL\widetilde{L} \arrow[r, "f"] \arrow[d, "\|", phantom] & P \arrow[r] \arrow[d, "\varphi"] & \widetilde{M} \arrow[r] \arrow[d] & 0 \\
        0 \arrow[r] & \kp\otimesL\widetilde{L} \arrow[r, "h"] & \kp\otimes_R P \arrow[r] & \kp\otimes_R\widetilde{M} \arrow[r] & 0 \end{tikzcd}
    \end{equation*}
    Note that if the second row were split, there would be a map $e:\kp\otimes_R P\to\kp\otimesL\widetilde{L}$ such that $e\circ\varphi\circ f=e\circ h=\id_L$. In particular, the first row would split too. Thus we get a nontrivial extension in $\Ext_{RQ}^1\left(\widetilde{M},\kp\otimesL\widetilde{L}\right)$.
\end{proof}
\end{lem}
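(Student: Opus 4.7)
My plan is to reduce both statements to the corresponding assertions over the residue field $\kp$, exploiting the adjunction between base change $\kp\otimes_R\_$ and restriction of scalars along $R\to\kp$ from \cref{FunProp}, together with the field-independence of the indecomposable modules over a Dynkin quiver (so that the indecomposable $\kp Q$-module of a given dimension vector may be identified with the indecomposable $\bK Q$-module of that same dimension vector).

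For (1), the adjunction yields
\[\Hom_{RQ}\!\left(\widetilde{M},\,\kp\otimesL\widetilde{L}\right)\cong\Hom_{\kp Q}\!\left(\kp\otimes_R\widetilde{M},\,\kp\otimes_R\widetilde{L}\right),\]
where $\kp\otimesL\widetilde{L}$ reduces to the ordinary tensor product by \cref{LatComp} and is already a $\kp Q$-module. By \cref{LatLift}, $\kp\otimes_R\widetilde{M}$ and $\kp\otimes_R\widetilde{L}$ are the unique indecomposable $\kp Q$-modules whose rank vectors are the dimension vectors of $M$ and $L$; field-independence then makes the $\kp$-dimension of the right-hand side equal to $\dim_\bK\Hom_{\bK Q}(M,L)$, and the equivalence of nonvanishings follows. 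Alternatively one may follow the route indicated in the excerpt: invoke the Crawley--Boevey isomorphism $\kp\otimes_R\Hom_{RQ}(\widetilde{M},\widetilde{L})\cong\Hom_{\kp Q}(M,L)$ from \cite{CB} to lift a nonzero $f:M\to L$ to some $\widetilde{f}:\widetilde{M}\to\widetilde{L}$ and postcompose with the canonical projection $\widetilde{L}\to\kp\otimesL\widetilde{L}$, recovering the converse by factoring any nonzero $\overline{f}$ through $\kp\otimes_R\widetilde{M}$ via the same adjunction.

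For the forward direction of (2), I would exploit Ringel's tree-module theorem: a tree basis of $\Ext_{\bK Q}^1(M,L)$ consists of extensions $0\to L\to E\to M\to 0$ whose arrow maps take the block form $\begin{bmatrix} L_\alpha & \varepsilon_\alpha\\ 0 & M_\alpha\end{bmatrix}$ with all entries $0,1$-matrices. Since these matrices satisfy the composition relations of $Q$ universally, the same combinatorial datum defines an $RQ$-module $\overline{E}$ with $\overline{E}_i=(\kp\otimes_R\widetilde{L}_i)\oplus\widetilde{M}_i$ and arrow action $\begin{bmatrix} L_\alpha & \varepsilon_\alpha\circ\pi\\ 0 & M_\alpha\end{bmatrix}$, fitting into a vertexwise split sequence $0\to\kp\otimesL\widetilde{L}\to\overline{E}\to\widetilde{M}\to 0$; this sequence is nonsplit over $RQ$ since some $\varepsilon_\alpha$ is nonzero while $\widetilde{M}$ is not annihilated by $\p$, so no section can kill all the off-diagonal terms. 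For the converse, given a nontrivial extension $0\to\kp\otimesL\widetilde{L}\to P\to\widetilde{M}\to 0$, vertexwise freeness of $\widetilde{M}$ makes it vertexwise split, so $\kp\otimes_R\_$ preserves its exactness; if the resulting $\kp Q$-sequence admitted a retraction $e:\kp\otimes_R P\to\kp\otimesL\widetilde{L}$, then $e\circ\varphi$ (with $\varphi:P\to\kp\otimes_R P$ the canonical morphism) would retract the original inclusion, since the latter has domain already in $\Mods(\kp Q)$ and therefore factors through $\varphi$. This produces a nonzero class in $\Ext_{\kp Q}^1(M,L)\cong\Ext_{\bK Q}^1(M,L)$, as required.

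The step I expect to be the main obstacle is the forward direction of (2): one has to verify that \emph{every} nonzero Ext class, not merely some distinguished basis element, admits a representative in tree form with $0,1$-entries, and that the composition relations of $Q$ continue to hold for $\overline{E}$ as an $RQ$-module rather than only as a $\bK Q$-module in characteristic zero. Once this is secured, the remaining arguments reduce to formal adjunction bookkeeping combined with \cref{LatComp}.
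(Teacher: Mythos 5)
Your proposal is correct and matches the paper's proof in both parts: for (1) you reduce, via the extension--restriction adjunction and the Crawley--Boevey base change isomorphism, to the content of \cref{OrthProp}, and for (2) you lift a Ringel tree-extension basis element combinatorially and use that base change to $\kp$ preserves a vertexwise split short exact sequence. Your flagged concern about the forward direction of (2) is unwarranted: a single nonzero tree-form class suffices (the basis provides one), and since $RQ$ is a path algebra without relations the $0,1$-matrices automatically define an $RQ$-module, with no compatibility conditions to verify.
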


\begin{cor}\label{VarOrth}
    For any two indecomposable $\bK Q$-complexes $M$ and $L$, and for any $\p\in\Spec(R)$: 
    \[\Hom_{\cD(RQ)}\left(\widetilde{M},\kp\otimesL\widetilde{L}\right)=0\text{ if and only if }\Hom_{\cD(\bK Q)}(M,L)=0\]
\begin{proof}
    Analogous to \cref{OrthCor}.
\end{proof}
\end{cor}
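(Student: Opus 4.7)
The plan is to mimic the proof of \cref{OrthCor} verbatim, with the preceding lemma replacing \cref{OrthProp}. First I would write $M=M'[m]$ and $L=L'[\ell]$, where $M'$ and $L'$ are indecomposable $\bK Q$-modules and $m,\ell\in\bZ$. Using the standard interpretation of $\Hom$ in the derived category as $\Ext$, I would rewrite the two sides as
\[\Hom_{\cD(RQ)}\bigl(\widetilde{M},\kp\otimesL\widetilde{L}\bigr)\cong\Ext_{RQ}^{\ell-m}\bigl(\widetilde{M'},\kp\otimesL\widetilde{L'}\bigr)\]
and
\[\Hom_{\cD(\bK Q)}(M,L)\cong\Ext_{\bK Q}^{\ell-m}(M',L').\]

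Now I would perform the case analysis on $n:=\ell-m$. When $n=0$ or $n=1$, the equivalence is exactly the content of the preceding unnamed lemma (parts (1) and (2) respectively), so both sides vanish simultaneously. When $n<0$, both $\Ext$ groups are zero by negativity. The only remaining range is $n\geq 2$: on the left, $\widetilde{M'}$ is an $RQ$-lattice, hence by \cref{LatComp} it has projective dimension at most $1$ over $RQ$, so $\Ext_{RQ}^{n}(\widetilde{M'},\_)=0$; on the right, $\bK Q$ is hereditary, so $\Ext_{\bK Q}^{n}(M',L')=0$. Thus in every degree the two vanishings are simultaneous, yielding the desired biconditional.

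The argument is essentially routine once the preceding lemma is in hand, and no step should pose any genuine obstacle; the only mild subtlety worth flagging is that the finiteness of projective dimension required on the left is controlled by the \emph{first} argument $\widetilde{M'}$, so the coefficient module $\kp\otimesL\widetilde{L'}$ need not have finite projective dimension over $RQ$ for the Ext-vanishing in high degrees to apply.
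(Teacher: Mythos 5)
Your proof is correct and matches what the paper intends: it follows the proof of \cref{OrthCor} verbatim, with the preceding unnamed lemma substituting for \cref{OrthProp} in degrees $0,1$, and the vanishing in degrees $n<0$ and $n\geq 2$ handled exactly as there (projective dimension $\leq 1$ of $\widetilde{M'}$ on one side, heredity of $\bK Q$ on the other). Your closing remark about the vanishing in high degrees being governed solely by the projective dimension of the first argument $\widetilde{M'}$ is a worthwhile clarification, since $\kp\otimesL\widetilde{L'}$ is no longer a lattice.
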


The next theorem will prove that the modules of the form $E(R/\p)\otimesL\widetilde{L}$ which are contained in $\cV_\sigma$ are exactly the ones in $\mathcal{E}_\sigma$.
\begin{thm}\label{Esigma}
    Let $\sigma:\Spec(R)\to\Aisle(\cD(\bK Q))$ be a homomorphism of posets, for any prime ideal $\p$ and indecomposable $\bK Q$-complex $L$, then
    \[E(R/\p)\otimesL\widetilde{L}\in\cV_\sigma\text{ if and only if }L\in\sigma(\p)^\perp\]
\begin{proof}
    Recall that $\cV_\sigma=\cA_\sigma^\perp$ where $\cA_\sigma:=\aisle_{RQ}\leftangle R/\q\otimesL\widetilde{\sigma(\q)}\mid\q\in\Spec(R)\rightangle$. Let $L$ be an indecomposable $\bK Q$-complex.
    By \cref{AisleLift} (4) and \cref{CoaisleCor} (2), $E(R/\p)\otimesL\widetilde{L}\in\cA_\sigma^\perp$ if and only if
    \[\Hom_{\cD(RQ)}\left(K(\q)\otimesL\widetilde{M},\kp\otimesL\widetilde{L}\right)=0\] for any $\q\in\Spec(R)$ and any $M\in\sigma(\q)$. By adjunction (\cref{DerAct}) and self-duality of the Koszul complex (\cref{Koszul}), this holds if and only if
    \[\Hom_{\cD(RQ)}\left(\widetilde{M},\left(\kp\otimesL K(\q)[-n]\right)\otimesL\widetilde{L}\right)=0\]
    for any $\q\in\Spec(R)$ and any $M\in\sigma(\q)$. By \cref{Koszul}, this is always true for $\q\nsubseteq\p$, while for $\q\subseteq\p$ it is equivalent to
    \[\bigoplus_{i=0}^n\Hom_{\cD(RQ)}\left(\widetilde{M},\kp^{\binom{n}{i}}[i-n]\otimesL\widetilde{L}\right)=0\]
    for any $M\in\sigma(\q)$. Since $\sigma(\q)$ is closed under positive shifts, this is equivalent to
    \[\Hom_{\cD(RQ)}\left(\widetilde{M},\kp\otimesL\widetilde{L}\right)=0\]
    for any $M\in\sigma(\q)$. Thus, by \cref{VarOrth}, we can conclude that $E(R/\p)\otimesL\widetilde{L}\in\cA_\sigma^\perp$ if and only if $\Hom_{\cD(\bK Q)}(M,L)=0$ for any $M\in\sigma(\q)$ and $\q\subseteq\p$. Which means, since $\sigma$ is a poset homomorphism, for any $M\in\sigma(\p)$. So, the thesis follows.
\end{proof}
\end{thm}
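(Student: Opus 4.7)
The plan is to unpack the membership $E(R/\p)\otimesL\widetilde{L}\in\cV_\sigma=\cA_\sigma^\perp$ using the compact generators of $\cA_\sigma$, and then exploit the tensor-hom adjunction together with the self-duality of the Koszul complex to arrive at a clean $\bK Q$-linear orthogonality statement.

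First, I would use \cref{AisleLift}(4) to replace each generator $R/\q\otimesL\widetilde{\sigma(\q)}$ of $\cA_\sigma$ with the compact generator $K(\q)\otimesL\widetilde{\sigma(\q)}$, which is better behaved under adjunction. Since $\cV_\sigma$ is a homotopically smashing coaisle (being the right orthogonal of a compactly generated aisle), \cref{CoaisleCor}(2) lets me replace $E(R/\p)$ by $\kp$ in the orthogonality test. Thus $E(R/\p)\otimesL\widetilde{L}\in\cV_\sigma$ if and only if $\Hom_{\cD(RQ)}(K(\q)\otimesL\widetilde{M},\kp\otimesL\widetilde{L})=0$ for every $\q\in\Spec(R)$ and every $M\in\sigma(\q)$.

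Next, I would combine the tensor-hom adjunction of \cref{DerAct} with the self-duality isomorphism $\RHom_R(K(\q),\_)\cong K(\q)[-n]\otimesL\_$ from \cref{Koszul} to move $K(\q)$ to the right-hand side, obtaining the equivalent condition $\Hom_{\cD(RQ)}(\widetilde{M},(\kp\otimesL K(\q))[-n]\otimesL\widetilde{L})=0$. The explicit computation of $\kp\otimesL K(\q)$ in \cref{Koszul} makes this automatic when $\q\not\subseteq\p$, while for $\q\subseteq\p$ it splits as a finite direct sum of the shifted conditions $\Hom(\widetilde{M},\kp[i-n]\otimesL\widetilde{L})=0$ for $0\leq i\leq n$. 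Since $\sigma(\q)$ is an aisle, it is closed under positive shifts, so these shifted conditions collapse to the single case $\Hom(\widetilde{N},\kp\otimesL\widetilde{L})=0$ as $N$ ranges over $\sigma(\q)$.

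Finally, \cref{VarOrth} translates the latter into the $\bK Q$-linear statement $\Hom_{\cD(\bK Q)}(N,L)=0$ for every $N\in\sigma(\q)$ and every $\q\subseteq\p$. Because $\sigma$ is order-preserving, $\sigma(\q)\subseteq\sigma(\p)$ for $\q\subseteq\p$, so the intersection of the corresponding orthogonals is just $\sigma(\p)^\perp$, and one obtains $L\in\sigma(\p)^\perp$ as claimed. I expect the main obstacle to be the bookkeeping of the shifts in the Koszul reduction step and verifying that the positive-shift closure of the aisle $\sigma(\q)$ really does allow the $n+1$ shifted conditions to be collapsed into one; once this is clear, the proof becomes a mechanical concatenation of the preceding preparatory lemmas.
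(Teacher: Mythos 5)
Your proposal follows the paper's proof step for step: reducing to compact Koszul generators via \cref{AisleLift}(4), swapping $E(R/\p)$ for $\kp$ via \cref{CoaisleCor}(2), passing $K(\q)$ to the right using adjunction and Koszul self-duality, splitting on whether $\q\subseteq\p$, collapsing the shifted conditions via closure of aisles under positive shifts, and finishing with \cref{VarOrth} and the order-preservation of $\sigma$. This is the same argument as in the paper, correctly executed.
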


\begin{thm}\label{Cogen}
    For any homotopically smashing coaisle $\cV\subseteq\cD(RQ)$:
    \[\cV=\cosusp_{RQ}^{\Pi,\hs}\leftangle E(R/\p)\otimesL\widetilde{L}\in\cV\mid\p\in\Spec(R)\rightangle\]
\begin{proof}
    Denote the right hand side by $\mathcal{W}$. The containment $\mathcal{W}\subseteq\cV$ is clear. On the other hand, if $Y\in\cV$, by \cref{ClosureProp} (3), for any prime ideal $\p$ the complex $\Gamma_\p Y$ lies in $\cV$ and so does $\RHom_{R}(\kp,\Gamma_\p Y)$. The latter complex is a direct sum of indecomposable $\kp Q$-complexes $L_i$ such that $L_i\cong\kp\otimesL\widetilde{L_i}$ and they are all contained in $\cV$. Since, by \cref{CoaisleLift} (2) and \cref{CoaisleCor} (2), $\cW=\cosusp_{RQ}^{\Pi,\hs}\leftangle\kp\otimesL\widetilde{L}\in\cV\mid\p\in\Spec(R)\rightangle$, the complex $\RHom_{R}(\kp,\Gamma_\p Y)$ lies in $\cW$. Thus, by minimality (\cref{Min}), $\Gamma_\p Y\in\mathcal{W}$ for any prime ideal $\p$ and, by local to global principle (\cref{LTG}), $Y$ lies in $\mathcal{W}$.
\end{proof}
\end{thm}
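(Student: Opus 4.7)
The plan is to denote the right-hand side by $\mathcal{W}$ and observe that the inclusion $\mathcal{W}\subseteq\cV$ is automatic, since $\cV$ is a homotopically smashing coaisle (hence closed under products, extensions, negative shifts, and directed homotopy colimits) and the generators of $\mathcal{W}$ were chosen to lie in $\cV$. The substantive content is the reverse inclusion, and my strategy is to combine the two main results of Section \ref{LTGM} -- the local to global principle (\cref{LTG}) and the minimality of stalk subcategories (\cref{Min}) -- with the lattice lift machinery to reduce the problem to a statement about indecomposable $\kp Q$-complexes, where \cref{CoaisleCor} and \cref{CoaisleLift} do the rest.

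Fix $Y\in\cV$. By \cref{LTG} we have $Y\in\cosusp_{RQ}^{\hs}\leftangle\Gamma_\p Y\mid\p\in\Spec(R)\rightangle$, so it suffices to show $\Gamma_\p Y\in\mathcal{W}$ for each $\p$. Note that $\Gamma_\p Y$ is obtained from $Y$ by tensoring with a bounded complex of flat modules concentrated in nonnegative degrees, so \cref{ClosureProp}(3) gives $\Gamma_\p Y\in\cV$; similarly $\RHom_R(\kp,\Gamma_\p Y)\in\cV$ by \cref{ClosureProp}(2). Now apply \cref{Min} to the complex $\Gamma_\p Y\in\Gamma_\p\cD(RQ)$: it tells us that $\Gamma_\p Y$ already lies in $\cosusp_{RQ}^{\Pi,\hs}\leftangle\RHom_R(\kp,\Gamma_\p Y)\rightangle$, so it is enough to show that $\RHom_R(\kp,\Gamma_\p Y)\in\mathcal{W}$.

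The key observation at this point is that $\RHom_R(\kp,\Gamma_\p Y)$ is naturally a complex over $\kp Q$, and $\cD(\kp Q)$ is pure-semisimple (see \cref{GenRem}), so this complex decomposes as a coproduct of shifts of indecomposable $\kp Q$-modules $L_i$. Each $L_i$ is of the form $\kp\otimesL\widetilde{L_i}$ where $\widetilde{L_i}$ is the lattice lift of $L_i$, and each summand lies in $\cV$ because the whole coproduct does. By \cref{CoaisleCor}(2) this is equivalent to $E(R/\p)\otimesL\widetilde{L_i}\in\cV$, which is precisely the condition defining a generator of $\mathcal{W}$. Finally, \cref{CoaisleLift}(2) lets us generate $\kp\otimesL\widetilde{L_i}$ from $E(R/\p)\otimesL\widetilde{L_i}$ inside $\cosusp^{\Pi,\hs}$, so $\RHom_R(\kp,\Gamma_\p Y)\in\mathcal{W}$, as required.

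The main delicate point I anticipate is bookkeeping the closure conditions: the local to global principle only uses directed homotopy colimits while minimality involves products as well, so at the reduction step I need to make sure $\mathcal{W}$ is closed under both, which is true by construction. The second subtlety is the passage from $\RHom_R(\kp,\Gamma_\p Y)$ to its indecomposable summands over $\kp Q$ -- this crucially relies on pure-semisimplicity of $\cD(\kp Q)$ and on the compatibility of the lift $\widetilde{\,\cdot\,}$ with the base change $\kp\otimesL\_$, both of which are already in hand.
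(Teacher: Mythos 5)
Your proof is correct and follows essentially the same route as the paper: apply \cref{LTG} to reduce to stalks, \cref{Min} to reduce to $\RHom_R(\kp,\Gamma_\p Y)$, then use pure-semisimplicity of $\cD(\kp Q)$ together with \cref{CoaisleCor}(2) and \cref{CoaisleLift}(2) to place that complex in $\mathcal{W}$. You present the reduction top-down while the paper argues bottom-up, but the ingredients and their roles are identical; your explicit citation of \cref{ClosureProp}(2) for $\RHom_R(\kp,\Gamma_\p Y)\in\cV$ is a minor and welcome clarification of a point the paper leaves implicit.
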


\section{Main results}\label{MainRes}
\subsection{Telescope conjecture and Classification}
Thanks to these last results, we can build an assignment which links homotopically smashing coaisles of $\cD(RQ)$ to coaisles of $\cD(\bK Q)$. Note that we will consider the poset $\mathrm{Coaisle}(\cD(\bK Q))$ with the reverse order, which is actually isomorphic to $\Aisle(\cD(\bK Q))$. The assignment is the following:
\begin{equation*}\begin{gathered}
    \omega:\mathrm{Coaisle_{hs}}(\cD(RQ))\longrightarrow\Hom_\mathrm{Pos}(\Spec(R),\mathrm{Coaisle}(\cD(\bK Q))^\mathrm{op}) \\
    \cV\longmapsto\left(\omega_\cV:\p\mapsto\cosusp_{\bK Q}^{\Pi,\hs}\leftangle L\in\ind(\cD(\bK Q))\mid E(R/\p)\otimesL\widetilde{L}\in\cV\rightangle\right)
\end{gathered}\end{equation*}

\begin{rem}\label{CoaisleAss}
    The assignment $\omega$ is well-defined and injective.
    \begin{itemize}
        \item By \cref{GenRem}, for any $\p\in\Spec(R)$, $\omega_\cV(\p)$ is a (homotopically smashing) coaisle.
        \item The map $\omega_\cV$ is a poset homomorphism. Indeed, for any prime ideal $\q\subseteq\p$, by \cref{CoaisleLift} (1) we have that
        \[\cosusp_{\bK Q}^{\Pi,\hs}\leftangle L\mid E(R/\p)\otimesL\widetilde{L}\in\cV\rightangle\subseteq\cosusp_{\bK Q}^{\Pi,\hs}\leftangle L\mid E(R/\q)\otimesL\widetilde{L}\in\cV\rightangle\]
        i.e. $\omega_\cV(\q)\leq\omega_\cV(\p)$ in $\mathrm{Coaisle}(\cD(\bK Q))^\mathrm{op}$.
        \item The assignment $\omega$ is injective. Suppose that given two homotopically smashing coaisles $\cV$ and $\cV'$ we have that $\omega_\cV=\omega_{\cV'}$, then by \cref{Cogen} it turns out that
        \[\cV=\cosusp_{RQ}^{\Pi,\hs}\leftangle E(R/\p)\otimesL\widetilde{\omega_\cV(\p)}\rightangle=\cV'\]
    \end{itemize}
\end{rem}

\begin{thm}\label{Main}
    Let $R$ be a commutative noetherian ring and $Q$ a Dynkin quiver, then:
    \begin{enumerate}
        \item The following assignments are mutually inverse poset isomorphisms
        \begin{equation*}\begin{gathered}
            \Aisle_\mathrm{cg}(\cD(RQ))\longleftrightarrow\Hom_\mathrm{Pos}(\Spec(R),\Aisle(\cD(\bK Q))) \\
            \varphi:\cU\longmapsto\left(\p\mapsto\aisle_{\bK Q}\leftangle L\in\ind(\cD(\bK Q))\mid R/\p\otimesL\widetilde{L}\in\cU\rightangle\right) \\
            \aisle_{RQ}\leftangle R/\p\otimesL\widetilde{\sigma(\p)}\mid\p\in\Spec(R)\rightangle\longmapsfrom\,\sigma:\psi
        \end{gathered}\end{equation*}
        \item Any homotopically smashing t-structure in $\cD(RQ)$ is compactly generated.
    \end{enumerate}
\begin{proof} 
    (1) By \cref{InjAss}, it suffices to prove that $\psi$ is surjective. Given a compactly generated aisle $\cA$ in $\cD(RQ)$, the associated coaisle $\cV:=\cA^\perp$ is homotopically smashing. From the above discussion we can associate to it the poset homomorphism $\omega_\cV:\Spec(R)\to\mathrm{Coaisle}(\cD(\bK Q))^\mathrm{op}$. Define $\sigma\in\Hom_\mathrm{Pos}(\Spec(R),\Aisle(\cD(\bK Q)))$ to be such that $\sigma(\p):={}^\perp\omega_\cV(\p)$ for any prime ideal $\p$, and consider the compactly generated t-structure $(\cA_\sigma,\cV_\sigma)$, where $\cA_\sigma=\psi(\sigma)$. By \cref{Esigma}, $\omega_{\cV_\sigma}(\p)=\sigma(\p)^\perp=\omega_\cV(\p)$ for any prime ideal $\p$. Thus the homomorphism $\omega_{\cV_\sigma}$ is precisely $\omega_\cV$, and so, by injectivity of $\omega$, $(\cA,\cV)=(\cA_\sigma,\cV_\sigma)$.
    
    \noindent (2) Let $(\cA,\cV)$ be a homotopically smashing t-structure. Following the proof of point (1), we conclude that $(\cA,\cV)=(\cA_\sigma,\cV_\sigma)$ for some homomorphism $\sigma$. Thus, it is compactly generated.
\end{proof}
\end{thm}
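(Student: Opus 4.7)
The plan is to prove (1) and (2) simultaneously by showing that any homotopically smashing t-structure lies in the image of $\psi$, which automatically implies both statements (since every $\psi(\sigma) = \cA_\sigma$ is, by construction, compactly generated, so in particular surjectivity of $\psi$ onto $\Aisle_\mathrm{cg}$ follows for free). By \cref{InjAss} I already have injectivity of $\psi$ together with the identity $\varphi \circ \psi = \mathrm{id}$, so the only missing piece is surjectivity onto the appropriate target.

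The strategy is to exploit the second, ``dual'' classification developed in \cref{CoaisleAss} through the assignment $\omega$ on homotopically smashing coaisles. Concretely, I would start with an arbitrary homotopically smashing t-structure $(\cA,\cV)$ in $\cD(RQ)$. First, I form the poset homomorphism $\omega_\cV \colon \Spec(R) \to \mathrm{Coaisle}(\cD(\bK Q))^\mathrm{op}$, which is well-defined by \cref{CoaisleAss}. Next, I convert this into a poset homomorphism $\sigma \in \Hom_\mathrm{Pos}(\Spec(R),\Aisle(\cD(\bK Q)))$ by passing to left perpendiculars, setting $\sigma(\p) := {}^\perp \omega_\cV(\p)$ for each $\p \in \Spec(R)$; order-preservation is preserved because both perpendicular and reversal of the order conspire correctly. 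Then I form the compactly generated t-structure $(\cA_\sigma,\cV_\sigma)$ where $\cA_\sigma := \psi(\sigma)$.

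The key computation is to verify $\omega_{\cV_\sigma} = \omega_\cV$. This is essentially immediate from \cref{Esigma}: for any prime $\p$ and any indecomposable $L$, the latter lemma says $E(R/\p) \otimesL \widetilde{L} \in \cV_\sigma$ if and only if $L \in \sigma(\p)^\perp = \omega_\cV(\p)$, which is the same as saying $E(R/\p) \otimesL \widetilde{L} \in \cV$ by the very definition of $\omega_\cV$ (and noting $\omega_\cV(\p)$ is closed under the relevant operations). Hence the two homomorphisms $\omega_{\cV_\sigma}$ and $\omega_\cV$ agree on the generating class, and then by the injectivity of $\omega$ established in \cref{CoaisleAss}, I conclude $\cV_\sigma = \cV$, so $(\cA,\cV) = (\cA_\sigma,\cV_\sigma)$. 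This simultaneously shows (2) --- namely $(\cA,\cV)$ is compactly generated --- and (1) --- namely every compactly generated aisle arises as $\psi(\sigma)$ for some $\sigma$.

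I expect no serious obstacle at this final stage, since the substantive difficulty has already been absorbed into three earlier results: the \textbf{local to global principle} (\cref{LTG}) and the \textbf{minimality of stalk subcategories} (\cref{Min}), which together yield the cogeneration statement \cref{Cogen} used to prove injectivity of $\omega$; and the explicit computation of orthogonals via the Koszul self-duality in \cref{Esigma}, which pins down the coaisle of a $\psi(\sigma)$. The proof of the theorem itself is therefore a short diagram chase gluing these ingredients together, with the only mild subtlety being to keep track of the order reversal between $\Aisle$ and $\mathrm{Coaisle}^\mathrm{op}$ when passing from $\omega_\cV$ to $\sigma$.
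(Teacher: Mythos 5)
Your proposal is correct and follows essentially the same route as the paper: the paper's proof of (1) starts from a compactly generated aisle $\cA$ but only uses that $\cV=\cA^\perp$ is homotopically smashing, and (2) is deduced by observing the same argument applies verbatim; your framing simply starts directly from a homotopically smashing t-structure, which is a cosmetically cleaner packaging of the identical argument. One small wrinkle worth noting: the parenthetical claim that $L\in\omega_\cV(\p)$ is ``the same as saying $E(R/\p)\otimesL\widetilde{L}\in\cV$'' is neither obvious in the forward direction nor needed; the argument you actually want (and that the paper uses) is the chain of coaisle identities $\omega_{\cV_\sigma}(\p)=\sigma(\p)^\perp=\omega_\cV(\p)$, the first equality coming from \cref{Esigma} together with the fact that any coaisle of $\cD(\bK Q)$ equals the $\Pi,\hs$-cosuspended closure of its indecomposables, and the second from $({}^\perp\omega_\cV(\p))^\perp=\omega_\cV(\p)$.
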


\subsection{Wide subcategories}
Finally, we want to end the section by proving that, thanks to \cref{Cogen}, compactly generated aisles are determined on cohomology. This was already known both for hereditary finite dimensional algebras (by hereditary property) and for commutative noetherian ring (by \cite{AJS}).

\begin{lem}\label{CohDetLem}
    For any prime ideal $\p$ and indecomposable $\bK Q$-module $L$, the injective dimension of $E(R/\p)\otimesL\widetilde{L}$ over $RQ$ is less or equal then $1$.
\begin{proof}
    Let $\p\in\Spec(R)$ and $L\in\ind(\bK Q)$, consider the $RQ$-module $E(R/\p)\otimesL\widetilde{L}$ and a minimal injective resolution $E$ over $RQ$. Note that $\supp_R(E)=\supp_R\left(E(R/\p)\otimesL\widetilde{L}\right)=\{\p\}$ (see \cite[15.1.12]{CFH}). In particular, there is a long exact sequence
    \begin{equation*}\tag{$\ast$}\label{InjRes}
        \begin{tikzcd} 0 \arrow[r] & E(R/\p)\otimesL\widetilde{L} \arrow[r] & E^0 \arrow[r] & E^1 \arrow[r] & E^2 \arrow[r] & \ldots \end{tikzcd}
    \end{equation*}
    where, for all $n\geq0$, $E^n$ is an injective $RQ$-module supported on $\{\p\}$, i.e. it is vertexwise isomorphic to a direct sum of copies of $E(R/\p)$ and for any $i\in Q_0$ the map
    \[E_i^n\to\bigoplus_{\substack{\alpha\in Q_1 \\ s(\alpha)=i}}E_{t(\alpha)}^n\]
    is a split surjection (see \cite[Theorem 4.2]{EEG}). Since $\Hom_R(\kp,E(R/\p))\cong\kp$ and $\Hom_R(\kp,\_)$ preserves splitting, by \cite[Theorem 4.2]{EEG}, $\Hom_R(\kp,E^n)$ is an injective $\kp Q$-module for any $n$. Moreover, since all the maps in (\ref{InjRes}) are vertexwise split, applying $\Hom_R(\kp,\_)$ provides a minimal injective resolution of $\Hom_R(\kp,E(R/\p)\otimesL\widetilde{L})\cong\kp\otimesL\widetilde{L}$. Since $\kp Q$ is hereditary, $\kp\otimesL\widetilde{L}$ has injective dimension less or equal then $1$ and, by minimality of (\ref{InjRes}), we can conclude that $E^n=0$ for $n\geq2$.
\end{proof}
\end{lem}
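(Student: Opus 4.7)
The plan is to reduce the injective-dimension computation over $RQ$ to one over the hereditary algebra $\kp Q$ by restricting along $\Hom_R(\kp,\_)$. The ingredients I would use are the structure theorem of Enochs--Estrada--Garc\'ia for injective representations over $RQ$ (stated in the reference \cite{EEG}), the hereditariness of $\kp Q$, and the fact that $\widetilde L$ is vertex-wise finitely generated projective over $R$.

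First I would take a minimal injective resolution $0\to M\to E^0\to E^1\to\ldots$ of $M:=E(R/\p)\otimesL\widetilde L$ in $\Mods_R(RQ)$. Since $\widetilde L_i$ is a finitely generated projective $R$-module, each $M_i=E(R/\p)\otimes_R\widetilde L_i$ is a direct summand of a finite sum of copies of $E(R/\p)$, hence $R$-injective. On the other hand $\supp_R(M)=\{\p\}$, so each $E^n$ is also supported on $\{\p\}$, and the classification of injectives in $\Mods_R(RQ)$ (EEG, Theorem 4.2) forces each $E^n$ to be vertex-wise a direct sum of copies of $E(R/\p)$, with the canonical structure map a split surjection. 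Since $M$ is already vertex-wise $R$-injective, the vertex-wise restriction of the resolution is a long exact sequence of injective $R$-modules beginning with an injective module; it therefore splits at every vertex as $R$-modules.

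Next I would apply $\Hom_R(\kp,\_)$ vertex-wise. Using $\Hom_R(\kp,E(R/\p))\cong\kp$ and vertex-wise split exactness, I obtain an exact sequence
\[0\to\Hom_R(\kp,M)\to\Hom_R(\kp,E^0)\to\Hom_R(\kp,E^1)\to\ldots\]
of $\kp Q$-modules; the splitting condition on the structure maps of $E^n$ is preserved, so EEG again shows each $\Hom_R(\kp,E^n)$ is $\kp Q$-injective. A direct computation gives $\Hom_R(\kp,M)\cong\kp\otimesL\widetilde L$. Since $\kp Q$ is hereditary, this $\kp Q$-module has injective dimension at most $1$.

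The main obstacle is transferring the bound back across $\Hom_R(\kp,\_)$, and this is exactly where I would use minimality of $E^\bullet$. The cleanest route is to argue that the resolution $\Hom_R(\kp,E^\bullet)$ is again minimal: the number of copies of $E(R/\p)$ in $E^n_i$ is the Bass number $\mu^n(\p,M_i)$, and applying $\Hom_R(\kp,\_)$ converts these to the corresponding Bass numbers of $\kp\otimesL\widetilde L$ as a $\kp Q$-module, so $\Hom_R(\kp,E^\bullet)$ is the minimal injective resolution of $\kp\otimesL\widetilde L$ over $\kp Q$. Hereditariness of $\kp Q$ then forces $\Hom_R(\kp,E^n)=0$ for $n\geq 2$; since each $E^n_i$ is a sum of copies of $E(R/\p)$ whose cardinality is detected by $\dim_{\kp}\Hom_R(\kp,E^n_i)$, we conclude $E^n=0$ for $n\geq 2$, i.e.\ $\mathrm{injdim}_{RQ}(M)\leq 1$.
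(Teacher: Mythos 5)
Your proof takes essentially the same route as the paper's: minimal injective resolution over $RQ$, the Enochs--Estrada--Garc{\'i}a structure theorem to identify each $E^n$ as a vertexwise direct sum of copies of $E(R/\p)$ with split structure maps, vertexwise splitting of the resolution, application of $\Hom_R(\kp,\_)$ to land in injective $\kp Q$-modules, hereditariness of $\kp Q$, and finally the claim that the transferred complex is still minimal. You actually supply one justification the paper leaves implicit: that the resolution is vertexwise split because each $M_i = E(R/\p)\otimes_R\widetilde L_i$ is already $R$-injective, so the vertexwise restriction is a split-exact sequence of injective $R$-modules.

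The one place where your write-up is off is the Bass-number justification for minimality. The claim that ``the number of copies of $E(R/\p)$ in $E^n_i$ is the Bass number $\mu^n(\p, M_i)$'' cannot be right: since each $M_i$ is $R$-injective, $\mu^n(\p, M_i)=0$ for all $n\geq 1$, yet $E^n_i$ need not vanish in that range. Minimality of $E^\bullet$ is an $RQ$-notion and is not tracked by the vertexwise $R$-Bass numbers of the $M_i$. The invariant that is preserved by $\Hom_R(\kp,\_)$ is rather the multiplicity with which each indecomposable injective $RQ$-representation supported at $\{\p\}$ occurs as a direct summand of $E^n$ (these indecomposables are the coinductions of $E(R/\p)$ at each vertex, and $\Hom_R(\kp,\_)$ sends them to the corresponding indecomposable injective $\kp Q$-modules); one then checks, using that $\Hom_R(\kp,\_)$ is full on these injectives and that their $RQ$-endomorphism rings are local, that a split-exact summand of $\Hom_R(\kp, E^\bullet)$ would lift to one of $E^\bullet$, contradicting minimality. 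To be fair, the paper itself asserts minimality of $\Hom_R(\kp, E^\bullet)$ just as briefly, so this is a shared level of informality rather than a new gap; still, the Bass-number phrasing as you have it is incorrect and should be replaced.
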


\begin{thm}\label{CohDetThm}
    Let $(\cU,\cV)$ be a t-structure in the derived category of a ring $\cD(A)$. If $\cU={}^\perp\mathcal{E}$ for a set $\mathcal{E}$ of $A$-modules of injective dimension at most 1, then $\cU$ is determined on cohomology, i.e. for any $X\in\cD(RQ)$, it holds that
    \[X\in\cU\text{ if and only if }H^i(X)[-i]\in\cU\text{ for any }i\in\bZ\]
\begin{proof}
    Let $E\cong E_0\xrightarrow{d}E_1$ be a complex of injective $A$-modules concentrated in degrees 0 and 1. This induce the distinguished triangle
    \[\begin{tikzcd} E_1[-1] \arrow[r] & E \arrow[r] & E_0[0] \arrow[r,"d"] & E_1[0] \end{tikzcd}\]
    Given $X\in\cD(A)$, $X\in\cU$ if and only if $\Hom_{\cD(A)}(X,E[i])=0$ for any $i\leq0$ if and only if $H^i(\RHom_A(X,E))=0$ for any $i\leq0$. Noting that for $k=0,1$ there is an isomorphism $H^i(\RHom_A(X,E_k))\cong\Hom_A(H^{-i}(X),E_k)$ for any $i\leq0$, the long exact sequence in cohomology of the complex $\RHom_A(X,E)$ looks as
    \begin{equation*}\begin{aligned}
        \ldots&\to{\Hom_A(H^1(X),E_0)}\xrightarrow{\Hom_A(H^1(X),d)}{\Hom_A(H^1(X),E_1)}\to{\Hom_{\cD(A)}(X,E)}\longrightarrow{} \\
        {}&\longrightarrow{\Hom_A(H^0(X),E_0)}\xrightarrow{\Hom_A(H^0(X),d)}{\Hom_A(H^0(X),E_1)}\to\ldots
    \end{aligned}\end{equation*}
    It follows that $\Hom_{\cD(A)}(X,E[i])=0$ for any $i\leq0$ if and only if $\Hom_A(H^0(X),d)$ is injective and $\Hom_A(H^i(X),d)$ is an isomorphism for any $i>0$. Consider the complex $\overline{X}=\bigoplus_{i\in\bZ}H^i(X)[-i]$ and note that these latter conditions are equivalent to $\Hom_{\cD(A)}\left(\overline{X},E[i]\right)=0$ for any $i\leq0$, since the long exact sequence in cohomology would be the same. We can conclude that $X\in\cU$ if and only if $\overline{X}\in\cU$ and so the statement follows.
\end{proof}
\end{thm}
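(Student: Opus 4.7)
The plan is to reduce the vanishing condition that defines $\cU$ to a condition depending only on the individual cohomology groups $H^j(X)$. Since $\cU={}^\perp\mathcal{E}$ and $\cU$ is closed under positive shifts, $X\in\cU$ is equivalent to the vanishing of $\Hom_{\cD(A)}(X,E[i])$ for every $E\in\mathcal{E}$ and every $i\leq0$. It therefore suffices, fixing one such $E$, to show that this system of vanishings depends only on the graded object $\overline{X}:=\bigoplus_{j\in\bZ}H^j(X)[-j]$.

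First I would replace $E$ by a two-term injective resolution $I^0\xrightarrow{d}I^1$, which exists since the injective dimension of $E$ is at most $1$, and apply $\Hom_{\cD(A)}(X,-)$ to the canonical distinguished triangle $I^1[-1]\to E\to I^0\to I^1$ in $\cD(A)$. Because each $I^k$ is injective, the natural identification $\Hom_{\cD(A)}(X,I^k[i])\cong\Hom_A(H^{-i}(X),I^k)$ holds for all $i\in\bZ$ and $k=0,1$, and the maps induced by $d$ become $f_j:=\Hom_A(H^j(X),d)$. The long exact sequence on Hom-groups then pinches $\Hom_{\cD(A)}(X,E[i])$ between $\coker(f_{1-i})$ on one side and $\ker(f_{-i})$ on the other.

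Reading off this long exact sequence for every $i\leq 0$, the vanishings $\Hom_{\cD(A)}(X,E[i])=0$ become equivalent to $f_0$ being injective and $f_j$ being bijective for every $j\geq 1$. These conditions only involve each $H^j(X)$ individually, so the same analysis applied to $\overline{X}$ (whose cohomology agrees with that of $X$ in every degree) yields exactly the same constraints on the same maps $f_j$. Hence $X$ and $\overline{X}$ satisfy the vanishing simultaneously, and running this over all $E\in\mathcal{E}$ gives $X\in\cU\Leftrightarrow\overline{X}\in\cU$. Since aisles in $\cD(A)$ are closed under coproducts and direct summands, this is in turn equivalent to $H^j(X)[-j]\in\cU$ for every $j\in\bZ$, which is the desired conclusion.

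The main delicate point is isolating the vanishing condition on $H^j(X)$ from those at other cohomological degrees. The hypothesis that $E$ has injective dimension at most $1$ is exactly what makes this decoupling clean: each stretch of the long exact sequence involves only two consecutive cohomology groups of $X$ between consecutive $\Hom(X,E[i])$ terms. With a higher bound on injective dimension the same strategy would still apply in principle, but the constraints on the $f_j$ would intertwine more cohomology groups and require more careful accounting.
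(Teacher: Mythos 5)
Your proposal is correct and follows essentially the same route as the paper's own proof: replace $E$ by a two-term injective resolution, apply $\RHom_A(X,-)$ to the resulting distinguished triangle, use the identification $\Hom_{\cD(A)}(X,I^k[i])\cong\Hom_A(H^{-i}(X),I^k)$ for injective $I^k$, and read off from the long exact sequence that the vanishing of $\Hom_{\cD(A)}(X,E[i])$ for $i\leq 0$ is equivalent to $f_0$ being injective and $f_j$ bijective for $j\geq 1$, conditions which depend only on each $H^j(X)$ separately. The only difference is cosmetic: you spell out the final reduction (that closure of the aisle under coproducts and direct summands turns $\overline{X}\in\cU$ into the statement for each $H^j(X)[-j]$), which the paper compresses into ``and so the statement follows.''
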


\begin{cor}\label{DetCoh}
    Compactly generated aisles of $\cD(RQ)$ are determined on cohomology.
\begin{proof}
    Since any compactly generated t-structure of $\cD(RQ)$ is homotopically smashing, by \cref{Cogen} and \cref{CohDetLem}, the aisle is left orthogonal to set of $RQ$-modules of injective dimension at most 1. Then \cref{CohDetThm} concludes the proof.
\end{proof}
\end{cor}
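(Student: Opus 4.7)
My plan is to assemble the corollary from three previously established facts: the cogeneration result for homotopically smashing coaisles (\cref{Cogen}), the bound on the injective dimension of the cogenerators (\cref{CohDetLem}), and the general ``determined on cohomology'' criterion (\cref{CohDetThm}). The strategy is therefore to recognize the aisle $\cU$ of a compactly generated t-structure as the left orthogonal to a set of $RQ$-modules each of injective dimension at most $1$, which is exactly the input format required by \cref{CohDetThm}.

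First I would note that, by \cite[Proposition 7.2]{SSV} (already recorded in \cref{t-str}), every compactly generated t-structure is homotopically smashing; so if $(\cU,\cV)$ is a compactly generated t-structure in $\cD(RQ)$, then $\cV$ is a homotopically smashing coaisle. By \cref{Cogen}, the coaisle $\cV$ equals $\cosusp_{RQ}^{\Pi,\hs}\leftangle E(R/\p)\otimesL\widetilde{L}\in\cV\mid\p\in\Spec(R)\rightangle$. In particular, if one sets
\[\mathcal{E}=\leftcurly E(R/\p)\otimesL\widetilde{L}\in\cV\mid\p\in\Spec(R),\,L\in\ind(\cD(\bK Q))\rightcurly,\]
then $\mathcal{E}\subseteq\cV$ generates $\cV$ as a homotopically smashing complete cosuspended subcategory, whence $\cU={}^\perp\cV={}^\perp\mathcal{E}$.

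Next I would invoke \cref{CohDetLem}, which guarantees that each element of $\mathcal{E}$ is an $RQ$-module (concentrated in degree zero) of injective dimension at most $1$. This is precisely the hypothesis of \cref{CohDetThm}, applied with $A=RQ$. Applying that theorem to the pair $(\cU,\cV)$ immediately yields that for any $X\in\cD(RQ)$, the object $X$ belongs to $\cU$ if and only if $H^i(X)[-i]\in\cU$ for every $i\in\bZ$, i.e. $\cU$ is determined on cohomology.

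No real obstacle is expected in the argument itself, since the heavy lifting has already been done in \cref{Cogen}, \cref{CohDetLem} and \cref{CohDetThm}. The only subtlety to verify explicitly is that the set $\mathcal{E}$ really sits inside the class of $RQ$-\emph{modules} (so that injective dimension is computed in the module category, as in \cref{CohDetLem}) rather than arbitrary complexes: this holds because $E(R/\p)$ is an $R$-module and $\widetilde{L}$ is an $RQ$-lattice, hence flat over $R$ on each vertex (\cref{LatComp}), so the derived tensor product $E(R/\p)\otimesL\widetilde{L}$ is represented by the underived vertexwise tensor product and is genuinely an $RQ$-module in degree zero. With that small bookkeeping remark in place, the corollary follows.
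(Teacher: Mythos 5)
Your proof follows the paper's argument step for step, invoking the same three lemmas in the same order: compact generation gives homotopic smashing, \cref{Cogen} cogenerates the coaisle, \cref{CohDetLem} supplies the injective-dimension bound, and \cref{CohDetThm} closes the argument. The one addition you make --- the closing ``bookkeeping'' remark --- is, however, not quite right. In \cref{Cogen} the parameter $L$ runs over indecomposable $\bK Q$-\emph{complexes}, that is, shifts $L'[m]$ of indecomposable $\bK Q$-modules with $m\in\bZ$ arbitrary. So $\widetilde{L}=\widetilde{L'}[m]$ is a \emph{shifted} lattice, and $E(R/\p)\otimesL\widetilde{L}\cong\bigl(E(R/\p)\otimes_R\widetilde{L'}\bigr)[m]$ is an $RQ$-module sitting in cohomological degree $-m$; since $m$ can be positive whenever $\cV$ is not contained in $\cD^{\geq 0}(RQ)$, these objects are shifted modules, not modules concentrated in degree zero. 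Hence $\mathcal{E}$ is not literally ``a set of $RQ$-modules'' and \cref{CohDetThm} as worded does not apply on the nose; your stated reason that each $E(R/\p)\otimesL\widetilde{L}$ lands in degree zero assumes tacitly that $L$ is a module.

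This does not invalidate the conclusion, but the correct way to resolve it is different from what you write: the proof of \cref{CohDetThm} only uses that each cogenerator has a two-term injective resolution and that the resulting long exact sequence is governed by the cohomology modules of $X$, and this argument transports verbatim if the cogenerator is shifted (the degree in which one reads off the condition simply moves along with the shift). Equivalently, the set $\{M_\alpha[m_\alpha]\}$ of shifted modules of injective dimension at most one satisfies the hypotheses of \cref{CohDetThm} after the trivial reindexing. The paper's own phrasing ``a set of $RQ$-modules'' carries the same imprecision, so you match the source in spirit; just be aware that the step ``whence $\cU={}^\perp\cV={}^\perp\mathcal{E}$'' and the identification of $\mathcal{E}$ with a set of unshifted modules are each being taken a little faster than they deserve.
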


As an application of the previous theorem we can provide a classification of the wide subcategories of $\mods(RQ)$, the category of finitely generated modules over $RQ$. This classification partially merges the one over commutative noetherian rings due to Takahashi in \cite{Tak} and the one over representation-finite hereditary algebras due to Ingalls and Thomas in \cite{IT}. This classification holds for rings $RQ$ with $R$ commutative noetherian regular and $Q$ Dynkin.

\begin{thm}\label{Wide}
    Let $R$ be a commutative noetherian regular ring and $Q$ a Dynkin quiver. Then, the following maps form a bijection
    \begin{equation*}\begin{gathered}
        \mathrm{Wide}(RQ)\longleftrightarrow\Hom_\mathrm{Pos}(\Spec(R),\mathrm{Wide}(\bK Q)) \\
        \cW\longmapsto\left(\p\mapsto\wide_{\bK Q}\leftangle L\in\ind(\bK Q)\mid R/\p\otimesL\widetilde{L}\in\cW\rightangle\right) \\
        \wide_{RQ}\leftangle R/\p\otimesL\widetilde{\sigma(\p)}\mid\p\in\Spec(R)\rightangle\longmapsfrom\,\sigma
    \end{gathered}\end{equation*}
\begin{proof}
    By regularity of $R$, we get that $\cD^c(RQ)=\cD^b(\mods(RQ))$ \cite[20.2.11]{CFH} and, by the (stable) telescope conjecture, every thick subcategory $\cT\subseteq\cD^c(RQ)$ is equal to $\loc_{RQ}\leftangle\cT\rightangle\cap\cD^c(RQ)$, and it is determined on cohomology by \cref{DetCoh}. Thus, by \cite[Theorem 2.5]{ZC}, there is a bijection
    \begin{equation*}\tag{$\ast\ast$}\label{ZCbij}\begin{gathered}
        \mathrm{Wide}(RQ)\longleftrightarrow\mathrm{Thick}(\cD^c(RQ)) \\
        \cW\longmapsto\leftcurly X\in\cD^c(\bK Q)\mid H^i(X)\in\cW\text{ for any }i\in\bZ\rightcurly \\
        \leftcurly H^0(X)\mid X\in\cT\rightcurly\longmapsfrom\,\cT
    \end{gathered}\end{equation*}
    Note that, by virtue of the telescope conjecture and the classification in \cref{Main}, the lattice $\mathrm{Thick}(\cD^c(RQ))$ is in bijection with $\Hom_\mathrm{Pos}(\Spec(R),\mathrm{Thick}(\cD^c(\bK Q)))$. Thus, specifying the above to $R=\bK$ (see also \cite[Theorem 5.1]{Bru}) and composing all the maps we obtain the bijection
    \begin{equation*}\begin{gathered}
        \mathrm{Wide}(RQ)\longleftrightarrow\Hom_\mathrm{Pos}(\Spec(R),\mathrm{Wide}(\bK Q)) \\
        \cW\longmapsto\left(\p\mapsto\leftcurly H^0(X)\mid X\in\cT_\cW\rightcurly\right) \\
        \leftcurly H^0(X)\mid X\in\cT_\sigma\rightcurly\longmapsfrom\,\sigma
    \end{gathered}\end{equation*}
    where $\cT_\cW=\thick_{\bK Q}\leftangle L\mid R/\p\otimes_R^{\bf L}\widetilde{L}\in\cW\rightangle$ and $\cT_\sigma=\thick_{RQ}\leftangle R/\p\otimes_R^{\bf L}\widetilde{\sigma(\p)}\mid\p\in\Spec(R)\rightangle$. Since both $\cT_\cW$ and $\cT_\sigma$ are cohomology determined and generated by compacts, by \cite[Lemma 5.1]{Hrb}, for any complex $X$ in there, $H^0(X)$ is a direct summand of an $n$-fold extension of finite direct sums of shifts of its generators. Thus, by the bijection (\ref{ZCbij}), the assignments coincides with the one in the statements.
\end{proof}
\end{thm}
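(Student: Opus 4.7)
The plan is to factor the desired bijection through the lattice of thick subcategories of $\cD^c(RQ)$, combining the Zhang--Cai correspondence \cite[Theorem 2.5]{ZC} between wide subcategories and certain thick subcategories, with \cref{Main} which classifies compactly generated t-structures. Regularity of $R$ is essential here, as it provides the identification $\cD^c(RQ) = \cD^b(\mods(RQ))$ via \cite[20.2.11]{CFH}, allowing one to speak of cohomology of compact objects and to restrict aisles from $\cD(RQ)$ to their compact parts.

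First I would verify the hypotheses of the Zhang--Cai correspondence: every thick subcategory $\cT \subseteq \cD^c(RQ)$ should coincide with $\loc_{RQ}\langle\cT\rangle \cap \cD^c(RQ)$, and should be determined on cohomology. The first property follows from the stable telescope conjecture implied by \cref{Main}, which ensures that compactly generated localizing subcategories recover $\cT$ upon intersection with $\cD^c(RQ)$; the second is an immediate consequence of \cref{DetCoh}, since a localizing subcategory is in particular the aisle of a stable t-structure. The Zhang--Cai bijection then sends a wide subcategory $\cW$ to $\{X \in \cD^c(RQ) \mid H^i(X) \in \cW \text{ for all } i \in \bZ\}$, with inverse $\cT \mapsto \{H^0(X) \mid X \in \cT\}$.

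Next I would combine \cref{Main} with the telescope conjecture to produce a bijection between $\mathrm{Thick}(\cD^c(RQ))$ and $\Hom_\mathrm{Pos}(\Spec(R), \mathrm{Thick}(\cD^c(\bK Q)))$ via compactly generated aisles. Specializing the Zhang--Cai correspondence to the case $R = \bK$ (which recovers the classical statement for hereditary finite-dimensional algebras, as in \cite[Theorem 5.1]{Bru}) gives $\mathrm{Thick}(\cD^c(\bK Q)) \leftrightarrow \mathrm{Wide}(\bK Q)$. Composing the three bijections yields the desired bijection between $\mathrm{Wide}(RQ)$ and $\Hom_\mathrm{Pos}(\Spec(R), \mathrm{Wide}(\bK Q))$, and order-preservation is clear from the construction.

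The main technical point will be identifying the resulting composite with the explicit formulas in the statement. Tracing through, a wide subcategory $\cW$ is sent at the prime $\p$ to $\{H^0(X) \mid X \in \cT_\cW\}$, where $\cT_\cW = \thick_{\bK Q}\langle L \mid R/\p \otimesL \widetilde{L} \in \cW\rangle$, and one must show this coincides with $\wide_{\bK Q}\langle L \mid R/\p \otimesL \widetilde{L} \in \cW\rangle$. Since $\cT_\cW$ is cohomology-determined and compactly generated, \cite[Lemma 5.1]{Hrb} applies and forces any $X \in \cT_\cW$ to have $H^0(X)$ expressible as a summand of a finite iterated extension of shifts of the generators; thus $H^0$ transports the generating set of $\cT_\cW$ to a generating set of the corresponding wide subcategory, and the inverse direction is symmetric.
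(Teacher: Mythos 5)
Your proposal follows essentially the same route as the paper's proof: factor through $\mathrm{Thick}(\cD^c(RQ))$ via the Zhang--Cai correspondence, verify its hypotheses using the stable telescope conjecture and \cref{DetCoh}, pass to $\Hom_\mathrm{Pos}(\Spec(R),\mathrm{Thick}(\cD^c(\bK Q)))$ via \cref{Main}, specialize at $R=\bK$, and finally invoke \cite[Lemma 5.1]{Hrb} to match the explicit generators. The argument and its constituent steps coincide with those in the paper.
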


Note that, by \cite[Proposition 2.3, Theorem 4.6]{ASe} and \cref{Wide}, all the wide subcategories of the category $\mods{RQ}$ will be of the form $\alpha\left(\mathsf{filtgen}\leftangle R/\p\otimesL\widetilde{\sigma(\p)}\rightangle\right)$ for some homomorphism $\sigma:\Spec(R)\to\mathrm{Wide}(\bK Q)$ (where \textsf{filtgen} denotes the closure under quotients and extensions, while for the definition of the map $\alpha$ we refer to \cref{ITRem}).

\begin{exmp}
    We can compute the lattice of wide subcategories of the path algebra $\mathbb{C}[[x]]A_2$. Recall that
    \[\Spec{\mathbb{C}[[x]]}=\begin{tikzcd} (x) \\  \\ (0) \arrow[uu]\end{tikzcd}\quad\text{and}\quad\mathrm{Wide}(\bK A_2)=\begin{tikzcd} & \mods(\bK A_2) & \\ \leftangle 0\to\bK \rightangle \arrow[ru] & \leftangle \bK\to\bK \rightangle \arrow[u] & \leftangle \bK\to0 \rightangle \arrow[lu] \\ & 0\to 0 \arrow[lu] \arrow[ru] \arrow[u] & \end{tikzcd}\]
    Then, denoting by $M\to M'$ a general $\mathbb{C}[[x]]A_2$-module and by $T$ and $T'$ the torsion $\mathbb{C}[[x]]$-modules, the lattice $\mathrm{Wide}(\mathbb{C}[[x]]A_2)$ is
    \[\begin{tikzcd} &  & M\to M' &  & \\ & T\to M \arrow[ru] & M\xrightarrow{f_{tor}}M' \arrow[u] & M\to T \arrow[lu] & \\ 0\to M \arrow[ru] & M\xrightarrow{\cong}M' \arrow[ru, bend left] & T\to T' \arrow[ru] \arrow[lu] \arrow[u] &  & M\to 0 \arrow[lu] \\ & 0\to T \arrow[ru] \arrow[lu] & T\xrightarrow{\cong}T' \arrow[u] \arrow[lu, bend left] & T\to 0 \arrow[ru] \arrow[lu] & \\ &  & 0\to 0 \arrow[ru] \arrow[lu] \arrow[u] &  & \end{tikzcd}\]
    where $f_{tor}$ is a $\mathbb{C}[[x]]$-linear homomorphism with torsion kernel and cokernel and in every vertex we consider the set of all the representations of that kind. For example, taking the homomorphism $\sigma\in\Hom_\mathrm{Pos}(\Spec(\mathbb{C}[[x]]),\mathrm{Wide}(\bK Q))$ such that $\sigma((0))=\leftangle\bK\to\bK\rightangle$ and $\sigma((x))=\mods(\bK A_2)$, one can check that $\alpha\left(\mathsf{filtgen}\leftangle\mathbb{C}[[x]]\to\mathbb{C}[[x]],\mods(\mathbb{C}A_2)\rightangle\right)=\leftcurly M\xrightarrow{f_{tor}}M'\mid M,M'\in\mods(\mathbb{C}[[x]])\rightcurly$.
\end{exmp}

\appendix
\section{t-structures of Dynkin algebras}\label{App}
This final section is devoted to the classification of the aisles of $\cD(\bK Q)$, for any field $\bK$ and Dynkin quiver $Q$. We show that the lattice $\Aisle(\cD(\bK Q))$ is isomorphic to the lattice $\mathrm{Filt}(\mathbf{Nc}(Q))$ of filtrations (i.e. non increasing sequences) of noncrossing partitions of $Q$, and so that it is independent of the field. In particular, this result specifies the classification in \cite{SR} to algebras of finite representation type.

\begin{lem}\label{ICE}
    Let $\cA$ be an hereditary abelian category and $\cS\subseteq\cD(\mathcal{A})$ a suspended subcategory. Then, for any map $f\in\Hom_\cA(H^n(\cS),H^{n+1}(\cS))$, it holds that $\ker f\in H^n(\cS)$ and $\coker f\in H^{n+1}(\cS)$. In particular,
    \[\ldots\supseteq H^n(\cS)\supseteq H^{n+1}(\cS)\supseteq\ldots\]
    is a filtration of subcategories closed under images, cokernels and extensions (ICE-closed).
\begin{proof}
    Let $f:A\to B$ with $A\in H^n(\cS)$ and $B\in H^{n+1}(\cS)$, then there is a triangle
    \[\begin{tikzcd} B[-n-1] \arrow[r] & \cone f[-n-1] \arrow[r] & A[-n] \arrow[r,"+"] & {} \end{tikzcd}\]
    where $\cone f[-n-1]\cong\ker f[-n]\oplus\coker f[-n-1]$. Since $\cS$ is closed under extensions and summands, $\cone f[-n-1]\in\cS$ and we get the statement. As for the last part, let $f:C\to D$ in $\Hom(H^n(\cS),H^n(\cS))$. Since $C$ lies also in $H^{n-1}(\cS)$, we have that $\coker f\in H^n(\cS)$ and $\ker f\in H^{n-1}(\cS)$, so $\im f\cong\coker(\ker f\hookrightarrow C)\in H^n(\cS)$. The closure under extensions of $H^n(\cS)$ follows from the one in $\cS$.
\end{proof}
\end{lem}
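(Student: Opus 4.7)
The plan is to exploit two key facts: (i) in a hereditary abelian category $\cA$, every complex in $\cD(\cA)$ is isomorphic to the direct sum of its cohomologies shifted appropriately (since $\Ext^{\geq 2}=0$ kills all gluing data), and (ii) a suspended subcategory is closed under extensions, positive shifts, and direct summands. Combining these gives the principle I will use throughout: if $X\in\cS$ and $H^n(X)=A$, then $A[-n]$ is a direct summand of $X$ and hence lies in $\cS$. In particular, $A\in H^n(\cS)$ iff $A[-n]\in\cS$.

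Now let $f\colon A\to B$ with $A\in H^n(\cS)$ and $B\in H^{n+1}(\cS)$. By the observation above, $A[-n]\in\cS$ and $B[-n-1]\in\cS$. Consider the shift $f[-n]\colon A[-n]\to B[-n]$ as a morphism in $\cD(\cA)$ and rotate its distinguished triangle so that it reads
\[
B[-n-1] \to W \to A[-n] \xrightarrow{f[-n]} B[-n].
\]
Here $W$ is an extension of $A[-n]$ by $B[-n-1]$, so by the extension-closure of $\cS$ we get $W\in\cS$. Computing cohomology along the triangle shows that the only non-zero cohomologies of $W$ are $H^n(W)=\ker f$ and $H^{n+1}(W)=\coker f$, and then the hereditary hypothesis forces the splitting $W\cong \ker(f)[-n]\oplus \coker(f)[-n-1]$. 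Applying closure under summands gives $\ker(f)[-n],\coker(f)[-n-1]\in\cS$, i.e.\ $\ker f\in H^n(\cS)$ and $\coker f\in H^{n+1}(\cS)$, which is the first claim.

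For the filtration statement, descending inclusion $H^{n+1}(\cS)\subseteq H^n(\cS)$ is immediate: if $C[-n-1]\in\cS$ then $C[-n]=C[-n-1][1]\in\cS$ by positive-shift closure. Closure of each $H^n(\cS)$ under extensions follows because a short exact sequence $0\to C'\to C\to C''\to 0$ in $\cA$ with $C',C''\in H^n(\cS)$ gives a distinguished triangle $C'[-n]\to C[-n]\to C''[-n]\to C'[-n+1]$, so $C[-n]\in\cS$. Closure under cokernels is a special case of the first part combined with the inclusion $H^{n+1}(\cS)\subseteq H^n(\cS)$, and closure under images then follows by writing $\im f=\coker(\ker f\hookrightarrow A)$.

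The only delicate point is the asymmetry between $\ker f$ and $\coker f$ in the decomposition of $W$; everything hinges on correctly identifying which of $\{-n,-n-1\}$ each summand lands in, and on the fact that $\cS$ is closed only under \emph{positive} shifts, so we must build $W$ from objects that are already at the right depth rather than try to shift anything downward.
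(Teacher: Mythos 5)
Your proof is correct and follows essentially the same route as the paper: you form the triangle $B[-n-1]\to W\to A[-n]$ (your $W$ is exactly $\cone f[-n-1]$), use the hereditary splitting $W\cong\ker f[-n]\oplus\coker f[-n-1]$, and conclude by closure under extensions and summands; the filtration claims are then obtained by the same bootstrapping via $H^{n+1}(\cS)\subseteq H^n(\cS)$. The only point where you are slightly terser than the paper is closure under images: you say it ``follows by writing $\im f=\coker(\ker f\hookrightarrow A)$,'' but since $\ker f$ need only lie in $H^{n-1}(\cS)$ (not $H^n(\cS)$), one must apply the first part of the lemma to the map $\ker f\hookrightarrow A$ (with source in $H^{n-1}(\cS)$ and target in $H^n(\cS)$) rather than invoke cokernel-closure within $H^n(\cS)$ — which is what the paper does and what you surely intend, so this is a matter of spelling it out rather than a gap.
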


\begin{rem}\label{ITRem}
    We want to note that some results from \cite{IT} hold also for ICE-closed subcategories (of artinian, Krull-Schmidt, hereditary abelian categories, e.g. $\mods(\bK Q)$). We will call an ICE-closed subcategory $\cH$ finitely generated if it contains a finite set of indecomposable object $\mathcal{I}$, such that $\cH\subseteq\gen(\mathcal{I})$, where $\gen$ denotes the closure under quotients and finite direct sums. Note that it is always the case in $\mods(\bK Q)$ when $Q$ is Dynkin. In particular, the following holds:
    \begin{itemize}
        \item \cite[Theorem 2.8]{IT} A finitely generated ICE-closed subcategory $\cH$ has a minimal generator, unique up to isomorphism, which is the direct sum of all its indecomposable split projectives;
        \item \cite[Proposition 2.12]{IT} For any ICE-closed subcategory $\cH$, the class
        \[\alpha(\cH)=\leftcurly M\in\cH\mid\text{for all }f\in\Hom(\cH,M),\,\ker f\in\cH\rightcurly\]
        is a wide subcategory. (See \cite[Proposition 2.2]{Sak} for a proof);
        \item \cite[Theorem 2.15]{IT} For any finitely generated ICE-closed subcategory $\cH$, let
        \begin{equation*}
            \alpha_s(\cH)=\leftcurly
            \begin{aligned}
                M\in\cH\mid&\text{ for all surjections } f:Z\to M \text{ with}\\
                &\ Z\in\cH\text{ split projective},\,\ker f\in\cH
            \end{aligned}\rightcurly
        \end{equation*}
        Then, $\alpha(\cH)=\alpha_s(\cH)$.
    
        \noindent To make the proof work for $\cH$ note that, at the end of their proof, $\ker g'$ is not just a quotient of $\ker g'h$ but, since $h$ is surjective, it is the image of $h_{\mid\ker g'h}:\ker g'h\to Y'$. Since both $\ker g'h$ and $Y'$ are in $\cH$, so does $\ker g'$;
        \item \cite[Theorem 2.16]{IT} For any finitely generated ICE-closed subcategory $\cH$, any split projective $U$ of $\cH$ is in $\alpha_s(\cH)=\alpha(\cH)$. In particular, $\cH\subseteq\gen(\bigoplus U_i)\subseteq\gen(\alpha(\cH))$.
    \end{itemize}
\end{rem}

\begin{thm}\label{FiltWide}
    For any field $\bK$ and Dynkin quiver $Q$, the following assignments form a bijection
    \begin{equation*}\begin{gathered}
        \mathrm{Susp}(\cD^c(\bK Q))\longleftrightarrow\mathrm{Filt}(\mathrm{Wide}(\bK Q)) \\
        \cS\longmapsto\alpha(H^n(\cS)) \\
        \leftcurly X\in\cD^c(\bK Q)\mid H^n(X)\in\gen(\cW_n)\cap\cW_{n-1}\rightcurly\longmapsfrom\,\left(\ldots\supseteq\cW_n\supseteq\cW_{n+1}\supseteq\ldots\right)
    \end{gathered}\end{equation*}
\begin{proof}
    \textit{The first assignment is well defined.} By \cref{ITRem}, for any $n\in\bZ$ the class $\alpha(H^n(\cS))$ is a wide subcategory and, by \cref{ICE}, $H^{n+1}(\cS)\subseteq\alpha(H^n(\cS))$, thus $\alpha(H^{n+1}(\cS))\subseteq\alpha(H^n(\cS))$.
    
    \noindent\textit{The second assignment is well defined.} Denote by $\cS_\cW$ the image of the assignment. It is clearly closed under summands and positive shifts, so it remains to show that it is closed under extensions. Let $X,Y\in\cS_\cW$ and consider a distinguished triangle
    \[\begin{tikzcd} X \arrow[r] & Z \arrow[r] & Y \arrow[r,"+"] & {} \end{tikzcd}\]
    It induces the exact sequence in cohomology
    \[\begin{tikzcd}
    H^{n-1}(Y) \arrow[r,"f"] & H^n(X) \arrow[r] & H^n(Z) \arrow[r] & H^n(Y) \arrow[r,"g"] & H^{n+1}(X)
    \end{tikzcd}\]
    from which we obtain the short exact sequence
    \[\begin{tikzcd} 0 \arrow[r] & \coker f \arrow[r]  & H^n(Z) \arrow[r] & \ker g \arrow[r] & 0 \end{tikzcd}\]
    Since $H^n(X)$ lies in $\gen(\cW_n)\cap\cW_{n-1}$, on one hand, $\coker f$ is in $\gen(\cW_n)$ as well, on the other hand, there exist $G\in\cW_{n-1}$ and a surjection $\pi:G\to H^{n-1}(Y)$ such that $f\circ\pi\in\Hom(\cW_{n-1},\cW_{n-1})$ and $\coker f=\coker(f\circ\pi_{n-1})$ is in $\cW_{n-1}$. Moreover, we will prove later that $\cW_n\subseteq\alpha(\gen(\cW_n)\cap\cW_{n-1})$ thus $\ker g$ lies in $\gen(\cW_n)\cap\cW_{n-1}$. Since, by \cite[Proposition 2.13]{IT}, $\gen(\cW_n)$ is closed under extensions, it follows that $H^n(Z)$ lies in $\gen(\cW_n)\cap\cW_{n-1}$.
    
    \noindent\textit{The identity on the left.} Let $\cS\subseteq\cD^c(\bK Q)$ be a suspended subcategory and
    \[\cS_\alpha:=\leftcurly X\in\cD^c(\bK Q)\mid H^n(X)\in\gen(\alpha(H^n(\cS)))\cap\alpha(H^{n-1}(\cS))\rightcurly\]
    We will show that $\cS=\cS_\alpha$. For any $n\in\bZ$ we have that, by \cref{ITRem}, $H^n(\cS)\subseteq\gen(\alpha(H^n(\cS)))$ and, by \cref{ICE}, $H^n(\cS)\subseteq\alpha(H^{n-1}(\cS))$. Thus, $\cS\subseteq\cS_\alpha$. For the other inclusion, let $X\in\cS_\alpha$, then for any $n\in\bZ$ there is $G_n\in\alpha(H^n(\cS))$ and a surjective map $\pi:G_n\twoheadrightarrow H^n(X)$. In particular, by definition of $\alpha$, $\ker\pi\in H^n(\cS)\subseteq\alpha(H^{n-1}(\cS))$. Consider the distinguished triangle
    \[\begin{tikzcd} {G_n[-n]} \arrow[r,"{\pi[-n]}"] & {H^n(X)[-n]} \arrow[r] & {\ker\pi[-n+1]} \arrow[r,"+"] & {} \end{tikzcd}\]
    Since $\cS$ is closed under extensions, $H^n(X)[-n]$ lies in $\cS$ for any $n\in\bZ$, so $X\cong\bigoplus H^n(X)[-n]$ does.
    
    \noindent\textit{The identity on the right.} It suffices to prove that for any filtration of wide subcategories
    \[\ldots\supseteq\cW_n\supseteq\cW_{n+1}\supseteq\ldots\]
    we have the equality $\cW_n=\alpha\left(\gen(\cW_n)\cap\cW_{n-1}\right)$. For the inclusion from left to right, we have that $\cW_n\subseteq\gen(\cW_n)\cap\cW_{n-1}$ and for any $W\in\cW_n$ and $f:A\to W$ with $A\in\gen(\cW_n)\cap\cW_{n-1}$, there is $G\in\cW_n$ and a surjection $\pi:G\to A$ such that, by surjectivity of $\pi$, $\ker f=\im\left(\ker(f\circ\pi)\xrightarrow{\pi}A\right)$. In particular, $\ker f$ is both a quotient of $\ker(f\circ\pi)$, so it is in $\gen(\cW_n)$, and the image of a morphism in $\cW_{n-1}$, so it is in $\cW_{n-1}$. For the other inclusion, let $M\in\alpha\left(\gen(\cW_n)\cap\cW_{n-1}\right)$, in particular there are $G'\in\cW_n$ and a surjection $\pi':G'\to M$ such that, by definition of $\alpha$, $\ker\pi'\in\gen(\cW_n)\cap\cW_{n-1}$. Thus, there exist $G''\in\cW_{n}$ and a surjection $\pi'':G''\to\ker\pi'$ such that $M\cong\coker\left(\ker\pi'\xrightarrow{i}G'\right)\cong\coker\left(G''\xrightarrow{i\circ\pi''}G'\right)$ and so it lies in $\cW_n$.
\end{proof}
\end{thm}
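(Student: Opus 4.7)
The plan is to verify the four usual steps required for such a bijection: well-definedness of each assignment, and the two identities. Throughout I will use the hereditary hypothesis via \cref{ICE} to move freely between objects $X\in\cD^c(\bK Q)$ and the decomposition $X\cong\bigoplus_n H^n(X)[-n]$, and the properties of the operator $\alpha$ listed in \cref{ITRem}.

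First I would check that the forward map is well defined. By \cref{ITRem}, $\alpha$ sends an ICE-closed subcategory of $\mods(\bK Q)$ to a wide subcategory, so it suffices to show $H^{n+1}(\cS)\subseteq\alpha(H^n(\cS))$, which then gives $\alpha(H^{n+1}(\cS))\subseteq\alpha(H^n(\cS))$. Since $\cS$ is suspended, $H^{n+1}(\cS)\subseteq H^n(\cS)$; and if $M\in H^{n+1}(\cS)$ and $f:A\to M$ with $A\in H^n(\cS)$, \cref{ICE} applied to $f$ (viewed as a degree $+1$ morphism after shifting) gives $\ker f\in H^n(\cS)$, so $M\in\alpha(H^n(\cS))$ by definition.

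Next I would handle the backward map. Closure of $\cS_\cW$ under summands and positive shifts is immediate (shifting $X[1]$ sends $H^n$ to $H^{n+1}$, and $\cW_{n+1}\subseteq\gen(\cW_{n+1})\cap\cW_n$ because $\cW_{n+1}\subseteq\cW_n$ and wide subcategories are contained in their own $\gen$). For extensions, given a triangle $X\to Z\to Y\to X[1]$ with $X,Y\in\cS_\cW$, the long exact sequence yields a short exact sequence $0\to\coker f\to H^n(Z)\to\ker g\to0$ with $f:H^{n-1}(Y)\to H^n(X)$ and $g:H^n(Y)\to H^{n+1}(X)$. Since $H^n(X)\in\cW_{n-1}$ and $H^{n-1}(Y)\in\gen(\cW_{n-1})$, choosing a surjection $G\twoheadrightarrow H^{n-1}(Y)$ with $G\in\cW_{n-1}$ rewrites $\coker f$ as the cokernel of a morphism in $\cW_{n-1}$, landing in $\gen(\cW_n)\cap\cW_{n-1}$. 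For $\ker g$, I would \emph{first} establish the identity $\cW_n=\alpha(\gen(\cW_n)\cap\cW_{n-1})$ (the harder direction of the right identity below); applying it to $H^{n+1}(X)\in\cW_n$ and the morphism $g$ with domain in $\gen(\cW_n)\cap\cW_{n-1}$ gives $\ker g\in\gen(\cW_n)\cap\cW_{n-1}$. Extension-closure of both $\cW_{n-1}$ (wideness) and $\gen(\cW_n)$ (\cite[Proposition 2.13]{IT}) then places $H^n(Z)$ in the prescribed class.

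For the left identity $\cS=\cS_\alpha$, the inclusion $\cS\subseteq\cS_\alpha$ follows from $H^n(\cS)\subseteq\gen(\alpha(H^n(\cS)))$ (by \cref{ITRem}, since $\alpha(\cH)$ generates $\cH$ inside $\gen$) and $H^n(\cS)\subseteq\alpha(H^{n-1}(\cS))$ (as in Step 1). For the converse, given $X\in\cS_\alpha$ I pick a surjection $\pi:G_n\twoheadrightarrow H^n(X)$ with $G_n\in\alpha(H^n(\cS))$; the kernel is in $H^n(\cS)$ because $H^n(X)\in\alpha(H^{n-1}(\cS))$ forces $\ker\pi\in H^{n-1}(\cS)$... (actually by the $\alpha$-condition for $H^n(X)$ applied to $\pi$, one gets $\ker\pi\in H^n(\cS)$). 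Then the triangle $G_n[-n]\to H^n(X)[-n]\to\ker\pi[-n+1]\to$ sits in $\cS$, and assembling across $n$ using the hereditary splitting $X\cong\bigoplus H^n(X)[-n]$ yields $X\in\cS$.

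The right identity $\cW_n=\alpha(\gen(\cW_n)\cap\cW_{n-1})$ I expect to be the \textbf{main obstacle}. The inclusion $\supseteq$ is obtained by factoring a surjection $\pi':G'\twoheadrightarrow M$ with $G'\in\cW_n$ through the $\alpha$-condition: $\ker\pi'\in\gen(\cW_n)\cap\cW_{n-1}$, then further covered by $G''\in\cW_n$, exhibiting $M$ as the cokernel in $\cW_n$. The inclusion $\subseteq$, needed also for extension-closure above, requires, given $W\in\cW_n$ and $f:A\to W$ with $A\in\gen(\cW_n)\cap\cW_{n-1}$, to show $\ker f\in\gen(\cW_n)\cap\cW_{n-1}$; the trick is to pick $G\in\cW_n$ surjecting onto $A$, observe that $\ker f$ is simultaneously a quotient of $\ker(f\circ\pi)\in\cW_n$ (hence in $\gen(\cW_n)$) and the image of a morphism inside $\cW_{n-1}$ (hence in $\cW_{n-1}$ by wideness). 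Handling the two conditions $\gen(\cW_n)$ and $\cW_{n-1}$ simultaneously is the delicate point, since neither alone would suffice.
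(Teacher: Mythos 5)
Your proposal is correct and follows essentially the same four-step strategy as the paper's proof, with the same decompositions, the same use of \cref{ICE} and \cref{ITRem}, and the same observation that the backward map's extension-closure rests on the right-hand identity $\cW_n=\alpha(\gen(\cW_n)\cap\cW_{n-1})$. The only minor discrepancy is the parenthetical hedge in the left-identity step — the $\alpha$-condition for $H^n(X)\in\alpha(H^{n-1}(\cS))$ applied to $\pi$ gives $\ker\pi\in H^{n-1}(\cS)$ (not $H^n(\cS)$); this is exactly what the triangle argument needs, and the paper's wording contains the same slip, so there is no substantive gap.
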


\begin{cor}\label{FiltNc}
    For any field $\bK$ and Dynkin quiver $Q$, there is a bijection
    \[\Aisle(\cD(\bK Q))\longleftrightarrow\mathrm{Filt}(\mathbf{Nc}(Q))\]
\begin{proof}
    Note that by \cite[Theorem 4.5 (i)]{SP}, there is a bijection between $\Aisle(\cD(\bK Q))$ and $\mathrm{Susp}(\cD^c(\bK Q))$. Then, by \cref{FiltWide}, this further specifies to a bijection with $\mathrm{Filt}(\mathrm{Wide}(\bK Q))$. Finally, by \cite[Theorem 1.1]{IT}, the lattice of wide subcategories of $\mods(\bK Q)$ is isomorphic to the lattice of noncrossing partitions of $Q$, thus we get the result.
\end{proof}
\end{cor}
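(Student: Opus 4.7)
The plan is to chain together three bijections, the middle one being the nontrivial Theorem \ref{FiltWide} that has just been proved in the appendix. First, I would establish an order-preserving bijection
\[
\Aisle(\cD(\bK Q)) \longleftrightarrow \mathrm{Susp}(\cD^c(\bK Q))
\]
by sending an aisle $\cU$ to the suspended subcategory $\cU \cap \cD^c(\bK Q)$ of compacts, and a suspended subcategory $\cS \subseteq \cD^c(\bK Q)$ to $\aisle_{\bK Q}\langle\cS\rangle$. The key input is that $\cD(\bK Q)$ is pure-semisimple (cf.\ \cref{GenRem}), so every aisle is determined by, and generated by, its compact part; this is exactly the content cited from \cite{SP} for finite-dimensional algebras of finite representation type. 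I would verify that the two assignments are mutually inverse using \cref{GenRem}: given $\cS\subseteq\cD^c(\bK Q)$ suspended, every object of $\aisle_{\bK Q}\langle\cS\rangle$ is built via extensions, positive shifts, summands and coproducts from $\cS$, and intersecting back with compacts recovers $\cS$ because $\bK Q$ is hereditary of finite representation type (so coproducts of indecomposables decompose).

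Second, I would simply invoke \cref{FiltWide} to obtain
\[
\mathrm{Susp}(\cD^c(\bK Q)) \longleftrightarrow \mathrm{Filt}(\mathrm{Wide}(\bK Q)),
\]
which gives an order-preserving bijection between suspended subcategories of $\cD^c(\bK Q)$ and filtrations of wide subcategories of $\mods(\bK Q)$.

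Third, I would apply Ingalls--Thomas \cite[Theorem 1.1]{IT}, which provides a lattice isomorphism $\mathrm{Wide}(\bK Q) \cong \mathbf{Nc}(Q)$ between wide subcategories of $\mods(\bK Q)$ and noncrossing partitions of $Q$. Passing this isomorphism through to filtrations is purely formal, since $\mathrm{Filt}(-)$ is a functor on posets: a non-increasing sequence of wide subcategories corresponds bijectively to a non-increasing sequence of noncrossing partitions. Composing the three bijections yields the desired isomorphism $\Aisle(\cD(\bK Q)) \longleftrightarrow \mathrm{Filt}(\mathbf{Nc}(Q))$.

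The main obstacle, assuming \cref{FiltWide} and Ingalls--Thomas as black boxes, lies in the first step: one must be careful that the assignment $\cU \mapsto \cU \cap \cD^c(\bK Q)$ is genuinely inverse to generation, and in particular that every aisle in $\cD(\bK Q)$ is compactly generated. This is where pure-semisimplicity of $\cD(\bK Q)$ (available because $\bK Q$ is hereditary of finite representation type) is essential, and also where the cited result of \cite{SP} does the heavy lifting. Once that correspondence is set up, the rest is a clean composition of known equivalences of posets.
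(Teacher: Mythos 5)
Your proposal follows exactly the same three-step chain as the paper: $\Aisle(\cD(\bK Q))\leftrightarrow\mathrm{Susp}(\cD^c(\bK Q))$ via \cite[Theorem 4.5 (i)]{SP}, then \cref{FiltWide}, then Ingalls--Thomas \cite[Theorem 1.1]{IT}, with the only difference being that you spell out what the bijection from \cite{SP} looks like. This matches the paper's proof.
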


\begin{exmp}
    The aisle of $\cD(\bK A_3)$ corresponding to the filtration
    \[\cW_1=0\subseteq\add(P_1)\subseteq\add(P_2,P_1,E_1)\subseteq\mods(\bK A_3)=\cW_{-2}\]
    is the one generated by the modules in red in the figure below
    \begin{figure}[ht]
        \centering
        \includegraphics[width=15cm, height=2.5cm]{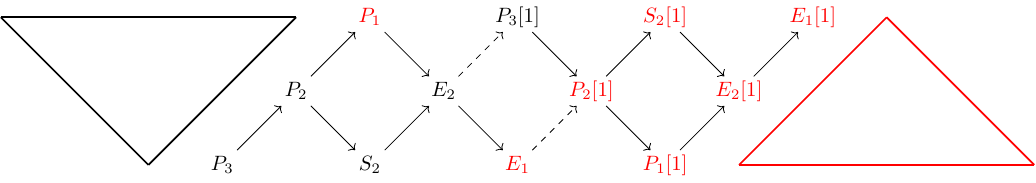}
    \end{figure}
\end{exmp}

\bibliographystyle{amsalpha}
\bibliography{references}
\end{document}